\numberwithin{equation}{section}
\numberwithin{figure}{section}
\theoremstyle{plain}
\newtheorem{thm}{\protect\theoremname}[section]
  \theoremstyle{plain}
  \newtheorem{lem}[thm]{\protect\lemmaname}
  \theoremstyle{plain}
  \newtheorem{prop}[thm]{\protect\propositionname}
  \theoremstyle{definition}
  \newtheorem{problem}[thm]{\protect\problemname}
  \theoremstyle{plain}
  \newtheorem{cor}[thm]{\protect\corollaryname}
  \theoremstyle{definition}
  \newtheorem{example}[thm]{\protect\examplename}
  \theoremstyle{remark}
  \newtheorem*{rem*}{\protect\remarkname}
  \theoremstyle{definition}
  \newtheorem{defn}[thm]{\protect\definitionname}
  \providecommand{\corollaryname}{Corollary}
  \providecommand{\definitionname}{Definition}
  \providecommand{\examplename}{Example}
  \providecommand{\lemmaname}{Lemma}
  \providecommand{\problemname}{Problem}
  \providecommand{\propositionname}{Proposition}
  \providecommand{\remarkname}{Remark}
\providecommand{\theoremname}{Theorem}
\begin{document}

\title{analytic aspects of the bi-free partial $R$-transform}

\author{hao-wei huang and jiun-chau wang}
\begin{abstract}
It is shown that the bi-freely infinitely divisible laws, and only
these, can be used to approximate the distributions of sums of
identically distributed bi-free pairs of random variables from commuting faces.
Furthermore, the necessary and sufficient conditions for this approximation
are found. Bi-free convolution semigroups of measures and their L\'{e}vy-Khintchine representations are also studied here from an infinitesimal point of view. The proofs relies on the harmonic analysis
machinery we developed for integral transforms of two variables, without reference to the combinatorics of moments and bi-free cumulants.\end{abstract}

\date{May 30, 2015; revised on April 28, 2016}

\address{Department of Mathematics and Statistics, Queen's University, Kingston, Ontario K7L 3N6, Canada}

\curraddr{Department of Applied Mathematics, National Sun Yat-sen University, No. 70, Lienhai Road, Kaohsiung 80424, Taiwan, R.O.C.}

\email{hwhuang@math.nsysu.edu.tw}

\address{Department of Mathematics and Statistics, University of Saskatchewan,
Saskatoon, Saskatchewan S7N 5E6, Canada}

\email{jcwang@math.usask.ca}

\subjclass[2000]{46L54}

\keywords{Bi-free $R$-transform; infinitely divisible law; L\'{e}vy-Khintchine formula.}

\maketitle

\section{introduction}
The purpose of this paper is to develop a harmonic analysis approach to the partial $R$-transform and infinitely divisible laws in Voiculescu's bi-free probability theory. 

Following \cite{V14,V15}, given a two-faced pair $(a,b)$ of left variable $a$ and right variable $b$ in a $C^{*}$-probability space $(\mathcal{A},\varphi)$, its \emph{bi-free partial $R$-transform} $R_{(a,b)}$ is defined as the generating series \[R_{(a,b)}(z,w)=\sum_{m,n\geq 0}R_{m,n}(a,b)z^{m}w^{n}\] of the ordered bi-free cumulants $\{R_{m,n}(a,b):m,n \geq 0\}$ for the pair $(a,b)$. As shown by Voiculescu, this partial $R$-transform actually converges absolutely to the following holomorphic function near the point $(0,0)$ in $\mathbb{C}^2$: \begin{equation}\label{eq:1.1} R_{(a,b)}(z,w)=1+zR_{a}(z)+wR_{b}(w)-zw/G_{(a,b)}(1/z+R_{a}(z),1/w+R_{b}(w)),\end{equation} where $R_{a}$ and $R_{b}$ are respectively the usual $R$-transforms of $a$ and $b$, and the function $G_{(a,b)}$ is given by \[G_{(a,b)}(z,w)=\varphi((zI-a)^{-1}(wI-b)^{-1}).\] Moreover, if two two-faced pairs $(a_1,b_1)$ and $(a_2,b_2)$ are \emph{bi-free} as in \cite{V14}, then one has \[R_{(a_1+a_2,b_1+b_2)}(z,w)=R_{(a_1,b_1)}(z,w)+R_{(a_2,b_2)}(z,w)\] for $(z,w)$ near $(0,0)$. 

Since their introduction in 2013, the bi-free $R$-transform and bi-free cumulants have been the subject of several investigations \cite{CNS, GHM, MN, Skoufranis14} from the combinatorial perspective. (We also refer the reader to the original papers \cite{V14, V15, V15ST, V15EXT} for the basics of bi-free probability and to \cite{CNS2, FW, Skoufranis15} for other developments of this theory.) Here in this paper, we would like to contribute to the study of the bi-free $R$-transform by initiating a new direction which is solely based on the harmonic analysis of integral transforms in two variables. Of course, to accommodate objects like measures or integral transforms, we naturally confine ourselves into the case where all left variables \emph{commute} with all right variables. Thus, the distribution for a two-faced pair of commuting selfadjoint variables is the composition of the expectation functional with the joint spectral measure of these variables, which is a compactly supported Borel probability measure on $\mathbb{R}^2$. In particular, the map $G_{(a,b)}$ now becomes the Cauchy transform of the distribution of the pair $(a,b)$. Furthermore, according to the results in \cite{V14}, given two compactly supported probabilities $\mu_1$ and $\mu_2$ on $\mathbb{R}^2$, one can find two bi-free pairs $(a_1,b_1)$ and $(a_2,b_2)$ of commuting left and right variables such that the law of $(a_j,b_j)$ is the measure $\mu_j$ ($j=1,2$) and the \emph{bi-free convolution} $\mu_1\boxplus\boxplus \mu_2$ of these measures is the distribution of the sum $(a_1,b_1)+(a_2,b_2)=(a_1+a_2,b_1+b_2)$. To reiterate, we now have $R_{\mu_1\boxplus\boxplus \mu_2}=R_{\mu_1}+R_{\mu_2}$ near the point $(0,0)$ in this case. 
  
Under such a framework, we are able to develop a satisfactory theory for bi-free harmonic analysis of probability measures on the plane, and we show that the classical limit theory for infinitely divisible laws, due to L\'{e}vy and Khintchine, has a perfect bi-free analogue. 

The organization and the description of the results in this paper are as follows. We first begin with continuity results for the two-dimensional Cauchy transform in Section 2. Then we take \eqref{eq:1.1} as the new definition for the bi-free $R$-transform of a planar measure and prove similar continuity results for this transform. In Section 3, we are set to investigate the convergence properties of the scaled bi-free $R$-transforms $f_n=k_nR_{\mu_n}$, where $k_n\in\mathbb{N}$ and $\mu_n$ is a probability law on $\mathbb{R}^2$. We find the necessary and sufficient conditions for the pointwise convergence of $\{f_n\}_{n=1}^{\infty}$, and show that the pointwise limit $f=\lim_{n\rightarrow\infty}f_n$ will be a bi-free $R$-transform for some probability law $\nu$ if the limit $f$ should exist in a certain domain of $\mathbb{C}^2$. The class $\mathcal{BID}$ of \emph{bi-freely infinitely divisible laws} is then introduced as the family of all such limit laws $\nu$. Examples are provided and include bi-free analogues of Gaussian and Poisson laws. Other properties of this class such as compound Poisson approximation and a convolution semigroup embedding property are also studied here in Section 3. When applying our results to bi-free convolution of compactly supported measures, we obtain the criteria for the weak convergence of the measures $\mu_{n}\boxplus \boxplus \mu_n \boxplus \boxplus\cdots \boxplus \boxplus\mu_n$ ($k_n$ times) and the characterization of their limit (namely, being infinitely divisible). Interestingly enough, our limit theorems do not depend on whether the function $f_n$ is a bi-free $R$-transform or not; that is, the existence of the bi-free convolution for measures with unbounded support does not play a role here. We do, however, show that the binary operation $\boxplus \boxplus$ can be extended from compactly supported measures to the class $\mathcal{BID}$. Finally, in Section 4, the bi-free $R$-transform $R_{\nu}$ of any law $\nu\in \mathcal{BID}$ is studied from a dynamical point of view. We show that $R_{\nu}$ arises as the time derivative of the Cauchy transforms corresponding to the bi-free convolution semigroup $\{\nu_t\}_{t\geq 0}$ generated by the law $\nu$. We then obtain a canonical integral representation for $R_{\nu}$, called the \emph{bi-free L\'{e}vy-Khintchine formula}, from this aspect of the bi-free $R$-transform. 

\subsection*{Acknowledgements}
The second named author would like to express his gratitude to Professor Joel Zinn for stimulating conversations that led him to Theorem 3.6 and Proposition 3.11 in this paper. The first-named author was supported through a Coleman Postdoctoral Fellowship at Queen's University and a startup grant for new faculty at National Sun Yat-sen University, and the second-named author was supported by the NSERC Canada Discovery Grant RGPIN-402601. Both authors would like to thank the anonymous referee for his/her review and helpful comments.

\section{continuity theorems}

\subsection{Two-dimensional Cauchy transforms }

We start with some continuity results for the Cauchy transform of
two variables. These results are not new and must be known already by harmonic analysts. Since we did not find an appropriate reference for them, we provide their proofs here for the sake of completeness. 

Denote by $\mathbb{C}^{+}=\left\{ z\in\mathbb{C}:\Im z>0\right\} $
the complex upper half-plane and by $\mathbb{C}^{-}$ the lower one.
For a (positive) planar Borel measure $\mu$ satisfying the growth
condition
\begin{equation}
\int_{\mathbb{R}^{2}}\frac{1}{\sqrt{1+s^{2}}\sqrt{1+t^{2}}}\, d\mu(s,t)<\infty,\label{eq: 2.1}
\end{equation}
the domain of definition for its \emph{Cauchy transform}
\[
G_{\mu}(z,w)=\int_{\mathbb{R}^{2}}\frac{1}{(z-s)(w-t)}\, d\mu(s,t)
\]
is the set $(\mathbb{C}\setminus\mathbb{R})^{2}=\{(z,w)\in\mathbb{C}^{2}:z,w\notin\mathbb{R}\}$
consisting of four connected components: $\mathbb{C}^{+}\times\mathbb{C}^{+}$,
$\mathbb{C}^{-}\times\mathbb{C}^{+}$, $\mathbb{C}^{-}\times\mathbb{C}^{-}$,
and $\mathbb{C}^{+}\times\mathbb{C}^{-}$. The function $G_{\mu}$
is holomorphic and satisfies the symmetry
\[
G_{\mu}(z,w)=\overline{G_{\mu}(\overline{z},\overline{w})},\qquad(z,w)\in(\mathbb{C}\setminus\mathbb{R})^{2}.
\]

Assume in addition that the Borel measure $\mu$ is finite on all compact subsets of $\mathbb{R}^2$, so that $\mu$ is a $\sigma$-finite Radon measure. Since the kernels \[\frac{1}{\pi^2}\frac{y^2}{(x^2+y^2)(u^2+y^2)},\quad y>0,\] form an approximate identity in the space $L^{1}(\mathbb{R}^2)$ with respect to the Lebesgue measure $dxdu$ on $\mathbb{R}^2$, a standard truncation argument and Fubini's theorem imply that for any compactly supported continuous function $\varphi$ on $\mathbb{R}^2$, one has the following inversion formula which recovers the measure $\mu$ as a positive linear functional acting on such $\varphi$: \[\int_{\mathbb{R}^2}\varphi\,d\mu = \lim_{y\rightarrow 0^{+}}\frac{1}{\pi^{2}}\int_{\mathbb{R}^2}\varphi(x,u)\left[\underbrace{\int_{\mathbb{R}^{2}}\frac{y}{(x-s)^{2}+y^{2}}\,\frac{y}{(u-t)^{2}+y^{2}}\, d\mu(s,t)}_{=f(x,u,y)}\right]\, dxdu.\]
Hence the Cauchy transform $G_{\mu}$ determines the underlying measure
$\mu$ uniquely. Indeed, take the imaginary part, we have
\[
\Im\left[\frac{G_{\mu}(x+iy,u+iy)-G_{\mu}(x+iy,u-iy)}{2i}\right]=f(x,u,y).
\]
Apparently, the definition of $G_{\mu}$ and the above properties
can be extended to any Borel signed measure $\mu$ whose total variation $|\mu|$
satisfies the growth condition (\ref{eq: 2.1}) and $|\mu|(K)<\infty$ for all compact $K\subset \mathbb{R}^2$.

Let $\pi_{1}$ and $\pi_{2}$ be the projections defined by $\pi_{1}(s,t)=s$
and $\pi_{2}(s,t)=t$ for $(s,t)\in\mathbb{R}^{2}$. For a Borel measure
$\mu$ on $\mathbb{R}^{2}$, its (principal) \emph{marginal} \emph{laws}
$\mu^{(j)}$ ($j=1,2$) are defined as $\mu^{(j)}=\mu\circ\pi_{j}^{-1}$,
the push-forward of $\mu$ by these projections. Denoting $\alpha_{z}=\sqrt{1+(\Re z/\Im z)^{2}}$
for any complex number $z\notin\mathbb{R}$, we say that $z\rightarrow\infty$
\emph{non-tangentially} (and write $z\rightarrow_{\leftslice}\infty$
to indicate this) if $\left|z\right|\rightarrow\infty$ and the quantity
$\alpha_{z}$ remains bounded. The notation $z,w\rightarrow_{\leftslice}\infty$ means that both $z$ and $w$ tend to infinity non-tangentially. Our first result says that the one-dimensional
Cauchy transform
\[
G_{\mu^{(j)}}(z)=\int_{\mathbb{R}}\frac{1}{z-x}\, d\mu^{(j)}(x),\qquad z\notin \mathbb{R},
\]
of the marginal law $\mu^{(j)}$ can be recovered from $G_{\mu}$ as a non-tangential
limit. Recall that a family $\mathcal{F}$ of finite
Borel signed measures on $\mathbb{R}^{2}$ is said to be \emph{tight}
if
\[
\lim_{m\rightarrow\infty}\sup_{\mu\in\mathcal{F}}\left|\mu\right|(\mathbb{R}^{2}\setminus K_{m})=0,
\]
where $K_{m}=\left\{ (s,t):\left|s\right|\leq m,\left|t\right|\leq m\right\}$. The tightness for measures supported on $\mathbb{R}$ is defined
analogously. Since \[\mathbb{R}^{2}\setminus K_{m}=\left\{(s,t):\left|s\right|> m,t\in \mathbb{R}\right\}\cup \left\{(s,t):s\in \mathbb{R},\left|t\right|> m\right\},\] the finite subadditivity of total variation measure shows that a family $\mathcal{F}$ of Borel signed measures
on $\mathbb{R}^{2}$ is tight if and only if the collection $\left\{ |\mu|^{(1)},|\mu|^{(2)}:\mu\in\mathcal{F}\right\} $
of the marginal laws forms a tight family of Borel signed measures on
$\mathbb{R}$.
\begin{lem}
Let $\mathcal{F}$ be a tight family of probability measures on $\mathbb{R}^{2}$.
Then for each $(z,w)\in(\mathbb{C}\setminus\mathbb{R})^{2}$, the
limits
\[
\begin{cases}
\lim_{\lambda\rightarrow_{\leftslice}\infty}\lambda G_{\mu}(z,\lambda)=G_{\mu^{(1)}}(z)\\
\lim_{\lambda\rightarrow_{\leftslice}\infty}\lambda G_{\mu}(\lambda,w)=G_{\mu^{(2)}}(w)
\end{cases}
\]
hold uniformly for $\mu\in\mathcal{F}$. Moreover, these two limits
are also uniform for $(z,w)$ in the union $\left\{ (z,w):\left|\Im z\right|\geq\varepsilon>0,\left|\Im w\right|\geq\delta>0\right\} $
of polyhalfplanes.\end{lem}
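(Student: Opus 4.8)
The plan is to reduce both assertions to a single uniform estimate on the integrand. I would first record the algebraic identity $\lambda/(\lambda - t) - 1 = t/(\lambda - t)$, so that, since $\mu$ is a probability measure and $G_{\mu^{(1)}}(z) = \int_{\mathbb{R}^2}(z-s)^{-1}\,d\mu(s,t)$, the error takes the form
\begin{equation*}
\lambda G_{\mu}(z,\lambda) - G_{\mu^{(1)}}(z) = \int_{\mathbb{R}^2}\frac{1}{z-s}\cdot\frac{t}{\lambda - t}\,d\mu(s,t).
\end{equation*}
Because $s,t$ are real while $z,\lambda\notin\mathbb{R}$, one has the pointwise bounds $|z-s|^{-1}\leq|\Im z|^{-1}$ and $|\lambda - t|\geq|\Im\lambda|$; the first already delivers the claimed uniformity in $z$ on $\{|\Im z|\geq\varepsilon\}$, via $|\Im z|^{-1}\leq\varepsilon^{-1}$.

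The crux is to control $t/(\lambda - t)$ uniformly in $t\in\mathbb{R}$, not merely for bounded $t$, so that the large-$t$ tail can be absorbed by tightness alone. Here the non-tangential hypothesis enters: from $|t|\leq|\lambda - t|+|\lambda|$ together with $|\lambda - t|\geq|\Im\lambda|$ I obtain
\begin{equation*}
\left|\frac{t}{\lambda - t}\right|\leq 1+\frac{|\lambda|}{|\Im\lambda|}=1+\alpha_{\lambda},
\end{equation*}
which stays bounded by some $1+M$ along any non-tangential approach. I would then split the integral at a threshold $|t|=R$. On the tail $\{|t|>R\}$ this uniform bound yields a contribution of at most $\varepsilon^{-1}(1+M)\,\mu^{(2)}(\{|t|>R\})$, and the tightness of $\mathcal{F}$---equivalently, of its marginals, as noted just before the statement---makes this arbitrarily small, uniformly in $\mu\in\mathcal{F}$, once $R$ is taken large enough.

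With such an $R$ fixed, on the core region $\{|t|\leq R\}$ I would instead use $|t/(\lambda - t)|\leq|t|/|\Im\lambda|\leq R/|\Im\lambda|$; since $|\Im\lambda|=|\lambda|/\alpha_{\lambda}\geq|\lambda|/M$, this contributes at most $\varepsilon^{-1}RM/|\lambda|$, which tends to $0$ as $|\lambda|\to\infty$. Adding the two regions gives the first limit, uniformly for $\mu\in\mathcal{F}$ and for $z$ in $\{|\Im z|\geq\varepsilon\}$; the second limit follows by interchanging the two coordinates and using $|\Im w|^{-1}\leq\delta^{-1}$. I expect the only genuine subtlety to be the uniform-in-$t$ bound on $t/(\lambda - t)$: without the non-tangential restriction the quantity $\alpha_{\lambda}$ is unbounded, this factor cannot be controlled on the tail, and the interchange of limit and integral breaks down.
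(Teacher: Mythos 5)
Your proposal is correct and follows essentially the same route as the paper: the same identity $\lambda/(\lambda-t)-1 = t/(\lambda-t)$, the bound $|z-s|^{-1}\leq|\Im z|^{-1}$, a split of the $t$-integral at a threshold, the estimate $|t/(\lambda-t)|\leq 1+\alpha_{\lambda}$ on the tail absorbed by tightness, and the bound $R/|\Im\lambda|$ on the core killed by the non-tangential approach. Nothing is missing; the paper's proof is exactly this two-term estimate.
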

\begin{proof}
Observe that for $z,\lambda\notin\mathbb{R}$, $m>0$, and $\mu\in\mathcal{F}$,
we have
\begin{eqnarray*}
\left|\lambda G_{\mu}(z,\lambda)-G_{\mu^{(1)}}(z)\right| & = & \left|\int_{\mathbb{R}^{2}}\frac{1}{z-s}\left[\frac{\lambda}{\lambda-t}-1\right]\, d\mu(s,t)\right|\\
 & \leq & \frac{1}{\left|\Im z\right|}\int_{\left\{ (s,t):\left|t\right|\leq m\right\} }\left|\frac{t}{\lambda-t}\right|\, d\mu(s,t)\\
 &  & +\frac{1}{\left|\Im z\right|}\int_{\left\{ (s,t):\left|t\right|>m\right\} }\left|\frac{\lambda}{\lambda-t}-1\right|\, d\mu(s,t)\\
 & \leq & \frac{m}{\left|\Im z\right|\left|\Im\lambda\right|}+\frac{(\alpha_{\lambda}+1)}{\left|\Im z\right|}\mu(\mathbb{R}^{2}\setminus K_{m}).
\end{eqnarray*}
Likewise, for $\lambda,w\notin\mathbb{R}$ and $m>0$, we have
\[
\left|\lambda G_{\mu}(\lambda,w)-G_{\mu^{(2)}}(w)\right|\leq\frac{m}{\left|\Im\lambda\right|\left|\Im w\right|}+\frac{(1+\alpha_{\lambda})}{\left|\Im w\right|}\sup_{\mu\in\mathcal{F}}\mu(\mathbb{R}^{2}\setminus K_{m}).
\]
The result follows from these estimates.
\end{proof}
Since $\lim_{z\rightarrow_{\leftslice}\infty}zG_{\mu^{(1)}}(z)=1$
uniformly for $\mu^{(1)}$ in any tight family of probability measures
on $\mathbb{R}$ \cite{BerVoicu93}, we deduce from Lemma 2.1 that
\begin{equation}
G_{\mu}(z,w)=\frac{1}{zw}(1+o(1))\quad\text{as}\quad z,w\rightarrow_{\leftslice}\infty,\quad(z,w)\in(\mathbb{C}\setminus\mathbb{R})^{2},\label{eq:2.2}
\end{equation}
uniformly for $\mu$ within any tight family of probabilities on $\mathbb{R}^{2}$.
This non-tangential limiting behavior plays a role in our next result.

Recall that the set of all finite Borel signed measures on $\mathbb{R}^{2}$ is
equipped with the topology of weak convergence from duality with continuous and bounded functions on $\mathbb{R}^{2}$
under the sup norm. In this topology, a family of signed measures
is relatively compact if and only if it is tight and uniformly bounded
in total variation norms. Likewise, weak convergence of measures on
$\mathbb{R}$ is based on the duality with bounded continuous functions
on $\mathbb{R}$. By Prokhorov's theorem, a tight sequence of probability
measures contains a subsequence which converges
weakly to a probability measure. We write $\mu_{n}\Rightarrow\mu$
if the sequence $\{\mu_{n}\}_{n=1}^{\infty}$ converges weakly to
$\mu$ as $n\rightarrow\infty$. 
\begin{prop}
Let $\{\mu_{n}\}_{n=1}^{\infty}$ be a sequence of probability measures on
$\mathbb{R}^{2}$. Then the sequence $\mu_{n}$ converges weakly to
a probability measure on $\mathbb{R}^{2}$ if and only if \emph{(i)}
there exist two open subsets $U\subset\mathbb{C}^{+}\times\mathbb{C}^{+}$
and $V\subset\mathbb{C}^{+}\times\mathbb{C}^{-}$ such that the pointwise
limit $\lim_{n\rightarrow\infty}G_{\mu_{n}}(z,w)=G(z,w)$ exists for
every $(z,w)\in U\cup V$ and \emph{(ii)} $zwG_{\mu_{n}}(z,w)\rightarrow1$
uniformly in $n$ as $z,w\rightarrow_{\leftslice}\infty$.
Moreover, if $\mu_{n}\Rightarrow\mu$, then we have $G=G_{\mu}$.\end{prop}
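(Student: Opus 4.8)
The plan is to prove the two implications separately, with essentially all of the work concentrated in the sufficiency direction; the necessity direction and the ``moreover'' clause are quick consequences of the material already set up.

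For necessity, suppose $\mu_n\Rightarrow\mu$. For each fixed $(z,w)\in(\mathbb{C}\setminus\mathbb{R})^2$ the kernel $(s,t)\mapsto[(z-s)(w-t)]^{-1}$ is bounded and continuous on $\mathbb{R}^2$, so the definition of weak convergence gives $G_{\mu_n}(z,w)\to G_{\mu}(z,w)$ on all of $(\mathbb{C}\setminus\mathbb{R})^2$. Thus (i) holds with $U=\mathbb{C}^{+}\times\mathbb{C}^{+}$, $V=\mathbb{C}^{+}\times\mathbb{C}^{-}$ and $G=G_{\mu}$, which simultaneously settles the ``moreover'' statement. Since a weakly convergent sequence of probability measures on the Polish space $\mathbb{R}^2$ is tight, condition (ii) follows at once from the uniform asymptotic \eqref{eq:2.2}.

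For sufficiency, the first and principal step is to extract tightness of $\{\mu_n\}$ from (ii) alone. I would test (ii) along the non-tangential diagonal $z=w=iy$ with $y\to\infty$ and take real parts. A direct computation gives
\[
1-\Re\!\left[(iy)^2 G_{\mu_n}(iy,iy)\right]=\int_{\mathbb{R}^2}\frac{y^2(s^2+t^2+st)+s^2t^2}{(y^2+s^2)(y^2+t^2)}\,d\mu_n(s,t),
\]
and, using $s^2+t^2+st\ge\tfrac12(s^2+t^2)\ge 0$, the integrand is nonnegative. A short case analysis (according to whether only the larger of $|s|,|t|$ exceeds $y$, or both do) shows this integrand is bounded below by a universal constant, namely $\tfrac18$, on $\mathbb{R}^2\setminus K_m$ once we set $y=m$. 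Consequently $\mu_n(\mathbb{R}^2\setminus K_m)\le 8\,\bigl(1-\Re[(im)^2 G_{\mu_n}(im,im)]\bigr)$, and (ii) forces the right-hand side to $0$ uniformly in $n$ as $m\to\infty$, giving tightness. I expect this step --- taming the indefinite cross term $st$ in the diagonal kernel --- to be the main obstacle.

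With tightness established, Prokhorov's theorem guarantees that every subsequence of $\{\mu_n\}$ admits a further subsequence converging weakly to some probability measure, and it remains to show all such subsequential limits coincide. If $\mu_{n_k}\Rightarrow\nu$ and $\mu_{n_j}\Rightarrow\nu'$, then by the necessity argument $G_\nu=G=G_{\nu'}$ on $U\cup V$; since $G_\nu-G_{\nu'}$ is holomorphic on each component of $(\mathbb{C}\setminus\mathbb{R})^2$ and vanishes on the nonempty open sets $U$ and $V$, the identity theorem yields $G_\nu=G_{\nu'}$ throughout $\mathbb{C}^{+}\times\mathbb{C}^{+}$ and $\mathbb{C}^{+}\times\mathbb{C}^{-}$. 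These are precisely the two components appearing in the inversion formula recorded earlier (evaluated at $(x+iy,u+iy)$ and $(x+iy,u-iy)$), which is exactly why the hypothesis supplies one open set in each; that formula then forces $\nu=\nu'$. A tight sequence all of whose weak subsequential limits agree converges weakly to that common limit $\mu$, and $\mu$ is a probability measure, completing the proof.
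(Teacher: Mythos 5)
Your proof is correct --- I checked the algebra in the key step: the identity
\begin{equation*}
1-\Re\left[(iy)^2 G_{\mu_n}(iy,iy)\right]=\int_{\mathbb{R}^2}\frac{y^2(s^2+t^2+st)+s^2t^2}{(y^2+s^2)(y^2+t^2)}\,d\mu_n(s,t)
\end{equation*}
is right, and with $s^2+t^2+st\ge\tfrac12(s^2+t^2)$ your two-case bound (one of $|s|,|t|$ exceeds $m$, or both do) does give the uniform lower bound $\tfrac18$ on $\mathbb{R}^2\setminus K_m$ at $y=m$, hence tightness. The overall skeleton (necessity via bounded continuous kernels and tightness; sufficiency via tightness, Prokhorov, the identity theorem, and uniqueness of the Cauchy transform) matches the paper, but your mechanism for extracting tightness from (ii) is genuinely different. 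The paper never touches the indefinite cross term: it fixes $z=im$, sends the other variable to infinity, and invokes Lemma 2.1 to collapse to the marginal transform $(im)G_{\mu_n^{(1)}}(im)$, where the one-dimensional estimate $s^2/(m^2+s^2)\ge\tfrac12$ for $|s|>m$ yields tightness of each marginal, and then joint tightness follows since it is equivalent to marginal tightness. Your route buys self-containedness and economy of hypothesis --- you only use (ii) along the single diagonal ray $z=w=iy$, with no iterated limits and no appeal to Lemma 2.1 --- at the cost of the case analysis needed to tame $st$; the paper's route buys a cleaner estimate by separating variables, at the cost of routing through the marginal machinery. A second, smaller divergence: in the uniqueness step the paper extends $G_\nu=G_{\nu'}$ from the two components to all of $(\mathbb{C}\setminus\mathbb{R})^2$ by conjugation symmetry before citing uniqueness, whereas you correctly observe that no extension is needed, since the inversion formula only ever evaluates $G$ at points of $\mathbb{C}^{+}\times\mathbb{C}^{+}$ and $\mathbb{C}^{+}\times\mathbb{C}^{-}$ --- which is also a tidy explanation of why the hypothesis asks for one open set in each of those two components.
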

\begin{proof}
Assume $\mu_{n}\Rightarrow\mu$ for some probability $\mu$ on $\mathbb{R}^{2}$.
The pointwise convergence $G_{\mu_{n}}\rightarrow G_{\mu}$ follows
from the definition of weak convergence and the estimate
\[
\frac{1}{\left|z-s\right|\left|w-t\right|}\leq\frac{1}{\left|\Im z\right|\left|\Im w\right|},\qquad s,t\in\mathbb{R};\;\Im z,\Im w\neq0.
\]
As we have seen earlier, the non-tangential limit (ii) is a consequence
of the tightness of $\{\mu_{n}\}_{n=1}^{\infty}$.

Conversely, we assume (i) and (ii). The uniform condition (ii) implies that
for any given $\varepsilon>0$, there corresponds $m=m(\varepsilon)>0$
such that
\[
\left|(iy)(iv)G_{\mu_{n}}(iy,iv)-1\right|<\varepsilon,\qquad y,v\geq m,\quad n\geq1.
\]
Taking $v\rightarrow\infty$ and fix $y=m$, Lemma 2.1 shows that
\begin{eqnarray*}
\varepsilon & \geq & \left|(im)G_{\mu_{n}^{(1)}}(im)-1\right|\\
 & \geq & -\Re\left[(im)G_{\mu_{n}^{(1)}}(im)-1\right]\\
 & = & \int_{\mathbb{R}}\frac{s^{2}}{m^{2}+s^{2}}\, d\mu_{n}^{(1)}(s)=\int_{\mathbb{R}^{2}}\frac{s^{2}}{m^{2}+s^{2}}\, d\mu_{n}(s,t)\geq\frac{1}{2}\mu_{n}(\left\{ (s,t):\left|s\right|>m\right\} )
\end{eqnarray*}
for every $n\geq1$. Similarly, we get
\[
\sup_{n\geq1}\mu_{n}(\left\{ (s,t):\left|t\right|>m\right\} )\leq2\varepsilon
\]
after taking $y\rightarrow\infty$ and fixing $v=m$. We conclude from these uniform estimates that the sequence $\{\mu_{n}\}_{n=1}^{\infty}$ is tight
and hence it possesses weak limit points, at least one of which is
a probability law on $\mathbb{R}^{2}$.

We shall argue that there can only be one weak limit for $\{\mu_{n}\}_{n=1}^{\infty}$
and therefore the entire sequence $\mu_{n}$ must converge weakly
to that unique probability limit law. Indeed, suppose $\mu$ and $\nu$
are both weak limits of $\{\mu_{n}\}_{n=1}^{\infty}$, then the condition
(i) implies that
\[
G_{\mu}(z,w)=G(z,w)=G_{\nu}(z,w),\qquad(z,w)\in U\cup V.
\]
Since $G_{\mu}$ and $G_{\nu}$ are holomorphic in $(\mathbb{C}^{+}\times\mathbb{C}^{+})\cup(\mathbb{C}^{+}\times\mathbb{C}^{-})$
and $U$ and $V$ are open sets, the Identity Theorem in multidimensional
complex analysis implies that $G_{\mu}=G_{\nu}$ in $(\mathbb{C}^{+}\times\mathbb{C}^{+})\cup(\mathbb{C}^{+}\times\mathbb{C}^{-})$.
Moreover, we can extend the functional equation $G_{\mu}=G_{\nu}$
to the whole $(\mathbb{C}\setminus\mathbb{R})^{2}$ by taking its
complex conjugation and conclude that $\mu$ and $\nu$ have the same
Cauchy transform. Since Cauchy transform determines the underlying
measure uniquely, we conclude that $\mu=\nu$, finishing the proof.
\end{proof}
Note that the condition (ii) in this proposition is in fact equivalent
to the tightness of the sequence $\left\{ \mu_{n}\right\} _{n=1}^{\infty}$.

We also have the following continuity result if the limit law has
been specified in advance; in which case, the pointwise convergence
of Cauchy transforms suffices for the weak convergence of measures.
\begin{prop}
Let $\mu$, $\mu_{1}$, $\mu_{2}$, $\cdots$ be probability measures
on $\mathbb{R}^{2}$. Then $\mu_{n}\Rightarrow\mu$ if and only if
$\lim_{n\rightarrow\infty}G_{\mu_{n}}(z,w)=G_{\mu}(z,w)$ for every
$(z,w)\in(\mathbb{C}\setminus\mathbb{R})^{2}$. \end{prop}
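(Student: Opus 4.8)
The forward implication requires no new idea. If $\mu_{n}\Rightarrow\mu$, then for each fixed $(z,w)\in(\mathbb{C}\setminus\mathbb{R})^{2}$ the kernel $(s,t)\mapsto 1/[(z-s)(w-t)]$ is bounded and continuous on $\mathbb{R}^{2}$, so the very definition of weak convergence (applied to real and imaginary parts) yields $G_{\mu_{n}}(z,w)\to G_{\mu}(z,w)$. The whole content lies in the converse, and the point is that \emph{no} separate uniform tightness hypothesis, such as condition (ii) of Proposition 2.2, needs to be imposed: it will be extracted automatically from the fact that the prescribed limit $\mu$ is a probability measure. My plan is to first deduce \emph{vague} convergence of $\mu_{n}$ to $\mu$ from the pointwise convergence of the Cauchy transforms, and then to upgrade it to weak convergence by using the conservation of total mass.

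For the vague convergence, I would work with the linear span $\mathcal{A}$ of the kernels $\{(s,t)\mapsto 1/[(z-s)(w-t)]:z,w\notin\mathbb{R}\}$ together with their complex conjugates. A partial-fraction computation shows that $\mathcal{A}$ is in fact a conjugation-closed subalgebra of $C_{0}(\mathbb{R}^{2})$: applying $\frac{1}{(z-s)(z'-s)}=\frac{1}{z'-z}\left[\frac{1}{z-s}-\frac{1}{z'-s}\right]$ in each variable, the product of two kernels decomposes into a finite combination of kernels, the coincident-pole terms $1/(z-s)^{2}$ arising as limits of such combinations. Now the hypothesis gives $\int h\,d\mu_{n}\to\int h\,d\mu$ for the generating kernels; and since the $G_{\mu_{n}}$ are uniformly bounded by $1/(\left|\Im z\right|\left|\Im w\right|)$ on compact subsets of $(\mathbb{C}\setminus\mathbb{R})^{2}$, Montel's and Vitali's theorems in several variables promote the pointwise convergence $G_{\mu_{n}}\to G_{\mu}$ to locally uniform convergence, hence to convergence of all partial derivatives. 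This secures $\int h\,d\mu_{n}\to\int h\,d\mu$ for every $h\in\mathcal{A}$, including the terms with higher-order poles, which are precisely the partial derivatives of $G_{\mu_{n}}$. As $\mathcal{A}$ separates the points of $\mathbb{R}^{2}$ and vanishes nowhere, the Stone--Weierstrass theorem makes it dense in $C_{0}(\mathbb{R}^{2})$; combined with the uniform bound $\sup_{n}\left\Vert \mu_{n}\right\Vert =1$, a routine $3\varepsilon$-approximation extends the convergence to all of $C_{0}(\mathbb{R}^{2})$, i.e. $\mu_{n}\to\mu$ vaguely. I expect this density step to be the main obstacle, since it is here that the genuinely two-dimensional structure of the transform, and the need to reach products and higher poles, must be handled honestly rather than by a one-variable analogy.

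Finally I would upgrade vague convergence to weak convergence by first deducing tightness. Given $\varepsilon>0$, pick $\varphi\in C_{c}(\mathbb{R}^{2})$ with $0\le\varphi\le1$ and $\int\varphi\,d\mu>1-\varepsilon$, which is available because $\mu$ is a probability measure. Vague convergence then forces $\liminf_{n}\mu_{n}(\mathrm{supp}\,\varphi)\ge\int\varphi\,d\mu>1-\varepsilon$, so that $\mu_{n}(\mathbb{R}^{2}\setminus K_{m})\le\varepsilon$ for all large $n$ once $K_{m}\supseteq\mathrm{supp}\,\varphi$; enlarging $K_{m}$ to absorb the finitely many exceptional indices gives exactly the tightness of $\{\mu_{n}\}_{n=1}^{\infty}$. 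By Prokhorov's theorem every subsequence then has a further subsequence converging weakly to some probability measure $\nu$, and by the forward implication already established $G_{\nu}=\lim_{n}G_{\mu_{n}}=G_{\mu}$ on $(\mathbb{C}\setminus\mathbb{R})^{2}$. Since the Cauchy transform determines the underlying measure uniquely, $\nu=\mu$; as every subsequential weak limit equals $\mu$, the entire sequence satisfies $\mu_{n}\Rightarrow\mu$, completing the argument.
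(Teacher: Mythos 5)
Your argument is correct, but it follows a genuinely different route from the paper's. The paper handles the converse by pure compactness: the $\mu_{n}$, being bounded in total variation, have weak-star limit points by Banach--Alaoglu in the duality with $C_{0}(\mathbb{R}^{2})$; since each Cauchy kernel $1/[(z-s)(w-t)]$ belongs to $C_{0}(\mathbb{R}^{2})$, every such limit point $\sigma$ satisfies $G_{\sigma}=G_{\mu}$, whence $\sigma=\mu$ by the Stieltjes inversion formula of Section 2.1, and therefore the whole sequence converges weak-star to $\mu$, hence weakly, because the limit is a probability measure. No density argument appears anywhere: subsequential limits are identified by testing against the kernels alone. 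You instead prove vague convergence of the full sequence directly, which forces you to show that the kernels span a dense subspace of $C_{0}(\mathbb{R}^{2})$ --- hence Stone--Weierstrass, plus Montel/Vitali and Cauchy estimates to obtain convergence of $\int(z-s)^{-a}(w-t)^{-b}\,d\mu_{n}$ for the higher-order poles that products of kernels generate --- and you then recover weak convergence through tightness and Prokhorov. What your route buys is a reusable, explicit density statement and a proof that never invokes abstract weak-star compactness; what it costs is exactly that extra machinery, which the paper's extraction-plus-uniqueness scheme sidesteps (your final tightness step is the explicit form of the mass-conservation upgrade that the paper leaves implicit). One point should be repaired in the write-up: the linear span $\mathcal{A}$ of the kernels and their conjugates is \emph{not} literally a subalgebra --- a product with coincident poles, such as one producing $1/[(z-s)^{2}(w-t)(w'-t)]$, has a double pole and, by uniqueness of partial-fraction decompositions, cannot equal any finite combination of simple-pole kernels. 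You must apply Stone--Weierstrass either to the norm closure of $\mathcal{A}$ or to the span of all $1/[(z-s)^{a}(w-t)^{b}]$ with $a,b\geq1$; since your Vitali/derivative argument yields convergence of the integrals of precisely these functions, the fix is cosmetic, but the assertion that $\mathcal{A}$ itself is closed under multiplication is false as stated.
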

\begin{proof}
Only the ``if'' part needs a proof. Assume the pointwise convergence
$G_{\mu_{n}}\rightarrow G_{\mu}$. Since sets bounded in total variation norm are also weak-star
pre-compact sets, the sequence $\{\mu_{n}\}_{n=1}^{\infty}$ has a weak-star
limit point, say, $\sigma$. Observe that for every $(z,w)\in(\mathbb{C}\setminus\mathbb{R})^{2}$,
the corresponding Cauchy kernel
\[
\frac{1}{(z-s)(w-t)},\qquad(s,t)\in\mathbb{R}^{2},
\]
is a continuous function vanishing at infinity. Therefore, being a weak-star
limit point of the sequence $\{\mu_{n}\}_{n=1}^{\infty}$, the measure $\sigma$ must satisfy $G_{\sigma}=G_{\mu}$ on the domain $(\mathbb{C}\setminus\mathbb{R})^{2}$. We conclude that any weak-star limit $\sigma$ is in fact equal
to the given probability measure $\mu$ and therefore $\mu_{n}\Rightarrow\mu$ holds.
\end{proof}

\subsection{$R$-transforms}

We now turn to $R$-transform. Recall that a (truncated) Stolz angle
$\Delta_{\alpha,\beta}\subset\mathbb{C}^{-}$ at zero is the convex
domain defined by
\[
\Delta_{\alpha,\beta}=\left\{ x+iy\in\mathbb{C}^{-}:\left|x\right|<-\alpha y,\: y>-\beta\right\} ,
\]
where $\alpha,\beta>0$ are two parameters controlling the size of $\Delta_{\alpha,\beta}$.
The notation $\overline{\Delta_{\alpha,\beta}}$ means the reflection
$\left\{ \overline{z}:z\in\Delta_{\alpha,\beta}\right\} $. As shown
in \cite{BerVoicu93}, Stolz angles and their reflections are the natural
domains of definition for the usual one-dimensional $R$-transforms.

Given a Stolz angle $\Delta_{\alpha,\beta}$, we introduce the product
domain
\[
\Omega_{\alpha,\beta}=(\Delta_{\alpha,\beta}\cup\overline{\Delta_{\alpha,\beta}})\times(\Delta_{\alpha,\beta}\cup\overline{\Delta_{\alpha,\beta}})=\{(z,w):z,w\in \Delta_{\alpha,\beta}\cup\overline{\Delta_{\alpha,\beta}}\}.
\]
Since $z\rightarrow0$ in $\Delta_{\alpha,\beta}\cup\overline{\Delta_{\alpha,\beta}}$
if and only if $1/z\rightarrow_{\leftslice}\infty$, we have  $(z,w)\rightarrow(0,0)$ within $\Omega_{\alpha,\beta}$ if and only if $1/z,1/w\rightarrow_{\leftslice}\infty$. For notational
convenience, we will often write $\Delta$ for $\Delta_{\alpha,\beta}$
and $\Omega$ for $\Omega_{\alpha,\beta}$ in the sequel.

Following Voiculescu \cite{V14, V15}, given a probability $\mu$ on $\mathbb{R}^{2}$, its \emph{bi-free
partial} \emph{$R$-transform} (or, just $R$-transform for short)
is defined as
\begin{equation}\label{eq:2.3}
R_{\mu}(z,w)=zR_{\mu^{(1)}}(z)+wR_{\mu^{(2)}}(w)+\left[1-\frac{1}{h_{\mu}(z,w)}\right],
\end{equation}
where
\[
h_{\mu}(z,w)=G_{\mu}\left(1/z+R_{\mu^{(1)}}(z),1/w+R_{\mu^{(2)}}(w)\right)/zw
\]
and the function $R_{\mu^{(j)}}$ is the one-dimensional $R$-transform
for the marginal $\mu^{(j)}$. According to the non-tangential asymptotics
(\ref{eq:2.2}) and the fact that $1/\lambda+R_{\mu^{(j)}}(\lambda)=(1/\lambda)(1+o(1))\rightarrow_{\leftslice}\infty$
as $\lambda\rightarrow0$ within any Stolz angle at zero \cite{BerVoicu93}, there exists a small Stolz angle $\Delta$ such that the map $h_{\mu}$ is well-defined on the corresponding product domain
$\Omega=(\Delta\cup\overline{\Delta})\times (\Delta\cup\overline{\Delta})$, and the function $h_{\mu}$ never vanishes on $\Omega$. Therefore, the resulting bi-free $R$-transform $R_{\mu}$ is well-defined
and holomorphic on the set $\Omega$.

It is understood that we will always take such a set $\Omega$ as the domain of definition for the $R$-transform, unless the measure $\mu$ is compactly supported. Indeed, if $\mu$ has a bounded support, then the domain $\Omega$
can be chosen as an open bidisk centered at $(0,0)$, on which the
map $R_{\mu}$ admits an absolutely convergent power series expansion with real coefficients. In other words, the bi-free $R$-transform in this case extends analytically to a neighborhood of $(0,0)$. Also, the $R$-transform linearizes the
bi-free convolution of compactly supported measures, as shown in Voiculescu's work \cite{V14, V15}. In particular, in this case the sum of two such $R$-transforms
is another $R$-transform on their common domain of definition.

Finally, since the maps $R_{\mu^{(j)}}$ ($j=1,2$)
satisfy the symmetry property $R_{\mu^{(j)}(\lambda)}=\overline{R_{\mu^{(j)}}(\overline{\lambda})}$
\cite{BerVoicu93}, we also have
\[
R_{\mu}(z,w)=\overline{R_{\mu}(\overline{z},\overline{w})},\qquad(z,w)\in\Omega.
\]

As in the case of Cauchy transform, the one-dimensional $R$-transform
can be recovered from $R_{\mu}$ as a limit.
\begin{lem}
Let $R_{\mu}:\Omega\rightarrow\mathbb{C}$ be the $R$-transform of
a probability measure $\mu$ on $\mathbb{R}^{2}$. Then:
\begin{enumerate}[$\qquad(1)$]
\item For any $(z,w)\in\Omega$, we have
\[
\begin{cases}
\lim_{\lambda\rightarrow0}R_{\mu}(z,\lambda)=zR_{\mu^{(1)}}(z);\\
\lim_{\lambda\rightarrow0}R_{\mu}(\lambda,w)=wR_{\mu^{(2)}}(w).
\end{cases}
\]

\item $\lim_{(z,w)\rightarrow(0,0)}R_{\mu}(z,w)=0$.
\end{enumerate}
\end{lem}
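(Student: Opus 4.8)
The plan is to read off both limits directly from the defining formula \eqref{eq:2.3}, reducing everything to the single assertion that $h_\mu\to 1$ in each regime. The two genuinely two-dimensional inputs I would use are Lemma 2.1 and the non-tangential asymptotic \eqref{eq:2.2}, both applied to the singleton (hence trivially tight) family $\{\mu\}$; the one-dimensional inputs are the symmetry and asymptotics of the maps $R_{\mu^{(j)}}$ recorded above, in particular that $1/\lambda+R_{\mu^{(j)}}(\lambda)=(1/\lambda)(1+o(1))\to_{\leftslice}\infty$ as $\lambda\to 0$, which forces $\lambda R_{\mu^{(j)}}(\lambda)\to 0$ and $\lambda(1/\lambda+R_{\mu^{(j)}}(\lambda))\to 1$.

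For part (1), fix $(z,w)\in\Omega$ and consider the first limit. I would abbreviate $Z=1/z+R_{\mu^{(1)}}(z)$, a fixed point of $\mathbb{C}\setminus\mathbb{R}$, and $W(\lambda)=1/\lambda+R_{\mu^{(2)}}(\lambda)$, which tends to infinity non-tangentially as $\lambda\to 0$. Rewriting the definition gives
\[
h_\mu(z,\lambda)=\frac{W(\lambda)\,G_\mu\big(Z,W(\lambda)\big)}{z\,\lambda W(\lambda)}.
\]
Lemma 2.1, applied with $Z$ held fixed in the first slot and $W(\lambda)\to_{\leftslice}\infty$ in the second, shows that the numerator converges to $G_{\mu^{(1)}}(Z)$, and the one-dimensional inversion relation $G_{\mu^{(1)}}(1/z+R_{\mu^{(1)}}(z))=z$ identifies this limit as $z$. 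Since the denominator satisfies $z\,\lambda W(\lambda)=z(1+\lambda R_{\mu^{(2)}}(\lambda))\to z$, I conclude $h_\mu(z,\lambda)\to 1$, hence $1-1/h_\mu(z,\lambda)\to 0$. Combined with $\lambda R_{\mu^{(2)}}(\lambda)\to 0$, formula \eqref{eq:2.3} then yields $R_\mu(z,\lambda)\to zR_{\mu^{(1)}}(z)$. The second limit follows by the symmetric argument, exchanging the roles of the two coordinates.

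For part (2), as $(z,w)\to(0,0)$ within $\Omega$ both arguments $Z=1/z+R_{\mu^{(1)}}(z)$ and $W=1/w+R_{\mu^{(2)}}(w)$ tend to infinity non-tangentially. Here I would invoke \eqref{eq:2.2} to write $G_\mu(Z,W)=(ZW)^{-1}(1+o(1))$, so that
\[
h_\mu(z,w)=\frac{G_\mu(Z,W)}{zw}=\frac{1+o(1)}{(zZ)(wW)}.
\]
Because $zZ=1+zR_{\mu^{(1)}}(z)\to 1$ and $wW=1+wR_{\mu^{(2)}}(w)\to 1$, this gives $h_\mu(z,w)\to 1$, whence $1-1/h_\mu(z,w)\to 0$; together with $zR_{\mu^{(1)}}(z),\,wR_{\mu^{(2)}}(w)\to 0$ this makes $R_\mu(z,w)\to 0$.

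The computations are routine once the reductions are in place, so I expect the main points to be conceptual rather than technical. First, one must recognize that the fixed-coordinate limit in part (1) is exactly the situation handled by Lemma 2.1, so that the two-variable Cauchy transform collapses to the one-variable transform of a marginal, and then use the inversion identity $G_{\mu^{(1)}}(1/z+R_{\mu^{(1)}}(z))=z$ to evaluate it. Second, the appeals to Lemma 2.1 and \eqref{eq:2.2} are legitimate only because the relevant arguments approach infinity \emph{non-tangentially}; I expect the main thing to verify carefully is precisely this hypothesis, which is supplied by the cited Bercovici--Voiculescu asymptotic $1/\lambda+R_{\mu^{(j)}}(\lambda)\to_{\leftslice}\infty$, together with the bookkeeping that the normalizing factors $\lambda W(\lambda)$, $zZ$, and $wW$ all tend to $1$.
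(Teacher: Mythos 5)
Your proposal is correct and follows essentially the same route as the paper's proof: both reduce each limit to the single claim $h_\mu\to 1$ via formula \eqref{eq:2.3} and the one-dimensional fact $\lambda R_{\mu^{(j)}}(\lambda)\to 0$, then obtain that claim from Lemma 2.1 in part (1) and from the non-tangential asymptotic \eqref{eq:2.2} in part (2). The only difference is that you spell out the bookkeeping the paper leaves implicit (the inversion identity $G_{\mu^{(1)}}(1/z+R_{\mu^{(1)}}(z))=z$ and the factors $\lambda W(\lambda),\,zZ,\,wW\to 1$), which is a faithful expansion rather than a different argument.
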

\begin{proof}
We will only prove the limit $\lim_{\lambda\rightarrow0}R_{\mu}(z,\lambda)=zR_{\mu^{(1)}}(z)$,
for the second limit follows by the same argument, and (2) is a direct
consequence of the non-tangential limit (\ref{eq:2.2}).

Since $\lim_{\lambda\rightarrow0}\lambda R_{\mu^{(2)}}(\lambda)=0$
\cite{BerVoicu93}, it suffices to show that
\[
h_{\mu}(z,\lambda)=\frac{G_{\mu}\left(1/z+R_{\mu^{(1)}}(z),1/\lambda+R_{\mu^{(2)}}(\lambda)\right)}{z\lambda}\rightarrow1
\]
for any $z\in\Delta\cup\overline{\Delta}$ as $\lambda\rightarrow0$.
This, however, follows from Lemma 2.1, because $1/\lambda+R_{\mu^{(2)}}(\lambda)=(1/\lambda)(1+o(1))\rightarrow_{\leftslice}\infty$
as $\lambda\rightarrow0$, $\lambda\in\Delta\cup\overline{\Delta}$.
\end{proof}
We remark that if we take the tightness of measures into account,
then the limits in Lemma 2.4 can also be made uniform over any tight
family of probability measures on $\mathbb{R}^{2}$.

An immediate consequence of Lemma 2.4 is that the function $R_{\mu}$
determines the measure $\mu$ uniquely.
\begin{prop}
If two probability measures $\mu$ and $\nu$ have the same $R$-transform,
then $\mu=\nu$.\end{prop}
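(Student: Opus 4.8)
The plan is to bootstrap from the one-dimensional data encoded in $R_{\mu}$ up to the full two-dimensional Cauchy transform, and then invoke the uniqueness of the Cauchy transform established earlier. First I would fix a product domain $\Omega=(\Delta\cup\overline{\Delta})\times(\Delta\cup\overline{\Delta})$ on which both $R_{\mu}$ and $R_{\nu}$ are defined and satisfy $R_{\mu}=R_{\nu}$. Applying part $(1)$ of Lemma 2.4 to each measure and comparing the limits as $\lambda\rightarrow0$, I obtain $zR_{\mu^{(1)}}(z)=zR_{\nu^{(1)}}(z)$ and $wR_{\mu^{(2)}}(w)=wR_{\nu^{(2)}}(w)$, whence the marginal $R$-transforms coincide: $R_{\mu^{(1)}}=R_{\nu^{(1)}}$ and $R_{\mu^{(2)}}=R_{\nu^{(2)}}$ on $\Delta\cup\overline{\Delta}$.

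Next, since the marginal $R$-transforms agree, the two arguments appearing in the definitions of $h_{\mu}$ and $h_{\nu}$ are identical. Writing $\phi_{1}(z)=1/z+R_{\mu^{(1)}}(z)$ and $\phi_{2}(w)=1/w+R_{\mu^{(2)}}(w)$, the defining formula \eqref{eq:2.3} together with $R_{\mu}=R_{\nu}$ forces $h_{\mu}=h_{\nu}$ on $\Omega$; that is,
\[
G_{\mu}(\phi_{1}(z),\phi_{2}(w))=G_{\nu}(\phi_{1}(z),\phi_{2}(w)),\qquad(z,w)\in\Omega.
\]
I would then restrict attention to $z,w\in\Delta\subset\mathbb{C}^{-}$. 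Since $1/z\in\mathbb{C}^{+}$ for such $z$ and $\phi_{j}(\lambda)=(1/\lambda)(1+o(1))\rightarrow_{\leftslice}\infty$ as $\lambda\rightarrow0$ in $\Delta$, after shrinking $\Delta$ if necessary each $\phi_{j}$ maps $\Delta$ into $\mathbb{C}^{+}$; moreover each $\phi_{j}$ is a non-constant holomorphic function of one variable, so by the open mapping theorem $\phi_{j}(\Delta)$ is an open subset of $\mathbb{C}^{+}$.

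Consequently the product map $(z,w)\mapsto(\phi_{1}(z),\phi_{2}(w))$ carries $\Delta\times\Delta$ onto the open set $\phi_{1}(\Delta)\times\phi_{2}(\Delta)\subset\mathbb{C}^{+}\times\mathbb{C}^{+}$, on which the displayed identity yields $G_{\mu}=G_{\nu}$. The Identity Theorem in several complex variables, exactly as used in the proof of Proposition 2.2, then propagates this equality to the entire connected component $\mathbb{C}^{+}\times\mathbb{C}^{+}$, and the conjugation symmetry $G_{\mu}(z,w)=\overline{G_{\mu}(\overline{z},\overline{w})}$ (and likewise for $\nu$) extends it to all of $(\mathbb{C}\setminus\mathbb{R})^{2}$. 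Since the Cauchy transform determines the underlying measure uniquely, I conclude $\mu=\nu$. The one genuinely delicate point is this middle step: verifying that the image $\phi_{1}(\Delta)\times\phi_{2}(\Delta)$ is a bona fide open subset lying within a single connected component of $(\mathbb{C}\setminus\mathbb{R})^{2}$, for only then is the multidimensional Identity Theorem applicable. This is precisely where the non-tangential asymptotics of $\phi_{j}$ near zero and the open mapping theorem are essential.
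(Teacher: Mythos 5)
Your argument parallels the paper's proof up to the identity $G_{\mu}(\phi_{1}(z),\phi_{2}(w))=G_{\nu}(\phi_{1}(z),\phi_{2}(w))$ on $\Omega$, and replacing the paper's citation (that $\phi_{j}(\Delta)$ contains a truncated cone $\Gamma$) by the open mapping theorem is a perfectly sound way to produce an open set of agreement. The genuine gap is in the final step. By restricting to $z,w\in\Delta$ you only obtain $G_{\mu}=G_{\nu}$ on an open subset of $\mathbb{C}^{+}\times\mathbb{C}^{+}$; the Identity Theorem then gives equality on $\mathbb{C}^{+}\times\mathbb{C}^{+}$, and the symmetry $G_{\mu}(z,w)=\overline{G_{\mu}(\overline{z},\overline{w})}$ gives it on $\mathbb{C}^{-}\times\mathbb{C}^{-}$ -- but conjugation maps $\mathbb{C}^{+}\times\mathbb{C}^{-}$ onto $\mathbb{C}^{-}\times\mathbb{C}^{+}$, so the two mixed components are never reached. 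This is not a cosmetic omission: the restriction of a two-variable Cauchy transform to $(\mathbb{C}^{+}\times\mathbb{C}^{+})\cup(\mathbb{C}^{-}\times\mathbb{C}^{-})$ does \emph{not} determine the measure. Indeed, for $z,w\in\mathbb{C}^{+}$ one has $\frac{1}{(z-s)(w-t)}=-\int_{0}^{\infty}\int_{0}^{\infty}e^{i\xi(z-s)}e^{i\eta(w-t)}\,d\xi\,d\eta$, so on that component $G_{\sigma}$ sees only the Fourier transform $\widehat{\sigma}$ restricted to the closed first quadrant; taking a real-valued Schwartz function of integral zero whose Fourier transform is supported in the union of the open second and fourth quadrants, and adding a small multiple of it to a suitable probability density, produces two \emph{distinct} probability measures whose Cauchy transforms agree on both diagonal components. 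This is precisely why Proposition 2.2 of the paper requires open sets in both $\mathbb{C}^{+}\times\mathbb{C}^{+}$ and $\mathbb{C}^{+}\times\mathbb{C}^{-}$, and why the inversion formula of Section 2.1 evaluates $G_{\mu}$ at both $(x+iy,u+iy)$ and $(x+iy,u-iy)$. So the delicate point is not the openness of the image (which you handled), but the coverage of all four connected components of $(\mathbb{C}\setminus\mathbb{R})^{2}$.

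The fix is immediate, and it is the reason the $R$-transform's domain is the full product $\Omega=(\Delta\cup\overline{\Delta})\times(\Delta\cup\overline{\Delta})$ rather than $\Delta\times\Delta$: your identity also holds for $z\in\Delta$, $w\in\overline{\Delta}$, and since $\phi_{2}(\overline{w})=\overline{\phi_{2}(w)}$ the same open mapping argument shows that $\phi_{1}(\Delta)\times\phi_{2}(\overline{\Delta})$ is a nonempty open subset of $\mathbb{C}^{+}\times\mathbb{C}^{-}$. Running the Identity Theorem in that component and conjugating then covers $\mathbb{C}^{-}\times\mathbb{C}^{+}$ as well, after which the uniqueness of the Cauchy transform concludes the proof as you intended. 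This is in effect what the paper does: its set $(\Gamma\cup\overline{\Gamma})\times(\Gamma\cup\overline{\Gamma})$ meets all four components at once.
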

\begin{proof}
If $R_{\mu}=R_{\nu}$ on a domain $\Omega=(\Delta\cup\overline{\Delta})\times (\Delta\cup\overline{\Delta})$, then $\mu^{(j)}=\nu^{(j)}$
($j=1,2$) by Lemma 2.4. The definition \eqref{eq:2.3} implies further that \[G_{\mu}\left(1/z+R_{\mu^{(1)}}(z),1/w+R_{\mu^{(2)}}(w)\right)=G_{\nu}\left(1/z+R_{\nu^{(1)}}(z),1/w+R_{\nu^{(2)}}(w)\right)\] for $(z,w)\in\Omega$.

Because the image of the Stolz angle $\Delta$ under the map $\lambda \mapsto 1/\lambda+R_{\mu^{(j)}}(\lambda)$ contains a truncated cone \[\Gamma=\left\{ x+iy\in\mathbb{C}:\left|x\right|<ay,y>b\right\}\]
for some $a,b>0$ (cf. \cite{BerVoicu93}), we conclude that $G_{\mu}=G_{\nu}$ on the open set $(\Gamma\cup\overline{\Gamma})\times (\Gamma\cup\overline{\Gamma})$. Therefore, we have $G_{\mu}=G_{\nu}$ on the entire $(\mathbb{C}\setminus\mathbb{R})^{2}$ by analyticity.
The fact that $\mu$ and $\nu$ have the same Cauchy transform yields
the result.
\end{proof}
We now present a continuity theorem for the bi-free $R$-transform.
\begin{prop}
Let $\{\mu_{n}\}_{n=1}^{\infty}$ be a sequence of probability measures on
$\mathbb{R}^{2}$. Then $\mu_{n}$ converges weakly to a probability
measure on $\mathbb{R}^{2}$ if and only if
\begin{enumerate}[$\qquad(1)$]
\item there exists a Stolz angle $\Delta$ such that all $R_{\mu_{n}}$
are defined in the product domain $\Omega=(\Delta\cup\overline{\Delta})\times(\Delta\cup\overline{\Delta})$;
\item the pointwise limit $\lim_{n\rightarrow\infty}R_{\mu_{n}}(z,w)=R(z,w)$
exists for every $(z,w)$ in the domain $\Omega$; and
\item the limit $R_{\mu_{n}}(-iy,-iv)\rightarrow0$ holds uniformly in $n$
as $y,v\rightarrow0^{+}$.
\end{enumerate}

Moreover, if $\mu_{n}\Rightarrow\mu$, then we have $R=R_{\mu}$.

\end{prop}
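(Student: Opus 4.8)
The plan is to prove both implications by transferring every hypothesis to the Cauchy transform and then invoking Propositions 2.2, 2.3, and 2.5 together with the one-dimensional continuity theory of \cite{BerVoicu93}. Throughout I will use the defining relation \eqref{eq:2.3}, rearranged as
\[
1-\frac{1}{h_{\mu}(z,w)}=R_{\mu}(z,w)-zR_{\mu^{(1)}}(z)-wR_{\mu^{(2)}}(w),
\]
together with the fact (Lemma 2.4(1)) that the one-variable data $zR_{\mu^{(1)}}(z)$ and $wR_{\mu^{(2)}}(w)$ are recovered from $R_{\mu}$ by sending the other variable to $0$.

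\emph{Necessity.} Suppose first that $\mu_{n}\Rightarrow\mu$. Then the marginals satisfy $\mu_{n}^{(j)}\Rightarrow\mu^{(j)}$, so by the one-dimensional continuity theorem for the $R$-transform \cite{BerVoicu93} there is a common Stolz angle on which $R_{\mu_{n}^{(j)}}\to R_{\mu^{(j)}}$; since $\{\mu_{n}\}$ is tight, the uniform asymptotics \eqref{eq:2.2} allow us to shrink this to a single $\Delta$ on which all $h_{\mu_{n}}$ are defined and nonvanishing, which is (1). For (2), observe that the arguments $1/z+R_{\mu_{n}^{(1)}}(z)$ and $1/w+R_{\mu_{n}^{(2)}}(w)$ converge; since the family $\{G_{\mu_{n}}\}$ is uniformly bounded by $1/(|\Im z||\Im w|)$, hence equicontinuous on compact subsets of $(\mathbb{C}\setminus\mathbb{R})^{2}$, Proposition 2.3 gives convergence of $G_{\mu_{n}}$ at these moving arguments to the corresponding value of $G_{\mu}$. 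Thus $h_{\mu_{n}}\to h_{\mu}$ and therefore $R_{\mu_{n}}\to R_{\mu}$, which is (2) together with $R=R_{\mu}$. Finally, (3) is the restriction to the path $(-iy,-iv)$ of the uniform version of Lemma 2.4(2) over the tight family $\{\mu_{n}\}$ noted in the remark following Lemma 2.4.

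\emph{Sufficiency.} Now assume (1)--(3); the goal is to produce a probability limit, and the crucial point is that (3) forces tightness of $\{\mu_{n}\}$. Fixing $\varepsilon>0$ with corresponding $\delta$, hold $y\in(0,\delta)$ and let $v\to0^{+}$ in $|R_{\mu_{n}}(-iy,-iv)|<\varepsilon$; by Lemma 2.4(1) this yields $y\,|R_{\mu_{n}^{(1)}}(-iy)|\le\varepsilon$ for all $n$ and all $0<y<\delta$, and symmetrically for the second marginal. Writing $w_{n}=i/y+R_{\mu_{n}^{(1)}}(-iy)$, so that $G_{\mu_{n}^{(1)}}(w_{n})=-iy$ by the functional-inverse relation defining the one-dimensional $R$-transform, one computes $w_{n}G_{\mu_{n}^{(1)}}(w_{n})-1=-iy\,R_{\mu_{n}^{(1)}}(-iy)$, whence $|w_{n}G_{\mu_{n}^{(1)}}(w_{n})-1|\le\varepsilon$ with $w_{n}$ nontangential and $\Im w_{n}\asymp 1/y\to\infty$ uniformly in $n$. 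An estimate parallel to the one in the proof of Proposition 2.2 then bounds $\mu_{n}^{(1)}(\{|s|>M\})$ uniformly in $n$, so $\{\mu_{n}^{(1)}\}$ is tight; the same holds for $\{\mu_{n}^{(2)}\}$, and hence $\{\mu_{n}\}$ is tight by the marginal criterion recorded just before Lemma 2.1. With tightness in hand, Prokhorov's theorem ensures that every subsequence of $\{\mu_{n}\}$ has a further subsequence converging weakly to a probability measure. If $\mu_{n_{k}}\Rightarrow\sigma$ and $\mu_{m_{j}}\Rightarrow\tau$ are two such limits, then the already-proved necessity gives $R_{\sigma}=R=R_{\tau}$ on a common product domain, so $\sigma=\tau$ by Proposition 2.5. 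Thus all subsequential limits coincide with a single probability measure $\mu$, the whole sequence satisfies $\mu_{n}\Rightarrow\mu$, and $R=R_{\mu}$ by necessity.

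The main obstacle is the tightness step in the sufficiency direction: condition (3) is a statement about the $R$-transform, and converting it into uniform control of the tails of $\mu_{n}$ requires passing through the identity $G_{\mu_{n}^{(1)}}(1/z+R_{\mu_{n}^{(1)}}(z))=z$ to land on the Cauchy transform, and then running a nontangential analogue of the $\int s^{2}/(m^{2}+s^{2})\,d\mu^{(1)}$ estimate used in Proposition 2.2 (the nontangentiality of $w_{n}$ is exactly what must be controlled, which is why the smallness $y\,|R_{\mu_{n}^{(1)}}(-iy)|\le\varepsilon$ is needed rather than mere finiteness). Everything else is bookkeeping once Propositions 2.2, 2.3, 2.5 and the one-dimensional results of \cite{BerVoicu93} are available.
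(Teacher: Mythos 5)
Your proof is correct and follows essentially the same route as the paper's: necessity via the one-dimensional continuity theory of \cite{BerVoicu93} together with Proposition 2.2 (with the moving-argument convergence of $G_{\mu_n}$ handled by local equicontinuity), and sufficiency by using Lemma 2.4 to convert condition (3) into uniform smallness of $yR_{\mu_{n}^{(1)}}(-iy)$, deducing marginal (hence joint) tightness, and identifying all subsequential limits through Proposition 2.5. The only difference is cosmetic: where the paper simply cites \cite{BerVoicu93} for the step from uniform smallness of $yR_{\mu_{n}^{(1)}}(-iy)$ to tightness of $\{\mu_{n}^{(1)}\}$, you re-derive that one-dimensional fact inline via the inverse-function identity $G_{\mu_{n}^{(1)}}\bigl(1/z+R_{\mu_{n}^{(1)}}(z)\bigr)=z$ and a non-tangential version of the tail estimate from Proposition 2.2.
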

\begin{proof}
Suppose $\mu_{n}\Rightarrow\mu$. Then we have the weak convergence
$\mu_{n}^{(j)}\Rightarrow\mu^{(j)}$ ($j=1,2$) for the marginal laws, because each projection $\pi_j$ is continuous.
By the continuity results for one-dimensional $R$-transform \cite{BerVoicu93}, this marginal weak convergence implies that there exists a Stolz
angle $\Delta$ such that all $R_{\mu_{n}^{(j)}}$ ($n\geq1$) and
$R_{\mu^{(j)}}$ are defined in $\Delta\cup\overline{\Delta}$, the
pointwise convergence $R_{\mu_{n}^{(j)}}\rightarrow R_{\mu^{(j)}}$
holds in $\Delta\cup\overline{\Delta}$ as $n\rightarrow\infty$,
and $\lambda R_{\mu_{n}^{(j)}}(\lambda)\rightarrow0$ uniformly in
$n$ as $\lambda\rightarrow0$ within the set $\Delta\cup\overline{\Delta}$.
The last uniform convergence result for $\lambda R_{\mu_{n}^{(j)}}(\lambda)$
amounts to $1/\lambda+R_{\mu_{n}^{(j)}}(\lambda)=(1/\lambda)(1+o(1))$
uniformly in $n$ as $\lambda\rightarrow0$, $\lambda\in\Delta\cup\overline{\Delta}$.
Thus, we conclude that $1/\lambda+R_{\mu_{n}^{(j)}}(\lambda)\rightarrow_{\leftslice}\infty$
uniformly in $n$ as $\lambda\rightarrow0$, $\lambda\in\Delta\cup\overline{\Delta}$. By shrinking the Stolz angle $\Delta$ if necessary but without changing the notation,  Proposition 2.2 (ii) and the definition (2.3) show that all $R_{\mu_n}$ are defined on the domain $\Omega=(\Delta\cup\overline{\Delta})\times(\Delta\cup\overline{\Delta})$, which is the statement (1). The statements (2) and (3) also follow from Proposition
2.2, with the limit function $R=R_{\mu}$.

Conversely, assume that (1) to (3) hold. The uniform limit condition
(3) implies that to each $\varepsilon>0$, there exists a small $\delta=\delta(\varepsilon)>0$
such that
\[
\left|R_{\mu_{n}}(-iy,-iv)\right|<\varepsilon,\qquad n\geq1,\quad0<y,v<\delta.
\]
By taking $v\rightarrow0$ and fixing $y$ in this inequality, Lemma
2.4 shows that $(-iy)R_{\mu_{n}^{(1)}}(-iy)\rightarrow0$ uniformly
in $n$ as $y\rightarrow0^{+}$. Therefore, again by the results in \cite{BerVoicu93}, the sequence $\{\mu_{n}^{(1)}\}_{n=1}^{\infty}$ is tight. In
the same way, we see that $\{\mu_{n}^{(2)}\}_{n=1}^{\infty}$ is also
a tight sequence. These two facts together imply the tightness of
$\{\mu_{n}\}_{n=1}^{\infty}$. By the first part of the proof and
Proposition 2.5, any weak limit $\mu$ of $\{\mu_{n}\}_{n=1}^{\infty}$
will be uniquely determined by the pointwise convergence condition
(2). Therefore, the full sequence $\mu_{n}$ must converge weakly
to $\mu$.
\end{proof}
Finally, we remark that the uniform condition (3) in the preceding result
is really the tightness of the sequence $\{\mu_{n}\}_{n=1}^{\infty}$
in disguise.

\section{limit theorems and infinite divisibility}

We now develop the theory of infinitely divisible $R$-transforms.
Consider an arbitrary sequence $\{\mu_{n}\}_{n=1}^{\infty}$ of probabilities
on $\mathbb{R}^{2}$, and let $k_{n}$ be any sequence of positive
integers tending to infinity. To motivate our discussion, assume
for the moment that each $\mu_{n}$ is compactly supported, then we
can view it as the common distribution for the finite sequence $(a_{n1},b_{n1}),(a_{n2},b_{n2}),\cdots,(a_{nk_{n}},b_{nk_{n}})$ of identically distributed bi-free two-faced pairs of commuting random variables. The theme of our investigation has to do with the following question:
\begin{problem}
What is the class of all possible distributional limits for the sum
\[
S_{n}=(a_{n1},b_{n1})+(a_{n2},b_{n2})+\cdots+(a_{nk_{n}},b_{nk_{n}})=(\textstyle \sum\nolimits_{j=1}^{k_n} a_{nj}, \textstyle \sum \nolimits_{j=1}^{k_n} b_{nj}),
\]
and what are the conditions for the law of $S_{n}$ to converge to
a specified limit distribution?
\end{problem}
Denote by $\nu_{n}$ the distribution of the sum $S_{n}$. Note that
Voiculescu's $R$-transform machinery shows that $R_{\nu_{n}}=k_{n}R_{\mu_{n}}$
in a bidisk centered at $(0,0)$. Several necessary conditions
for the convergence of $\{\nu_{n}\}_{n=1}^{\infty}$ are easy to derive.
First, if the sequence $\nu_{n}$ should converge weakly to a probability
law $\nu$ on $\mathbb{R}^{2}$, then the measures $\mu_{n}$ must
satisfy the following \emph{infinitesimality} condition: $\mu_{n}\Rightarrow\delta_{(0,0)}$.
Indeed, Proposition 2.6 shows that there is a universal domain of definition $\Omega$ for all $R_{\nu_n}$ (and hence for all $R_{\mu_n}$) such that $R_{\mu_{n}}=R_{\nu_{n}}/k_{n}=(R_{\nu}+o(1))\cdot o(1)$
as $n\rightarrow\infty$ in $\Omega$ and $R_{\mu_{n}}(-iy,-iv)=R_{\nu_{n}}(-iy,-iv)/k_{n}=o(1)$
uniformly in $n$ as $y,v\rightarrow 0^{+}$. So, we have $\mu_{n}\Rightarrow\delta_{(0,0)}$.
Of course, this also yields $\mu_{n}^{(j)}=\mu_{n}\circ\pi_{j}^{-1}\Rightarrow\delta_{0}$
($j=1,2$) for the marginal laws.
In fact, the converse of this is also true, that is, if the marginal
infinitesimality $\mu_{n}^{(j)}\Rightarrow\delta_{0}$ holds for $j=1,2$,
then one has $\mu_{n}\Rightarrow\delta_{(0,0)}$.

Secondly, to each $j$, we observe the weak convergence $\nu_{n}^{(j)}\Rightarrow\nu^{(j)}$
for the marginal laws, and by Lemma 2.4, we have
\begin{equation}
\nu_{n}^{(j)}=\mu_{n}^{(j)}\boxplus\mu_{n}^{(j)}\boxplus\cdots\boxplus\mu_{n}^{(j)}\qquad(k_{n}\;\text{times}),\label{eq:3.1}
\end{equation}
where $\boxplus$ is the usual free convolution for measures on $\mathbb{R}$.
By the Bercovici-Pata bijection \cite{BerPata99}, this means that each marginal
limit law $\nu^{(j)}$ must be $\boxplus$-infinitely divisible. On
the other hand, by applying the one-dimensional $R$-transform to
the weak convergence $\nu_{n}^{(j)}\Rightarrow\nu^{(j)}$, we get
$\mu_{n}^{(j)}\Rightarrow\delta_{0}$.

Thus, assuming the weak convergence of the marginals $\mu_{n}^{(j)}$ under
free convolution, the key to the solution of Problem 3.1 is Theorem
3.2 below, in which the measures $\mu_{n}$ are no longer assumed
to be compactly supported.

In order to prove Theorem 3.2, we now review the limit theorems proved
in \cite{BerPata99} for free convolution $\boxplus$ on $\mathbb{R}$.
The one-dimensional $R$-transform of an $\boxplus$-infinitely divisible law
$\nu$ on $\mathbb{R}$ admits a \emph{free L\'{e}vy-Khintchine representation}:
\[
R_{\nu}(z)=\gamma+\int_{\mathbb{R}}\frac{z+x}{1-zx}\, d\sigma(x),\qquad z\notin \mathbb{R},
\]
where $\gamma\in\mathbb{R}$ and $\sigma$ is a finite Borel measure
on $\mathbb{R}$ (called \emph{L\'{e}vy parameters}). The pair $(\gamma,\sigma)$
is unique. Conversely, given L\'{e}vy parameters $\gamma$ and $\sigma$,
this integral formula determines a unique $\boxplus$-infinitely divisible
law $\nu$ on $\mathbb{R}$. We shall write $\nu=\nu_{\boxplus}^{\gamma,\sigma}$
to indicate this correspondence. In order for the free convolutions
(\ref{eq:3.1}) to converge weakly to
$\nu_{\boxplus}^{\gamma,\sigma}$ on $\mathbb{R}$, it is necessary
and sufficient that the limit
\[
\lim_{n\rightarrow\infty}\int_{\mathbb{R}}\frac{k_{n}x}{1+x^{2}}\, d\mu_{n}^{(j)}(x)=\gamma
\]
and the one-dimensional weak convergence
\begin{equation}
\frac{k_{n}x^{2}}{1+x^{2}}\, d\mu_{n}^{(j)}(x)\Rightarrow\sigma\label{eq:3.2}
\end{equation}
hold simultaneously.
\begin{thm}
Let $\{\mu_{n}\}_{n=1}^{\infty}$ be a sequence of probability measures
on $\mathbb{R}^{2}$, and let $k_{n}$ be a sequence of positive integers
such that $\lim_{n\rightarrow\infty}k_{n}=\infty$. For $j=1,2$,
assume that $\nu_{jn}=[\mu_{n}^{(j)}]^{\boxplus k_{n}}$ converges
weakly to $\nu_{\boxplus}^{\gamma_{j},\sigma_{j}}$, the $\boxplus$-infinitely
divisible law determined by L\'{e}vy parameters $(\gamma_{j},\sigma_{j})$.
Then the following statements are equivalent:
\begin{enumerate}[$\qquad(1)$]
\item The pointwise limit $R(z,w)=\lim_{n\rightarrow\infty}k_{n}R_{\mu_{n}}(z,w)$
exists for $(z,w)$ in a domain $\Omega$ where all $R_{\mu_{n}}$
are defined.
\item The pointwise limit
\[
D(z,w)=\lim_{n\rightarrow\infty}k_{n}\int_{\mathbb{R}^{2}}\frac{zwst}{(1-zs)(1-wt)}\, d\mu_{n}(s,t)
\]
 exists for any $(z,w)\in(\mathbb{C}\setminus\mathbb{R})^{2}$.
\item The finite signed measures
\[
d\rho_{n}(s,t)=k_{n}\,\frac{st}{\sqrt{1+s^{2}}\sqrt{1+t^{2}}}\, d\mu_{n}(s,t)
\]
converge weakly to a finite signed measure $\rho$ on $\mathbb{R}^{2}$.
\end{enumerate}

Moreover, if \emph{(1)}, \emph{(2)}, and \emph{(3)} hold, then the
limit function $D$ has a unique integral representation
\[
D(z,w)=\int_{\mathbb{R}^{2}}\frac{zw\sqrt{1+s^{2}}\sqrt{1+t^{2}}}{(1-zs)(1-wt)}\, d\rho(s,t),
\]
and we have
\[
R(z,w)=zR_{\nu_{\boxplus}^{\gamma_{1},\sigma_{1}}}(z)+wR_{\nu_{\boxplus}^{\gamma_{2},\sigma_{2}}}(w)+D(z,w).
\]
In particular, the limit $R(z,w)$ extends analytically to $(\mathbb{C}\setminus\mathbb{R})^{2}$.

\end{thm}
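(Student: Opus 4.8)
The plan is to reduce everything to the scalar continuity theory of \cite{BerPata99, BerVoicu93} together with the two-dimensional Cauchy-transform machinery of Section 2. Throughout, the standing hypothesis on the marginals (via the one-dimensional $R$-transform continuity theorem) supplies three facts: the infinitesimality $\mu_n\Rightarrow\delta_{(0,0)}$, the pointwise convergence $k_nR_{\mu_n^{(j)}}\to R_{\nu_\boxplus^{\gamma_j,\sigma_j}}$ on a common set $\Delta\cup\overline{\Delta}$, and the weak convergence $\tfrac{k_nx^2}{1+x^2}\,d\mu_n^{(j)}\Rightarrow\sigma_j$ with convergent (hence bounded and tight) total masses. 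I would first record, independently of (1)--(3), that the signed measures $\rho_n$ are relatively compact: Cauchy--Schwarz gives $|\rho_n|(\mathbb{R}^2)\le\prod_{j}(k_n\int_{\mathbb{R}} \tfrac{x^2}{1+x^2}\,d\mu_n^{(j)})^{1/2}$, which is bounded, and the same splitting over $\{|s|>m\}$ shows $\sup_n|\rho_n|(\mathbb{R}^2\setminus K_m)\to0$; by the criterion recalled before Lemma 2.1, $\{\rho_n\}$ is tight and uniformly bounded, hence relatively compact in the weak topology.

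Second, I would prove (2) $\Leftrightarrow$ (3). Writing $d\rho_n=k_n\tfrac{st}{\sqrt{1+s^2}\sqrt{1+t^2}}\,d\mu_n$ turns the defining integral into $k_n\int\frac{zwst}{(1-zs)(1-wt)}\,d\mu_n=\int K_{z,w}\,d\rho_n$ with the bounded continuous kernel $K_{z,w}(s,t)=\frac{zw\sqrt{1+s^2}\sqrt{1+t^2}}{(1-zs)(1-wt)}$. Hence (3) immediately yields (2) together with the asserted representation $D(z,w)=\int K_{z,w}\,d\rho$. For the converse I use relative compactness: any two weak limits $\rho,\rho'$ satisfy $\int K_{z,w}\,d\rho=\int K_{z,w}\,d\rho'$; setting $d\widetilde{\rho}=\sqrt{1+s^2}\sqrt{1+t^2}\,d\rho$ (a measure obeying the growth condition \eqref{eq: 2.1}) and using $\tfrac{z}{1-zs}=\tfrac{1}{1/z-s}$ identifies $\int K_{z,w}\,d\rho=G_{\widetilde{\rho}}(1/z,1/w)$, so the two Cauchy transforms agree on $(\mathbb{C}\setminus\mathbb{R})^2$; since the two-dimensional Cauchy transform determines its measure, $\widetilde{\rho}=\widetilde{\rho'}$ and hence $\rho=\rho'$. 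A relatively compact sequence with a single weak limit point converges, which is (3).

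Third, the heart is (1) $\Leftrightarrow$ (2), carried by an asymptotic identity on $\Omega$. With $Z_n=1/z+R_{\mu_n^{(1)}}(z)$ and $W_n=1/w+R_{\mu_n^{(2)}}(w)$, the scalar inversion relations $G_{\mu_n^{(1)}}(Z_n)=z$, $G_{\mu_n^{(2)}}(W_n)=w$ let me center the kernels: putting $u_n(s)=\tfrac{1}{Z_n-s}-z$ and $v_n(t)=\tfrac{1}{W_n-t}-w$ (each with vanishing $\mu_n$-integral) gives $h_{\mu_n}-1=\psi_n/(zw)$ with $\psi_n=\int u_nv_n\,d\mu_n$, whence $k_n[1-1/h_{\mu_n}]=k_n\psi_n/(zw+\psi_n)$. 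The key estimate is $k_n\psi_n=zwD_n+o(1)$: replacing $u_n,v_n$ by their limits $u^\infty(s)=\tfrac{z^2s}{1-zs}$, $v^\infty(t)=\tfrac{w^2t}{1-wt}$ produces exactly $k_n\int u^\infty v^\infty\,d\mu_n=zwD_n$, while the Cauchy-kernel perturbations obey $\|u_n-u^\infty\|_\infty=O(k_n^{-1})$ and $\|v_n-v^\infty\|_\infty=O(k_n^{-1})$ uniformly in $s,t$ (because $|Z_n-s|$ and $|1/z-s|$ are bounded below on $\mathbb{R}$), and $\int|v_n|\,d\mu_n$ and $\int|u^\infty|\,d\mu_n$ tend to $0$; pairing an $O(k_n^{-1})$ factor with an $L^1(\mu_n)$-null factor kills the cross terms after multiplication by $k_n$. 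Since the marginal pieces $zk_nR_{\mu_n^{(1)}}(z)+wk_nR_{\mu_n^{(2)}}(w)$ converge by hypothesis and $\psi_n\to0$, solving the displayed identity shows that $k_nR_{\mu_n}(z,w)$ converges on $\Omega$ if and only if $D_n(z,w)$ does, and yields $R(z,w)=zR_{\nu_\boxplus^{\gamma_1,\sigma_1}}(z)+wR_{\nu_\boxplus^{\gamma_2,\sigma_2}}(w)+D(z,w)$. Finally, (1) gives convergence of $D_n$ only on the open set $\Omega\subset(\mathbb{C}\setminus\mathbb{R})^2$; but by relative compactness and the identity $D_n(z,w)\to G_{\widetilde{\rho}}(1/z,1/w)$, equality on $\Omega$ forces (via the multidimensional Identity Theorem, as in Proposition 2.2) a unique weak limit for $\{\rho_n\}$, i.e.\ (3), which upgrades to (2) on all of $(\mathbb{C}\setminus\mathbb{R})^2$; the analytic extension of $R$ then follows from that of $D$.

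The main obstacle is exactly the estimate $k_n\psi_n=zwD_n+o(1)$. Because the $\mu_n$ need not be compactly supported, one cannot bound the error by supremum norms alone; the argument must exploit that the kernel perturbation is uniformly $O(k_n^{-1})$ \emph{on the whole line} while the companion factor is $o(1)$ in $L^1(\mu_n)$, and the latter decay rests on quantitative infinitesimality $\mu_n^{(j)}(\{|x|>\varepsilon\})=O(k_n^{-1})$ distilled from the weighted weak convergence $\tfrac{k_nx^2}{1+x^2}\,d\mu_n^{(j)}\Rightarrow\sigma_j$. Keeping these two competing scales balanced---uniform smallness of the perturbation against tail decay of the measures---is what makes the passage from the scalar theory to the genuinely two-variable term $D$ delicate.
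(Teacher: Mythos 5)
Your proposal is correct, and its overall architecture coincides with the paper's: both reduce statement (1) to the asymptotic identity $k_{n}\left[h_{\mu_{n}}(z,w)-1\right]=D_{n}(z,w)+o(1)$ on $\Omega$, both obtain boundedness and tightness of $\{\rho_{n}\}_{n=1}^{\infty}$ from the Cauchy--Schwarz inequality against the weighted marginal measures, both identify weak limit points through the Cauchy transform of $\sqrt{1+s^{2}}\sqrt{1+t^{2}}\,d\rho$ together with the Identity Theorem, and both close the loop by noting that the kernel $\frac{zw\sqrt{1+s^{2}}\sqrt{1+t^{2}}}{(1-zs)(1-wt)}$ is bounded and continuous, so that (3) upgrades (2) to all of $(\mathbb{C}\setminus\mathbb{R})^{2}$ (your reordering of these steps is cosmetic). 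Where you genuinely differ is the mechanism behind the key identity. The paper expands $h_{\mu_{n}}-1$ via the algebraic identity $\frac{1}{1-zs+zR_{1n}(z)}=\frac{1}{1-zs}\bigl[1-\frac{R_{1n}(z)}{G_{1n}^{-1}(z)-s}\bigr]$ into four integrals, and the marginal contributions cancel only \emph{asymptotically}, through the relation $\int\frac{s}{1-zs}\,d\mu_{n}^{(1)}(s)=R_{1n}(z)(1+o(1))$ imported from \cite{BerPata99}. You instead center the kernels: since $G_{\mu_{n}^{(1)}}(Z_{n})=z$ and $G_{\mu_{n}^{(2)}}(W_{n})=w$ hold exactly on $\Omega$, your $u_{n}$ and $v_{n}$ have vanishing $\mu_{n}$-means, the cross terms disappear \emph{identically}, and $h_{\mu_{n}}-1$ collapses to the single bilinear term $\psi_{n}/(zw)$; the comparison $k_{n}\psi_{n}=zwD_{n}+o(1)$ then follows from pairing the sup-norm bounds $\|u_{n}-u^{\infty}\|_{\infty}=O(k_{n}^{-1})$ (legitimate, since $k_{n}R_{1n}(z)=R_{\nu_{1n}}(z)$ converges and $|Z_{n}-s|\geq|\Im(1/z)|/2$ for large $n$) with the $L^{1}(\mu_{n})$-smallness of $|v_{n}|$ and $|u^{\infty}|$. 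This buys exact cancellation in place of the paper's asymptotic one and removes the dependence on the one-dimensional asymptotics of \cite{BerPata99}, at the cost of the perturbation bookkeeping. One small correction to your closing remark: the claim that the decay of $\int|v_{n}|\,d\mu_{n}$ "rests on quantitative infinitesimality $\mu_{n}^{(j)}(\{|x|>\varepsilon\})=O(k_{n}^{-1})$" overstates what is needed. In your own estimates the $O(k_{n}^{-1})$ factor always comes from the sup-norm perturbation, so the companion factor only needs to be $o(1)$ in $L^{1}(\mu_{n})$; that follows from plain infinitesimality $\mu_{n}^{(j)}\Rightarrow\delta_{0}$ by splitting the integral over $\{|t|\leq\varepsilon\}$ and $\{|t|>\varepsilon\}$ and using the uniform bound $|v_{n}|\leq|w|+2/|\Im(1/w)|$. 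The quantitative rate is in any case available from the weighted weak convergence \eqref{eq:3.2}, so nothing in your argument breaks; it is simply not the load-bearing ingredient.
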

\begin{proof}
We have seen that the infinitesimality of $\{\mu_{n}\}_{n=1}^{\infty}$
follows from the weak convergence of $\{\nu_{jn}\}_{n=1}^{\infty}$
($j=1,2$). Let $\Omega$ be the universal domain of definition for
all $R_{\mu_{n}}$, whose existence is guaranteed by Proposition 2.6.
By the definition \eqref{eq:2.3} of $R$-transform and the assumption $\nu_{jn}\Rightarrow\nu_{\boxplus}^{\gamma_{j},\sigma_{j}}$,
the pointwise limit $R(z,w)$ exists at $(z,w)\in\Omega$ if and only
if the limit $\lim_{n\rightarrow\infty}k_{n}[1-1/h_{\mu_{n}}(z,w)]$
does. Moreover, since $\lim_{n\rightarrow\infty}h_{\mu_{n}}(z,w)=1$
by Proposition 2.2, we conclude that the limit $R(z,w)$ exists if
and only if the limit
\begin{equation}
h(z,w)=\lim_{n\rightarrow\infty}k_{n}[h_{\mu_{n}}(z,w)-1]\label{eq:3.3}
\end{equation}
exists in $\Omega$, and in this case we will have
\[
R(z,w)=zR_{\nu_{\boxplus}^{\gamma_{1},\sigma_{1}}}(z)+wR_{\nu_{\boxplus}^{\gamma_{2},\sigma_{2}}}(w)+h(z,w).
\]

We first show that the limit $h(z,w)$ in (\ref{eq:3.3}) is
in fact equal to the limit $D(z,w)$. For simplicity, we set $G_{jn}^{-1}=G_{\mu_{n}^{(j)}}^{-1}$
and $R_{jn}=R_{\mu_{n}^{(j)}}$. According to the identities
\[
\frac{1}{1-zs+zR_{1n}(z)}=\frac{1}{1-zs}\left[1-\frac{R_{1n}(z)}{G_{1n}^{-1}(z)-s}\right]
\]
and
\[
\frac{1}{1-wt+wR_{2n}(w)}=\frac{1}{1-wt}\left[1-\frac{R_{2n}(w)}{G_{2n}^{-1}(w)-t}\right],
\]
we can write
\begin{eqnarray*}
h_{\mu_{n}}(z,w)-1 & = & -zR_{1n}(z)\int_{\mathbb{R}^{2}}\frac{1}{(1-zs)(1-wt)(zG_{1n}^{-1}(z)-zs)}\, d\mu_{n}(s,t)\\
 &  & -wR_{2n}(w)\int_{\mathbb{R}^{2}}\frac{1}{(1-zs)(1-wt)(wG_{2n}^{-1}(w)-wt)}\, d\mu_{n}(s,t)\\
 &  & +\int_{\mathbb{R}^{2}}\frac{zwR_{1n}(z)R_{2n}(w)}{(1-zs)(1-wt)(zG_{1n}^{-1}(z)-zs)(wG_{2n}^{-1}(w)-wt)}\, d\mu_{n}(s,t)\\
 &  & +\int_{\mathbb{R}^{2}}\frac{zs+wt-zwst}{(1-zs)(1-wt)}\, d\mu_{n}(s,t).
\end{eqnarray*}
As $n\rightarrow\infty$, we observe that $k_{n}R_{jn}=R_{\nu_{jn}}=R_{\nu_{\boxplus}^{\gamma_{j},\sigma_{j}}}+o(1)$,
$R_{jn}=o(1)$, and all the integrals above are of order $(1+o(1))$
except for the last one. Meanwhile, we have
\[
\int_{\mathbb{R}}\frac{s}{1-zs}\, d\mu_{n}^{(1)}(s)=R_{1n}(z)\cdot(1+o(1))
\]
and
\[
\int_{\mathbb{R}}\frac{t}{1-wt}\, d\mu_{n}^{(2)}(t)=R_{2n}(w)\cdot(1+o(1))
\]
as $n\rightarrow\infty$ (see \cite{BerPata99}), as well as the following
decomposition
\begin{eqnarray*}
\int_{\mathbb{R}^{2}}\frac{zs+wt-zwst}{(1-zs)(1-wt)}\, d\mu_{n}(s,t) & = & z\int_{\mathbb{R}}\frac{s}{1-zs}\, d\mu_{n}^{(1)}(s)+w\int_{\mathbb{R}}\frac{t}{1-wt}\, d\mu_{n}^{(2)}(t)\\
 &  & +\int_{\mathbb{R}^{2}}\frac{zwst}{(1-zs)(1-wt)}\, d\mu_{n}(s,t).
\end{eqnarray*}
We conclude from these findings that
\[
k_{n}[h_{\mu_{n}}(z,w)-1]=k_{n}\int_{\mathbb{R}^{2}}\frac{zwst}{(1-zs)(1-wt)}\, d\mu_{n}(s,t)+o(1)
\]
as $n\rightarrow\infty$. It follows that the limit $h(z,w)$ exists
if and only if the limit $D(z,w)$ does and $h(z,w)=D(z,w)$, as desired.
The equivalence between (1) and (2) is proved, at least for $(z,w)\in\Omega$.
At the end of this proof, we shall see that if the limit $D$ exists
in $\Omega$ then it also exists in the whole space $(\mathbb{C}\setminus\mathbb{R})^{2}$.

Next, we show that the existence of the limit $D$ in $\Omega$ implies
(3). Note that the total variation $\left|\rho_{n}\right|$ of the measure $\rho_{n}$
is given by
\[
d|\rho_{n}|(s,t)=k_{n}\,\frac{|st|}{\sqrt{1+s^{2}}\sqrt{1+t^{2}}}\, d\mu_{n}(s,t).
\] To each $n\geq1$, we introduce the positive measures
\[
d\sigma_{n}^{(1)}(s)=\frac{k_{n}s^{2}}{1+s^{2}}\, d\mu_{n}^{(1)}(s)\quad\text{and}\quad d\sigma_{n}^{(2)}(t)=\frac{k_{n}t^{2}}{1+t^{2}}\, d\mu_{n}^{(2)}(t),
\]
on $\mathbb{R}$ and note that the weak convergence condition (\ref{eq:3.2})
implies that both families $\{\sigma_{n}^{(j)}\}_{n=1}^{\infty}$
($j=1,2$) are tight and uniformly bounded in total variation norm.
Applying the Cauchy-Schwarz inequality to the measure $k_{n}d\mu_{n}$,
we obtain
\begin{eqnarray*}
[|\rho_{n}|(\mathbb{R}^{2})]^{2} & \leq & \int_{\mathbb{R}^{2}}\frac{s^{2}}{1+s^{2}}\, k_{n}d\mu_{n}(s,t)\cdot\int_{\mathbb{R}^{2}}\frac{t^{2}}{1+t^{2}}\, k_{n}d\mu_{n}(s,t)\\
 & = & \sigma_{n}^{(1)}(\mathbb{R})\sigma_{n}^{(2)}(\mathbb{R})\leq2\sigma_{1}(\mathbb{R})\sigma_{2}(\mathbb{R})<\infty.
\end{eqnarray*}
Hence, the family $\{\rho_{n}\}_{n=1}^{\infty}$ is bounded in total
variation norm.

On the other hand, for any closed square $K_{m}=\{(s,t):|s|\leq m,|t|\leq m\}$,
we have
\begin{equation}
[|\rho_{n}|(\mathbb{R}^{2}\setminus K_{m})]^{2}\leq\int_{\mathbb{R}^{2}\setminus K_{m}}\frac{k_{n}s^{2}}{1+s^{2}}\, d\mu_{n}(s,t)\cdot\int_{\mathbb{R}^{2}\setminus K_{m}}\frac{k_{n}t^{2}}{1+t^{2}}\, d\mu_{n}(s,t)\label{eq:3.4}
\end{equation}
by the Cauchy-Schwarz inequality again. Next, observe that
\begin{eqnarray*}
\int_{\{(s,t):|s|\leq m,|t|>m\}}\frac{k_{n}s^{2}}{1+s^{2}}\, d\mu_{n}(s,t) & = & \int_{\{(s,t):|s|\leq m,|t|>m\}}\frac{s^{2}(1+t^{2})}{(1+s^{2})t^{2}}\frac{k_{n}t^{2}}{1+t^{2}}\, d\mu_{n}(s,t)\\
 & \leq & (1+1/m^{2})\int_{\{(s,t):|t|>m\}}\frac{k_{n}t^{2}}{1+t^{2}}\, d\mu_{n}(s,t)\\
 & = & (1+1/m^{2})\sigma_{n}^{(2)}(\mathbb{R}\setminus[-m,m]).
\end{eqnarray*}
It follows that
\begin{eqnarray*}
\int_{\mathbb{R}^{2}\setminus K_{m}}\frac{k_{n}s^{2}}{1+s^{2}}\, d\mu_{n}(s,t) & = & \int_{\{(s,t):|s|>m\}}\frac{k_{n}s^{2}}{1+s^{2}}\, d\mu_{n}(s,t)\\
 &  & +\int_{\{(s,t):|s|\leq m,|t|>m\}}\frac{k_{n}s^{2}}{1+s^{2}}\, d\mu_{n}(s,t)\\
 & \leq & \sup_{n\geq1}\sigma_{n}^{(1)}(\mathbb{R}\setminus[-m,m])\\
 &  & +(1+1/m^{2})\sup_{n\geq1}\sigma_{n}^{(2)}(\mathbb{R}\setminus[-m,m])\\
 & \longrightarrow & 0
\end{eqnarray*}
uniformly in $n$ as $m\rightarrow\infty$, by tightness. Likewise, we also have
\[
\lim_{m\rightarrow\infty}\sup_{n\geq1}\int_{\mathbb{R}^{2}\setminus K_{m}}\frac{k_{n}t^{2}}{1+t^{2}}\, d\mu_{n}(s,t)=0.
\]
By virtue of (\ref{eq:3.4}), these uniform limits imply the tightness
of the signed measures $\{\rho_{n}\}_{n=1}^{\infty}$. Consequently,
the measures $\{\rho_{n}\}_{n=1}^{\infty}$ have weak limit points.

Now, suppose that $\rho$ and $\rho^{\prime}$ are both weak limits
for the sequence $\{\rho_{n}\}_{n=1}^{\infty}$ . We will argue that
$\rho=\rho^{\prime}$ and hence the entire sequence $\{\rho_{n}\}_{n=1}^{\infty}$
must converge weakly to $\rho$. Toward this end we examine the limit
$D$:
\[
D(z,w)=\lim_{n\rightarrow\infty}\int_{\mathbb{R}^{2}}\frac{\sqrt{1+s^{2}}\sqrt{1+t^{2}}}{(1/z-s)(1/w-t)}\, d\rho_{n}(s,t).
\]
We deduce from this identity that the signed measures $\sqrt{1+s^{2}}\sqrt{1+t^{2}}\, d\rho(s,t)$
and $\sqrt{1+s^{2}}\sqrt{1+t^{2}}\, d\rho^{\prime}(s,t)$ have the
same Cauchy transforms, first on the open set $\{(z,w):(1/z,1/w)\in\Omega\}$
and thus to everywhere by the analyticity of Cauchy transform. This
yields $\rho=\rho^{\prime}$, finishing the proof of (2) implying
(3) under the assumption that $D$ exists in $\Omega$. This
argument also shows the uniqueness of the integral representation
for the function $D$.

Finally, if (3) holds then the limit $D$ exists not only in $\Omega$
but also on the entire space $(\mathbb{C}\setminus\mathbb{R})^{2}$,
because
\[
\frac{zw\sqrt{1+s^{2}}\sqrt{1+t^{2}}}{(1-zs)(1-wt)}
\]
is always a bounded and continuous function in $(s,t)$, as long as
$z,w\notin\mathbb{R}$. This theorem is completely proved.
\end{proof}
Suppose that the sequences $\{\mu_{n}\}_{n=1}^{\infty}$ and $\{k_{n}\}_{n=1}^{\infty}$
have an additional property that
\[
k_{n}R_{\mu_{n}}=R_{\nu_{n}},\qquad n\geq1,
\]
for some probability law $\nu_{n}$ on $\mathbb{R}^{2}$. For example,
this occurs for any $k_{n}$ when $\mu_{n}$ is compactly supported,
and the resulting measure $\nu_{n}$ would be the $k_{n}$-th bi-free
convolution power of $\mu_{n}$. In this case it makes sense to investigate
the convergence of the laws $\{\nu_{n}\}_{n=1}^{\infty}$, and we
have the following result which provides an answer to the second part
of Problem 3.1. Recall that the signed measures $\{\rho_{n}\}_{n=1}^{\infty}$
are defined as in Theorem 3.2 (3).
\begin{cor}
\emph{(Convergence Criteria)} The sequence $\nu_{n}$ converges weakly
to a probability law on $\mathbb{R}^{2}$ if and only if the marginal
free convolutions $[\mu_{n}^{(j)}]^{\boxplus k_{n}}$ and the signed
measures $\rho_{n}$ converge weakly on $\mathbb{R}$ and $\mathbb{R}^{2}$,
respectively. Furthermore, if $\nu_{n}\Rightarrow\nu$, $[\mu_{n}^{(j)}]^{\boxplus k_{n}}\Rightarrow\nu_{\boxplus}^{\gamma_{j},\sigma_{j}}$
($j=1,2$), and $\rho_{n}\Rightarrow\rho$, then we have the marginal law
$\nu^{(j)}=\nu_{\boxplus}^{\gamma_{j},\sigma_{j}}$ for $j=1,2$,
and
\begin{equation}
G_{\nu}(z,w)\left[1-G_{_{\sqrt{1+s^{2}}\sqrt{1+t^{2}}\, d\rho}}(1/G_{\nu^{(1)}}(z),1/G_{\nu^{(2)}}(w))\right]=G_{\nu^{(1)}}(z)G_{\nu^{(2)}}(w)\label{eq:3.5}
\end{equation}
for $(z,w)\in(\mathbb{C}\setminus\mathbb{R})^{2}$.\end{cor}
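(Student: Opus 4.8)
The plan is to treat this corollary as the bridge between the general continuity theorem (Proposition 2.6) applied to the sequence $\{\nu_n\}$ and the three-way equivalence already established in Theorem 3.2, and then to convert the resulting $R$-transform formula into the Cauchy-transform identity \eqref{eq:3.5} via the functional-inverse relationship $G_{\nu^{(j)}}^{-1}(z)=1/z+R_{\nu^{(j)}}(z)$.

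First I would record the bookkeeping that makes Theorem 3.2 available. Because $R_{\nu_n}=k_nR_{\mu_n}$, applying Lemma 2.4 to $\nu_n$ gives $R_{\nu_n^{(j)}}=k_nR_{\mu_n^{(j)}}$, that is, $\nu_n^{(j)}=[\mu_n^{(j)}]^{\boxplus k_n}$ as in \eqref{eq:3.1}. Hence weak convergence of the marginal free convolutions $[\mu_n^{(j)}]^{\boxplus k_n}$ is literally weak convergence of $\{\nu_n^{(j)}\}$, and the Bercovici--Pata correspondence forces the limits to be $\boxplus$-infinitely divisible laws $\nu_{\boxplus}^{\gamma_j,\sigma_j}$; this places us exactly in the hypotheses of Theorem 3.2. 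In addition, the marginal convergence entails the infinitesimality of $\{\mu_n\}$, so Proposition 2.6 provides a common Stolz-angle domain $\Omega$ on which all $R_{\nu_n}=k_nR_{\mu_n}$ are defined.

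For the equivalence I would run both directions through Proposition 2.6. If $\nu_n\Rightarrow\nu$, continuity of the projections gives convergence of the marginals, which is the first half of the criterion; Proposition 2.6 also yields the pointwise limit $R_{\nu_n}=k_nR_{\mu_n}\to R_\nu$ on $\Omega$, which is condition (1) of Theorem 3.2, and the implication (1)$\Rightarrow$(3) there produces the weak limit $\rho_n\Rightarrow\rho$. Conversely, assume the marginals converge and $\rho_n\Rightarrow\rho$. The latter is condition (3) of Theorem 3.2, whose implication (3)$\Rightarrow$(1) delivers the pointwise limit $R=\lim_nk_nR_{\mu_n}$ on $\Omega$, i.e. condition (2) of Proposition 2.6 for $\{\nu_n\}$; condition (1) of Proposition 2.6 is the common domain already secured; and the uniform condition (3) of Proposition 2.6 is the tightness of $\{\nu_n\}$ in disguise, which I would deduce from the tightness of the weakly convergent marginals $\{\nu_n^{(j)}\}$ via the uniform one-dimensional estimate $(-iy)R_{\nu_n^{(j)}}(-iy)\to0$ together with the uniform non-tangential asymptotics \eqref{eq:2.2} that force $h_{\nu_n}\to1$. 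Proposition 2.6 then returns a weak limit $\nu$.

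Finally, for the identification I would match the two expressions for $R_\nu$. Since $\nu_n^{(j)}\Rightarrow\nu^{(j)}$ and simultaneously $\nu_n^{(j)}=[\mu_n^{(j)}]^{\boxplus k_n}\Rightarrow\nu_{\boxplus}^{\gamma_j,\sigma_j}$, uniqueness of weak limits gives $\nu^{(j)}=\nu_{\boxplus}^{\gamma_j,\sigma_j}$. Comparing the definition \eqref{eq:2.3} of $R_\nu$ with the formula $R(z,w)=zR_{\nu_{\boxplus}^{\gamma_1,\sigma_1}}(z)+wR_{\nu_{\boxplus}^{\gamma_2,\sigma_2}}(w)+D(z,w)$ from Theorem 3.2, the two marginal terms cancel and leave $1-1/h_\nu(z,w)=D(z,w)$. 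Setting $Z=1/z+R_{\nu^{(1)}}(z)$ and $W=1/w+R_{\nu^{(2)}}(w)$, so that $G_{\nu^{(1)}}(Z)=z$ and $G_{\nu^{(2)}}(W)=w$, and observing that the integral representation in Theorem 3.2 gives $D(z,w)=G_{\sqrt{1+s^2}\sqrt{1+t^2}\,d\rho}(1/z,1/w)$, the equation $G_\nu(Z,W)/(zw)=h_\nu(z,w)=1/(1-D(z,w))$ becomes precisely \eqref{eq:3.5} evaluated at $(Z,W)$. As in the proof of Proposition 2.5, the image of $\Delta$ under $\lambda\mapsto1/\lambda+R_{\nu^{(j)}}(\lambda)$ contains a truncated cone $\Gamma$, so \eqref{eq:3.5} holds on the open set $(\Gamma\cup\overline{\Gamma})\times(\Gamma\cup\overline{\Gamma})$ and then extends to all of $(\mathbb{C}\setminus\mathbb{R})^2$ by the analyticity of the Cauchy transforms appearing on both sides. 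I expect the one genuinely delicate point to be the verification of condition (3) of Proposition 2.6 in the converse direction---upgrading marginal tightness to the uniform vanishing of $R_{\nu_n}(-iy,-iv)$---with the change of variables and analytic continuation in \eqref{eq:3.5} a close second; everything else is bookkeeping layered on top of Theorem 3.2.
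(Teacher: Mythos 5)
Your proposal is correct and follows essentially the same route as the paper: identify $\nu_n^{(j)}=[\mu_n^{(j)}]^{\boxplus k_n}$ via Lemma 2.4, run both directions of the equivalence through Proposition 2.6 (with tightness of $\{\nu_n\}$ coming from the weakly convergent marginals) and the equivalence (1)$\Leftrightarrow$(3) of Theorem 3.2, and then obtain \eqref{eq:3.5} by substituting the integral representation of $R_\nu$ from Theorem 3.2 into the definition \eqref{eq:2.3} and changing variables via $z\mapsto 1/z+R_{\nu^{(1)}}(z)$, extending by analyticity. The only difference is that you spell out the change-of-variables computation and the analytic-continuation step that the paper leaves implicit in its final sentence.
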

\begin{proof}
Assume $\nu_{n}\Rightarrow\nu$ for some probability law $\nu$ on
$\mathbb{R}^{2}$. Since $\nu_{n}^{(j)}=[\mu_{n}^{(j)}]^{\boxplus k_{n}}$
by Lemma 2.4, we have $[\mu_{n}^{(j)}]^{\boxplus k_{n}}\Rightarrow\nu^{(j)}$.
It follows that the limit law $\nu^{(j)}$ is $\boxplus$-infinitely
divisible and $\mu_{n}\Rightarrow\delta_{(0,0)}$. The weak convergence
of the measures $\rho_{n}$ is then a consequence of the pointwise
convergence $R_{\nu_{n}}\rightarrow R_{\nu}$ by Theorem 3.2.

Conversely, assume the weak convergence of $\{[\mu_{n}^{(j)}]^{\boxplus k_{n}}\}_{n=1}^{\infty}$
and that of $\{\rho_{n}\}_{n=1}^{\infty}$. The marginal weak convergence
implies that $\{\nu_{n}\}_{n=1}^{\infty}$ is tight, and hence the
condition (3) of Proposition 2.6 holds for the sequence $\{R_{\nu_{n}}\}_{n=1}^{\infty}$.
To conclude, we only need to verify the pointwise convergence of $\{R_{\nu_{n}}\}_{n=1}^{\infty}$.
This property, however, is equivalent to the weak convergence of $\{\rho_{n}\}_{n=1}^{\infty}$
by Theorem 3.2. Therefore, $\{\nu_{n}\}_{n=1}^{\infty}$ is a weakly
convergent sequence.

Finally, the functional equation \eqref{eq:3.5} follows from the
fact that the $R$-transform of the limit $\nu$ has the integral
representation
\[
R_{\nu}(z,w)=zR_{\nu_{\boxplus}^{\gamma_{1},\sigma_{1}}}(z)+wR_{\nu_{\boxplus}^{\gamma_{2},\sigma_{2}}}(w)+G_{_{\sqrt{1+s^{2}}\sqrt{1+t^{2}}\, d\rho}}(1/z,1/w)
\]
for $z,w\notin\mathbb{R}$.
\end{proof}
Next, we present some examples in which the limit laws are constructed
via the central limit process or the Poisson type limit theorems.
We are mainly interested in probability measures that are \emph{full}
in the sense that they are not supported on a one-dimensional line
in $\mathbb{R}^{2}$.
\begin{example}
(Bi-free Gaussians) These are the limit laws $\nu$ with $\sigma_{1}=a\delta_{0}$,
$\sigma_{2}=b\delta_{0}$, and $\rho=c\delta_{(0,0)}$, where $a,b>0$
and $|c|\leq\sqrt{ab}$. The vector $(\gamma_{1},\gamma_{2})$ represents
the mean of the law $\nu$, and
\[
\left(\begin{array}{cc}
a & c\\
c & b
\end{array}\right)
\]
is the covariance matrix of $\nu$. The $R$-transform of $\nu$ is
given by
\[
R_{\nu}(z,w)=\gamma_{1}z+\gamma_{2}w+az^{2}+bw^{2}+czw.
\]
The marginal law $\nu^{(1)}$ is the semicircular law with mean $\gamma_{1}$
and variance $a$, and the law $\nu^{(2)}$ is the same with mean
$\gamma_{2}$ and variance $b$. The equation (\ref{eq:3.5}) shows
that the law $\nu$ is compactly supported and absolutely continuous
with respect to the Lebesgue measure $ds\,dt$ on $\mathbb{R}^{2}$,
whenever $|c|<\sqrt{ab}$. In the standardized case of $\gamma_{1}=\gamma_{2}=0$,
$a=b=1$, and $|c|<1$, the inversion formula described in Section 2.1 gives the
following density formula:
\[
d\nu=\frac{1-c^{2}}{2\pi^{2}}\frac{\sqrt{4-s^{2}}\sqrt{4-t^{2}}}{2(1-c^{2})^{2}-c(1+c^{2})st+2c^{2}(s^{2}+t^{2})}\, ds\, dt,
\]
where $s,t \in [-2,2]$. In particular, if the marginals are uncorrelated
(i.e., $c=0$), then we have $\nu=\mathcal{S}\otimes\mathcal{S}$,
the product of the standard semicircle law $\mathcal{S}$. The degenerate
case $|c|=1$ corresponds to a non-full probability measure concentrated
entirely on a straight line in the plane. Thus, such a degenerate
law is purely singular to the Lebesgue measure on $\mathbb{R}^{2}$.
The existence of bi-free Gaussian laws is provided by central limit
theorems. Indeed, given $a=b=1$ and a correlation coefficient $c\in[-1,1]$,
let $Z_{1}$ and $Z_{2}$ be two classically independent real-valued
random variables drawn from the same law $(1/2)\delta_{-1}+(1/2)\delta_{1}$,
and let $\mu_{n}$ be the distribution of the random vector
\[
(X_{n},Y_{n})=(\sqrt{(1+c)/2n}Z_{1}-\sqrt{(1-c)/2n}Z_{2},\sqrt{(1+c)/2n}Z_{1}+\sqrt{(1-c)/2n}Z_{2}).
\]
Since the normal domain of attraction of the standard Gaussian law
coincides with that of $\mathcal{S}$, the marginal weak convergence
$[\mu_{n}^{(j)}]^{\boxplus n}\Rightarrow\mathcal{S}$ holds for $j=1,2$.
On the other hand, the pointwise limit
\[
D(z,w)=\lim_{n\rightarrow\infty}n\, E\left[\frac{zwX_{n}Y_{n}}{(1-zX_{n})(1-wY_{n})}\right]=czw=G_{c\delta_{(0,0)}}(1/z,1/w)
\]
implies the weak convergence $\rho_{n}\Rightarrow c\delta_{(0,0)}$.
By Corollary 3.3, the measures $\nu_{n}$ converge to the bi-free
Gaussian with zero mean and covariance
\[
\left(\begin{array}{cc}
1 & c\\
c & 1
\end{array}\right).
\]
The general case of $a$ and $b$ follows from a simple rescaling argument.
\begin{example}
(Bi-free compound Poisson laws) Let $\mu\neq\delta_{(0,0)}$ be
a probability measure on $\mathbb{R}^{2}$ and let $\lambda>0$ be
a given parameter. Consider the sequence
\[
\mu_{n}=(1-\lambda/n)\delta_{(0,0)}+(\lambda/n)\mu,\qquad n\geq1,
\]
and the corresponding marginals $\mu_{n}^{(j)}=(1-\lambda/n)\delta_{0}+(\lambda/n)\mu^{(j)}$.
It is easy to see that the marginal free convolutions $[\mu_{n}^{(j)}]^{\boxplus n}$
converge weakly to the usual free compound Poisson law with the
L\'{e}vy parameters
\[
\gamma_{j}=\lambda\int_{\mathbb{R}}\frac{x}{1+x^{2}}\, d\mu^{(j)}(x),\quad d\sigma_{j}(x)=\frac{\lambda x^{2}}{1+x^{2}}\, d\mu^{(j)}(x),
\]
and that the signed measures $\rho_{n}$ converge weakly on $\mathbb{R}^{2}$
to
\[
\rho=\frac{\lambda st}{\sqrt{1+s^{2}}\sqrt{1+t^{2}}}\, d\mu(s,t).
\]
So, by Theorem 3.2, the limit $R=\lim_{n\rightarrow\infty}nR_{\mu_{n}}$
exists and has the integral representation $R(z,w)=\lambda\left[(1/zw)G_{\mu}(1/z,1/w)-1\right]$ or, equivalently,
\[
R(z,w)=-\lambda+\lambda\int_{\mathbb{R}^{2}}\frac{1}{(1-zs)(1-wt)}\, d\mu(s,t),\qquad z,w\notin\mathbb{R}.
\]
The last integral is in fact the bi-free $R$-transform of a unique probability distribution $\nu_{\lambda,\mu}$, called the \emph{bi-free compound
Poisson law} with rate $\lambda$ and jump distribution $\mu$, on
$\mathbb{R}^{2}$. (The \emph{bi-free Poisson law} with rate $\lambda$
is defined to be the law $\nu_{\lambda,\delta_{(1,1)}}$.) We now
verify the existence of $\nu_{\lambda,\mu}$ by the method of truncation.
For each positive integer $m$, let $f_{m}$ be a compactly supported,
continuous function such that $0\leq f_{m}\leq1$, $f_{m}(s,t)=1$
for $(s,t)\in K_{m}=\{(s,t):|s|\leq m,|t|\leq m\}$, and $f_{m}$
vanishes on the complement $\mathbb{R}^2\setminus K_{m+1}$. For sufficiently large $m$, we introduce the following truncation
\[
d\mu^{m}=c_{m}\,f_{m}\, d\mu,
\] where the normalization constant $c_{m}$ is chosen so that $\mu^{m}(\mathbb{R}^{2})=1$. We observe from an application of the dominated convergence
theorem that $c_{m}\rightarrow 1$ and the weak convergence $\mu^{m}\Rightarrow\mu$ on $\mathbb{R}^{2}$ as $m\rightarrow\infty$. After applying the above limiting process to each truncation
$\mu^{m}$, Corollary 3.3 provides a limit law $\nu_{\lambda,\mu^{m}}$
whose $R$-transform is given by
\[
R_{\nu_{\lambda,\mu^{m}}}(z,w)=\lambda\left[(1/zw)G_{\mu^{m}}(1/z,1/w)-1\right],\qquad z,w\notin\mathbb{R}.
\]
By Proposition 2.2, the weak convergence $\mu^{m}\Rightarrow\mu$
implies that $\{R_{\nu_{\lambda,\mu^{m}}}\}_{m=1}^{\infty}$ converges
pointwisely to the integral $R$ and $R_{\nu_{\lambda,\mu^{m}}}(-iy,-iv)\rightarrow0$
uniformly in $m$ as $y,v\rightarrow0^{+}$. Therefore, by Proposition
2.6, these conditions show further that the laws $\nu_{\lambda,\mu^{m}}$
converge weakly to a unique limit distribution $\nu_{\lambda,\mu}$
and $R=R_{\nu_{\lambda,\mu}}$, as desired.
\end{example}
\end{example}
We continue our investigation on the characterization of the limit
laws. Let $R=\lim_{n\rightarrow\infty}k_{n}R_{\mu_{n}}$ be the pointwise
limit in Theorem 3.2. Then for any integer $m\geq2$, the function
$R/m$ is the limit of $[k_{n}/m]R_{\mu_{n}}$ where $[x]$ indicates
the integral part of $x\in\mathbb{R}$. In other words, the limit
of $k_{n}R_{\mu_{n}}$ can always be decomposed into a sum of $m$
identical functions of the same kind. This nature of the limit $R$
can also be seen from its integral representation, say,
\[
R(z,w)/m=zR_{\nu_{\boxplus}^{\gamma_{1}/m,\sigma_{1}/m}}(z)+wR_{\nu_{\boxplus}^{\gamma_{2}/m,\sigma_{2}/m}}(w)+G_{_{\sqrt{1+s^{2}}\sqrt{1+t^{2}}\, d\rho/m}}(1/z,1/w),
\]
where the quintuple $(\gamma_{1},\gamma_{2},\sigma_{1},\sigma_{2},\rho)$
of numbers and measures are provided by Theorem 3.2. In the next result
we show that every limiting integral form of this kind is indeed a
bi-free $R$-transform.
\begin{thm}
Let $\{\mu_{n}\}_{n=1}^{\infty}$ and $\{k_{n}\}_{n=1}^{\infty}$
be two sequences of probability measures and positive integers satisfying
the hypotheses of \emph{Theorem 3.2}, and assume that the limit function
$ $$R=\lim_{n\rightarrow\infty}k_{n}R_{\mu_{n}}$ exists in a domain $\Omega$.
Then there exists a unique probability law $\nu$ on $\mathbb{R}^{2}$
such that $R=R_{\nu}$ on $\Omega$.\end{thm}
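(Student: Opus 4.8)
The plan is to treat uniqueness and existence separately. Uniqueness is immediate: if $R=R_{\nu}=R_{\nu'}$ for two probability laws on $\mathbb{R}^{2}$, then $\nu=\nu'$ by Proposition 2.5, so the entire task is to produce a single probability measure $\nu$ with $R_{\nu}=R$ on $\Omega$. For existence I would realize $\nu$ as the weak limit of the \emph{accompanying} bi-free compound Poisson laws built from the very data producing $R$. For each $n$, let $\nu_{n}:=\nu_{k_{n},\mu_{n}}$ be the bi-free compound Poisson law with rate $k_{n}$ and jump distribution $\mu_{n}$; this is a genuine probability measure on $\mathbb{R}^{2}$ by Example 3.5, whose truncation construction does not require $\mu_{n}$ to be compactly supported. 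By that example its $R$-transform is \emph{exactly}
\[
R_{\nu_{n}}(z,w)=k_{n}\left[\frac{1}{zw}G_{\mu_{n}}(1/z,1/w)-1\right]=k_{n}\int_{\mathbb{R}^{2}}\frac{zs+wt-zwst}{(1-zs)(1-wt)}\,d\mu_{n}(s,t),
\]
which is emphatically \emph{not} the function $k_{n}R_{\mu_{n}}$, yet I claim it shares its non-tangential limit.

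The decisive step is to show $R_{\nu_{n}}\to R$ pointwise on $\Omega$, and this is precisely the asymptotic analysis already performed inside the proof of Theorem 3.2. Decomposing the integrand as there,
\[
\int_{\mathbb{R}^{2}}\frac{zs+wt-zwst}{(1-zs)(1-wt)}\,d\mu_{n}=z\int_{\mathbb{R}}\frac{s}{1-zs}\,d\mu_{n}^{(1)}+w\int_{\mathbb{R}}\frac{t}{1-wt}\,d\mu_{n}^{(2)}+\int_{\mathbb{R}^{2}}\frac{zwst}{(1-zs)(1-wt)}\,d\mu_{n},
\]
and using $\int_{\mathbb{R}}\frac{s}{1-zs}\,d\mu_{n}^{(1)}=R_{\mu_{n}^{(1)}}(z)(1+o(1))$ together with $k_{n}R_{\mu_{n}^{(j)}}\to R_{\nu_{\boxplus}^{\gamma_{j},\sigma_{j}}}$ (the Bercovici--Pata marginal asymptotics) and $k_{n}\int\frac{zwst}{(1-zs)(1-wt)}\,d\mu_{n}\to D(z,w)$ (condition (2) of Theorem 3.2), one obtains
\[
\lim_{n\to\infty}R_{\nu_{n}}(z,w)=zR_{\nu_{\boxplus}^{\gamma_{1},\sigma_{1}}}(z)+wR_{\nu_{\boxplus}^{\gamma_{2},\sigma_{2}}}(w)+D(z,w)=R(z,w).
\]
This verifies condition (2) of Proposition 2.6 for the sequence $\{\nu_{n}\}$, with limit function $R$.

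It then remains to supply the common domain (condition (1) of Proposition 2.6) and the uniform infinitesimality (condition (3)). The marginal $\nu_{n}^{(j)}$ is the one-dimensional free compound Poisson law with rate $k_{n}$ and jump $\mu_{n}^{(j)}$, the accompanying law of $[\mu_{n}^{(j)}]^{\boxplus k_{n}}$; by \cite{BerPata99} it has the same weak limit $\nu_{\boxplus}^{\gamma_{j},\sigma_{j}}$. Feeding this marginal weak convergence into the one-dimensional continuity results \cite{BerVoicu93} and the first part of the proof of Proposition 2.6 produces a common Stolz angle $\Delta$, hence a common domain $\Omega$ on which every $R_{\nu_{n}}$ is defined (condition (1)), while the same marginal convergence makes $\{\nu_{n}\}$ tight, which by the remark following Proposition 2.6 is exactly condition (3). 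With (1)--(3) in hand, Proposition 2.6 yields a probability measure $\nu$ on $\mathbb{R}^{2}$ with $\nu_{n}\Rightarrow\nu$ and, by its ``moreover'' clause, $R_{\nu}=\lim_{n}R_{\nu_{n}}=R$ on $\Omega$, completing the proof. The step demanding the most care---and which I expect to be the main obstacle---is the limit $R_{\nu_{n}}\to R$: one must confirm that the accompanying compound Poisson transform, though unequal to $k_{n}R_{\mu_{n}}$, nevertheless has the same limit, and that the $o(1)$ estimates imported from Theorem 3.2 can be legitimately combined with the marginal free-probability asymptotics on the single fixed domain $\Omega$. Once this limit computation is pinned down, everything else reduces to the continuity machinery of Section 2.
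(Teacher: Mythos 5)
Your proposal is correct, and its skeleton is the same as the paper's own proof: uniqueness via Proposition 2.5, and existence by realizing $\nu$ as the weak limit of the accompanying bi-free compound Poisson laws $\nu_{k_{n},\mu_{n}}$, whose $R$-transforms are shown to converge to $R$ by exactly the decomposition you use (the paper calls this ``asymptotic Poissonization''), followed by an appeal to Proposition 2.6. Where you genuinely diverge is in how conditions (1) and (3) of Proposition 2.6 are verified. The paper lifts $\sigma_{1n}=k_{n}s^{2}/(1+s^{2})\,d\mu_{n}$ and $\sigma_{2n}=k_{n}t^{2}/(1+t^{2})\,d\mu_{n}$ to tight families of measures on $\mathbb{R}^{2}$, passes to weakly convergent subsequences, and then proves the uniform smallness $R_{\nu_{k_{n},\mu_{n}}}(-iy,-iv)\rightarrow0$ by explicit estimates on the three integral pieces (the $\gamma_{1n},\sigma_{1n}$ part, the $\gamma_{2n},\sigma_{2n}$ part, and the $\rho_{n}$ part) of the compound Poisson $R$-transform. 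You instead observe that the marginals of $\nu_{k_{n},\mu_{n}}$ are the one-dimensional accompanying free compound Poisson laws, whose L\'{e}vy parameters are precisely $\gamma_{jn}=k_{n}\int x/(1+x^{2})\,d\mu_{n}^{(j)}$ and $\sigma_{jn}$, so that the hypothesis $[\mu_{n}^{(j)}]^{\boxplus k_{n}}\Rightarrow\nu_{\boxplus}^{\gamma_{j},\sigma_{j}}$ and the Bercovici--Pata criteria force these marginals to converge to the same limits; marginal tightness then gives tightness on $\mathbb{R}^{2}$ by the equivalence recorded in Section 2.1, which yields both the common domain and condition (3). Your route is shorter and avoids both the two-dimensional lifting and the subsequence argument, at the cost of leaning on the one-dimensional accompanying-laws and continuity results of \cite{BerPata99,BerVoicu93}; the paper's estimates are self-contained and, as a by-product, produce the uniform smallness statements that are reused in the discussion surrounding Proposition 3.11. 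Both arguments are complete, and your closing caution about justifying $R_{\nu_{k_{n},\mu_{n}}}\rightarrow R$ is resolved exactly as you anticipate, by combining Theorem 3.2's equivalence of (1) and (2) with its ``moreover'' formula.
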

\begin{proof}
The uniqueness statement follows from Proposition 2.5; we shall prove
the existence of $\nu$. The idea of the proof is to consider a two-dimensional
``lifting'' of the integral representation of the limit $R$. First,
from the proof of Theorem 3.2 we see that both
\[
\left\{ \sigma_{1n}=\frac{k_{n}s^{2}}{1+s^{2}}\, d\mu_{n}(s,t):n\geq1\right\} \quad\text{and}\quad\left\{ \sigma_{2n}=\frac{k_{n}t^{2}}{1+t^{2}}\, d\mu_{n}(s,t):n\geq1\right\}
\]
are tight families of Borel measures on $\mathbb{R}^{2}$. By dropping
to subsequences but without changing the notations, we may and do
assume that they are both weakly convergent sequences of measures
on $\mathbb{R}^{2}$. We write
\[
\gamma_{1n}=\int_{\mathbb{R}^{2}}\frac{k_{n}s}{1+s^{2}}\, d\mu_{n}(s,t)\quad\text{and}\quad\gamma_{2n}=\int_{\mathbb{R}^{2}}\frac{k_{n}t}{1+t^{2}}\, d\mu_{n}(s,t)
\]
so that $\lim_{n\rightarrow\infty}\gamma_{jn}=\gamma_{j}$ for $j=1,2$.
Note that we have
\[
R_{\nu_{\boxplus}^{\gamma_{1},\sigma_{1}}}(z)=\lim_{n\rightarrow\infty}\left[\gamma_{1n}+\int_{\mathbb{R}^{2}}\frac{z+s}{1-zs}\, d\sigma_{1n}(s,t)\right]
\]
and a similar formula for the function $R_{\nu_{\boxplus}^{\gamma_{2},\sigma_{2}}}$.
Together with the weak convergence $\rho_{n}\Rightarrow\rho$, a straightforward
calculation leads to the following asymptotic Poissonization for the
limit $R$:
\begin{eqnarray*}
R(z,w) & = & zR_{\nu_{\boxplus}^{\gamma_{1},\sigma_{1}}}(z)+wR_{\nu_{\boxplus}^{\gamma_{2},\sigma_{2}}}(w)+\lim_{n\rightarrow\infty}k_{n}\int_{\mathbb{R}^{2}}\frac{zwst}{(1-zs)(1-wt)}\, d\mu_{n}(s,t)\\
 & = & \lim_{n\rightarrow\infty}k_{n}\left[(1/zw)G_{\mu_{n}}(1/z,1/w)-1\right]\\
 & = & \lim_{n\rightarrow\infty}R_{\nu_{k_{n},\mu_{n}}}(z,w)
\end{eqnarray*}
for $z,w\notin\mathbb{R}$.

The rest of the proof will be devoted to showing the tightness of
the compound Poisson laws $\{\nu_{k_{n},\mu_{n}}\}_{n=1}^{\infty}$;
that is, $R_{\nu_{k_{n},\mu_{n}}}(-iy,-iv)\rightarrow0$ uniformly
in $n$ as $y,v\rightarrow0^{+}$. Indeed, if this were true then
the limit $\nu$ of $\{\nu_{k_{n},\mu_{n}}\}_{n=1}^{\infty}$ would satisfy $R=R_{\nu}$ by Proposition 2.6, bringing us to the end of this
proof. To this purpose, consider first the estimates
\[
\left|\frac{(-iy)^{2}-iys}{1+iys}\right|\leq1,\qquad s\in\mathbb{R},
\]
and
\[
\left|\frac{-iy+s}{1+iys}\right|\leq\sqrt{2}\frac{y+\left|s\right|}{1+y\left|s\right|}\leq\sqrt{2}(1+T),\qquad\left|s\right|\leq T,
\]
for arbitrary $0<y\leq1$ and $T>0$. These observations imply that
\[
\sup_{n\geq1}\left|\int_{\mathbb{R}^{2}}\frac{(-iy)^{2}-iys}{1+iys}\, d\sigma_{1n}(s,t)\right|\leq\sqrt{2}(1+T)y+\sup_{n\geq1}\sigma_{1n}(\{(s,t):\left|s\right|>T\})
\]
for these $y$ and $T$. Since the tail-sums of the tight sequence
$\{\sigma_{1n}\}_{n=1}^{\infty}$ can be made uniformly small as we
wish and since the sequence $\{\gamma_{1n}\}_{n=1}^{\infty}$ is bounded,
we conclude that
\[
\lim_{y\rightarrow0^{+}}\sup_{n\geq1}\left|i\gamma_{1n}y+\int_{\mathbb{R}^{2}}\frac{(-iy)^{2}-iys}{1+iys}\, d\sigma_{1n}(s,t)\right|=0.
\]
Using the same method, it can be shown that
\[
\lim_{v\rightarrow0^{+}}\sup_{n\geq1}\left|i\gamma_{2n}v+\int_{\mathbb{R}^{2}}\frac{(-iv)^{2}-ivt}{1+ivt}\, d\sigma_{2n}(s,t)\right|=0
\]
and
\[
\lim_{y,v\rightarrow0^{+}}\sup_{n\geq1}\left|\int_{\mathbb{R}^{2}}\frac{(iy)(iv)\sqrt{1+s^{2}}\sqrt{1+t^{2}}}{(1+iys)(1+ivt)}\, d\rho_{n}(s,t)\right|=0.
\]
These uniform limits imply that $R_{\nu_{k_{n},\mu_{n}}}(-iy,-iv)$ tends to zero
uniformly in $n$ as $y,v\rightarrow0^{+}$, just as we expected. \end{proof}

Our results motivate the following definition.
\begin{defn}
A bi-free partial $R$-transform $R_{\nu}$ on a domain $\Omega$ is said to
be\emph{ $ $infinitely divisible} if for each positive integer $m\geq 2$, there exists
a probability law $\mu_{m}$ on $\mathbb{R}^{2}$ such that $R_{\nu}=mR_{\mu_{m}}$
in $\Omega$. In this case, the law $\nu$ is said to be \emph{bi-freely}
\emph{infinitely divisible}.
\end{defn}
\begin{example} (Product of $\boxplus$-infinitely divisible laws) All point masses are clearly bi-freely infinitely divisible. A less trivial way of constructing a bi-freely infinitely divisible law is to form the product measure of two $\boxplus$-infinitely divisible laws. Thus, given two sets of L\'{e}vy parameters $(\gamma_1,\sigma_1)$ and  $(\gamma_2,\sigma_2)$, the definition \eqref{eq:2.3} and the one-dimensional L\'{e}vy-Khintchine formula imply that the product measure \[\nu=\nu_{\boxplus}^{\gamma_1,\sigma_1}\otimes \nu_{\boxplus}^{\gamma_2,\sigma_2}\] on $\mathbb{R}^2$ has the bi-free $R$-transform \[R_{\nu}(z,w)=z\left[\gamma_{1}+\int_{\mathbb{R}}\frac{z+s}{1-zs}\, d\sigma_{1}(s)\right]+w\left[\gamma_{2}+\int_{\mathbb{R}}\frac{w+t}{1-wt}\, d\sigma_{2}(t)\right]\] for $(z,w)\in (\mathbb{C}\setminus \mathbb{R})^2$. It is obvious that $R_{\nu}/m$ is the bi-free $R$-transform of the corresponding product measure \[\nu_{\boxplus}^{\gamma_1/m,\sigma_1/m}\otimes \nu_{\boxplus}^{\gamma_2/m,\sigma_2/m}\] for all $m\geq 2$.
\end{example}

Theorem 3.6 shows that the class of limits for $k_{n}R_{\mu_{n}}$
is precisely the class of all infinitely divisible $R$-transforms.
In classical probability, this is the content of Khintchine's result
on the infinite divisibility in terms of Fourier transform.
\begin{thm}
\emph{(Khintchine Type Characterization for Infinite Divisibility)}
Let $\nu$ be a probability measure on $\mathbb{R}^{2}$, and let
$R_{\nu}$ be its $R$-transform defined on a domain $\Omega$. The
law $\nu$ is bi-freely infinitely divisible if and only if there
exist probability measures $\mu_{n}$ on $\mathbb{R}^{2}$ and unbounded
positive integers $k_{n}$ such that $[\mu_{n}^{(j)}]^{\boxplus k_{n}}\Rightarrow\nu^{(j)}$
($j=1,2$) and $R_{\nu}=\lim_{n\rightarrow\infty}k_{n}R_{\mu_{n}}$
in $\Omega$. \end{thm}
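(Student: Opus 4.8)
The plan is to read this statement as the precise synthesis of Theorem 3.6 and Definition 3.7: the former manufactures a bi-free $R$-transform as a limit $\lim_n k_n R_{\mu_n}$, while the latter records infinite divisibility as the possibility of extracting an $m$-th ``root'' $R_\nu = m R_{\mu_m}$ for every $m \geq 2$. Accordingly I would prove the two implications separately, expecting the forward (``only if'') direction to be essentially formal and the converse (``if'') direction to carry the genuine content, namely the decomposition already foreshadowed in the paragraph preceding Theorem 3.6.

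For the ``only if'' direction, assume $\nu$ is bi-freely infinitely divisible. The natural witnesses are $k_n = n$ together with the law $\mu_n$ supplied by Definition 3.7 at $m=n$ (and $\mu_1 = \nu$), so that $R_\nu = n R_{\mu_n}$ on $\Omega$. Then $k_n R_{\mu_n} = R_\nu$ is constant in $n$, whence $R_\nu = \lim_{n\to\infty} n R_{\mu_n}$ trivially. To obtain the marginal requirement I would apply Lemma 2.4(1) to $R_\nu = n R_{\mu_n}$, letting the second variable tend to $0$, to get $z R_{\nu^{(1)}}(z) = n z R_{\mu_n^{(1)}}(z)$ and hence $R_{\nu^{(1)}} = n R_{\mu_n^{(1)}}$ on $\Delta \cup \overline{\Delta}$; by uniqueness of the one-dimensional $R$-transform this forces $[\mu_n^{(1)}]^{\boxplus n} = \nu^{(1)}$, and symmetrically for the second marginal. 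In particular $[\mu_n^{(j)}]^{\boxplus k_n} = \nu^{(j)} \Rightarrow \nu^{(j)}$, completing this direction.

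For the ``if'' direction, suppose such $\mu_n$ and unbounded $k_n$ exist. The given marginal convergence $[\mu_n^{(j)}]^{\boxplus k_n} \Rightarrow \nu^{(j)}$ forces each $\nu^{(j)}$ to be $\boxplus$-infinitely divisible (via the Bercovici--Pata correspondence \cite{BerPata99}), say $\nu^{(j)} = \nu_\boxplus^{\gamma_j,\sigma_j}$, so the full hypotheses of Theorem 3.2 are in force with $R_\nu = \lim_n k_n R_{\mu_n}$. To verify Definition 3.7 I fix $m \geq 2$ and run Theorem 3.6 on the pair of sequences $\{\mu_n\}$ and $\{[k_n/m]\}$. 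Since $[k_n/m] \to \infty$ and $[k_n/m] R_{\mu_n} = ([k_n/m]/k_n)(k_n R_{\mu_n}) \to (1/m) R_\nu$ pointwise on $\Omega$, all that remains is to check that $[\mu_n^{(j)}]^{\boxplus [k_n/m]}$ converges weakly to a $\boxplus$-infinitely divisible law. Here I would invoke the one-dimensional L\'evy parameter criterion recalled before Theorem 3.2: the given convergence is equivalent to
\[
\lim_{n\to\infty}\int_{\mathbb{R}}\frac{k_n x}{1+x^2}\, d\mu_n^{(j)}(x) = \gamma_j \quad\text{and}\quad \frac{k_n x^2}{1+x^2}\, d\mu_n^{(j)}(x) \Rightarrow \sigma_j,
\]
and multiplying through by $[k_n/m]/k_n \to 1/m$ yields the same two conditions with $\gamma_j,\sigma_j$ replaced by $\gamma_j/m,\sigma_j/m$. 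Hence $[\mu_n^{(j)}]^{\boxplus [k_n/m]} \Rightarrow \nu_\boxplus^{\gamma_j/m,\sigma_j/m}$, and Theorem 3.6 delivers a probability law $\mu_m$ with $(1/m)R_\nu = R_{\mu_m}$ on $\Omega$, i.e.\ $R_\nu = m R_{\mu_m}$. As $m \geq 2$ was arbitrary, $\nu$ is bi-freely infinitely divisible.

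The main obstacle I anticipate is exactly the scaling step in the last paragraph: one must ensure that replacing $k_n$ by the integer part $[k_n/m]$ does not disturb the two limiting conditions. This is harmless because the integral $\int k_n x /(1+x^2)\, d\mu_n^{(j)}$ stays bounded and the measures $k_n x^2/(1+x^2)\, d\mu_n^{(j)}$ form a tight, norm-bounded family, so multiplying by the scalar $[k_n/m]/k_n$, which tends to $1/m$, cleanly rescales both limits, with the rounding error $[k_n/m]/k_n - 1/m = O(1/k_n)$ absorbed by these uniform bounds. Once this is in place the theorem follows entirely from Theorem 3.6 and the one-dimensional theory, requiring no further analysis of the two-variable transforms.
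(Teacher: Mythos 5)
Your proposal is correct and follows essentially the same route as the paper: the "only if" direction via Definition 3.7 with $k_n=n$ and Lemma 2.4 to identify $[\mu_n^{(j)}]^{\boxplus n}=\nu^{(j)}$, and the "if" direction by applying Theorem 3.6 to the sequences $\{\mu_n\}$ and $\{[k_n/m]\}$. The only difference is that you spell out the scaling of the one-dimensional L\'{e}vy parameter criterion needed to verify the hypotheses of Theorem 3.2 for the scaled sequence, a step the paper leaves implicit in its discussion preceding Theorem 3.6.
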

\begin{proof}
We have seen an explanation for the ``only if'' statement at the
beginning of this section. As for the ``if'' part, we have to show
that the limit $R_{\nu}/m=\lim_{n\rightarrow\infty}[k_{n}/m]R_{\mu_{n}}$
is a bi-free partial $R$-transform for each $m\geq2$. This, however, is precisely the content of Theorem 3.6.
\end{proof}
Therefore, Theorem 3.9 and Corollary 3.3 together provide a complete answer to Problem 3.1.

On the other hand, the proof of Theorem 3.6 shows that the infinitely divisible laws can be approximated by Poisson distributions. This perspective leads to another characterization of infinite divisibility in bi-free probability. To describe this result, we need  a preparatory lemma.
\begin{lem} \label{unique2}
Let $\tau$ and $\tau^{\prime}$ be two finite positive Borel measures
on $\mathbb{R}^2$ satisfying
\begin{equation}\label{relation}\frac{t^2}{1+t^{2}}\,d\tau(s,t)=\frac{t^2}{1+t^{2}}\,
d\tau^{\prime}(s,t)\end{equation} on the Borel $\sigma$-field of $\mathbb{R}^2$ and having the same marginal
\[\tau\circ\pi_1^{-1}=\tau^{\prime}\circ\pi_1^{-1}\] with respect to
the projection $\pi_1(s,t)=s$. Then we have  $\tau=\tau^{\prime}$.
\end{lem}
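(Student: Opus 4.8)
The plan is to exploit the fact that the weight $g(s,t)=t^{2}/(1+t^{2})$ is strictly positive off the horizontal line $L=\mathbb{R}\times\{0\}$ but vanishes identically on $L$. Accordingly, the hypothesis \eqref{relation} should pin down $\tau$ and $\tau'$ everywhere away from $L$, while the common marginal $\tau\circ\pi_1^{-1}=\tau'\circ\pi_1^{-1}$ will be used to recover their masses on the line $L$ itself, where \eqref{relation} gives no information.

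First I would show that $\tau$ and $\tau'$ agree on every Borel subset of $\{t\neq 0\}$. The obstruction is that $g$ is not bounded away from zero as $t\to 0$, so one cannot simply divide \eqref{relation} by $g$ globally. Instead, fix a Borel set $A\subseteq\{t\neq 0\}$ and set $A_n=A\cap\{(s,t):|t|\ge 1/n\}$. On $\{|t|\ge 1/n\}$ one has $g\ge 1/(n^{2}+1)>0$, so the function $\mathbf{1}_{A_n}/g$ is bounded and Borel. Testing the measure identity $g\,d\tau=g\,d\tau'$ against this bounded function gives $\tau(A_n)=\int(\mathbf{1}_{A_n}/g)\,g\,d\tau=\int(\mathbf{1}_{A_n}/g)\,g\,d\tau'=\tau'(A_n)$, using that $(\mathbf{1}_{A_n}/g)\,g=\mathbf{1}_{A_n}$ where $g>0$. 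Since $A_n\uparrow A$ as $n\to\infty$ (because $\{t\neq 0\}=\bigcup_n\{|t|\ge 1/n\}$), continuity from below yields $\tau(A)=\tau'(A)$. Hence $\tau$ and $\tau'$ restrict to the same measure on $\{t\neq 0\}$.

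Next I would recover the behaviour on $L$ using the finiteness of the measures together with the marginal hypothesis. For any Borel $B\subseteq\mathbb{R}$, split $B\times\mathbb{R}$ into its part on $L$ and its part off $L$ to write $\tau(B\times\mathbb{R})=\tau(B\times\{0\})+\tau\big((B\times\mathbb{R})\cap\{t\neq 0\}\big)$, and likewise for $\tau'$. The marginal hypothesis gives $\tau(B\times\mathbb{R})=\tau'(B\times\mathbb{R})$, and the previous step gives equality of the off-line terms; since all the quantities involved are finite, subtraction yields $\tau(B\times\{0\})=\tau'(B\times\{0\})$. As every Borel subset of $L$ has the form $B\times\{0\}$ for a Borel $B\subseteq\mathbb{R}$, the restrictions of $\tau$ and $\tau'$ to $L$ coincide.

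Finally I would combine the two cases: an arbitrary Borel set $A\subseteq\mathbb{R}^{2}$ decomposes as $(A\cap\{t\neq 0\})\sqcup(A\cap L)$, and $\tau$ and $\tau'$ agree on each piece, so $\tau(A)=\tau'(A)$ and therefore $\tau=\tau'$. The only genuinely delicate point is the first step, namely coping with the degeneracy of the weight $g$ along $L$; the exhaustion by the sets $\{|t|\ge 1/n\}$ resolves it, and everything else is bookkeeping made legitimate by the finiteness of $\tau$ and $\tau'$.
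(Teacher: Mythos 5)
Your proof is correct and takes essentially the same approach as the paper's: the relation \eqref{relation} determines both measures on $\{t\neq 0\}$ (by dividing by the weight where it is bounded below and exhausting via $\{|t|\ge 1/n\}$), and the common first marginal recovers the mass on the line $\{t=0\}$ by subtraction, using finiteness. The only cosmetic difference is that you argue directly on arbitrary Borel sets, while the paper works with open rectangles (handling those touching the line by continuity of measure) and implicitly invokes uniqueness of finite measures agreeing on a generating $\pi$-system; both routes are sound.
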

\begin{proof}
It suffices to show that $\tau(E)=\tau^{\prime}(E)$ for any open
rectangle $E=I\times(a,b)$, where $I$ and $(a,b)$ are (bounded or
unbounded) open intervals in $\mathbb{R}$. If $a>0$ or $b<0$, then
the relation (\ref{relation}) shows that the desired identity holds.
If $a=0$, the continuity of $\tau$ and $\tau^{\prime}$ yields that
$\tau(E)=\lim_{\epsilon\to0^+}\tau(I\times(\epsilon,b))
=\lim_{\epsilon\to0^+}\tau^{\prime}(I\times(\epsilon,b))=\tau^{\prime}(E)$.
Similarly, the desired identity holds if $b=0$. On the other hand,
the assumption of admitting the same marginal yields that
$\tau((a,b)\times\mathbb{R})=\tau^{\prime}((a,b)\times\mathbb{R})$, from
which, along with the established result for the cases when
$(a,b)=(0,\infty)$ and $(a,b)=(-\infty,0)$, we see that
$\tau((a,b)\times\{0\})=\tau^{\prime}((a,b)\times\{0\})$. In general, if
$c<0<d$, then applying the above result to the sets
$(a,b)\times(0,d)$, $(a,b)\times\{0\}$, and $(a,b)\times(c,0)$ shows
the desired result. This finishes the proof.
\end{proof}

As seen in the proof of Theorem 3.6, the Poisson approximation $\nu_{k_n,\mu_n}\Rightarrow \nu$ holds along a subsequence of positive integers, thanks to the validity of the weak convergences \begin{equation}\label{eq:4.13}
\frac{k_{n}s^2}{1+s^{2}}\, d\mu_{n}(s,t)\Rightarrow \tau_1,\quad \frac{k_{n}t^2}{1+t^{2}}\, d\mu_{n}(s,t)\Rightarrow \tau_2,\end{equation} and \[\frac{k_{n}st}{\sqrt{1+s^{2}}\sqrt{1+t^{2}}}\, d\mu_{n}(s,t)\Rightarrow \rho\] along the same subsequence. (Of course, the last weak convergence is in fact a genuine limit rather than a subsequential one.)

It is easy to verify that the weak limits $\tau_1$ and $\rho$ must satisfy the identity \[
\frac{t}{\sqrt{1+t^{2}}}\, d\tau_{1}(s,t)=\frac{s}{\sqrt{1+s^{2}}}\, d\rho(s,t)\] on $\mathbb{R}^2$. In particular, this shows that the relationship (3.6) must hold for any weak limit points $\tau_1$ and $\tau_1^{\prime}$ of the sequence $k_{n}s^2/(1+s^{2})\, d\mu_{n}$. Also, the measures $\tau_1$ and $\tau_1^{\prime}$ have the same marginal law on the $s$-axis by the weak convergence \eqref{eq:3.2}. Thus, Lemma 3.10 implies that $\tau_1=\tau_{1}^{\prime}$. Similarly, it can be shown that the limit law $\tau_2$ is also unique. Therefore, the weak convergences \eqref{eq:4.13}, as well as the Poisson approximation to $\nu$, actually hold without passing to subsequences. Thus, we have obtained the following characterization for bi-free infinite divisibility.
\begin{prop}\emph{(Convergence of the accompanying Poisson laws)} Let $\nu$ be a probability measure on $\mathbb{R}^2$. Then $\nu$ is bi-freely infinitely divisible if and only if
there exist probability laws $\mu_n$ on $\mathbb{R}^2$ and unbounded positive integers $k_n$ such that the bi-free compound Poisson laws $\nu_{k_n,\mu_n}$ converge weakly to the measure $\nu$ on $\mathbb{R}^2$.\end{prop}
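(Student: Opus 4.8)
The plan is to treat the two implications separately, drawing on the machinery already in place. Consider first the \emph{only if} direction, so assume $\nu$ is bi-freely infinitely divisible. The Khintchine-type characterization (Theorem 3.9) then furnishes probability measures $\mu_n$ on $\mathbb{R}^2$ and unbounded positive integers $k_n$ with $[\mu_n^{(j)}]^{\boxplus k_n}\Rightarrow\nu^{(j)}$ for $j=1,2$ and $R_\nu=\lim_{n\to\infty}k_nR_{\mu_n}$ on a domain $\Omega$. Since each marginal limit $\nu^{(j)}$ is automatically $\boxplus$-infinitely divisible, these data satisfy the hypotheses of Theorem 3.2, and I would invoke the asymptotic Poissonization from the proof of Theorem 3.6, namely the identity $R_\nu(z,w)=\lim_{n\to\infty}R_{\nu_{k_n,\mu_n}}(z,w)$, together with the tightness of $\{\nu_{k_n,\mu_n}\}_{n=1}^\infty$ established there. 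By Proposition 2.6 these two facts upgrade to the weak convergence $\nu_{k_n,\mu_n}\Rightarrow\nu$. This is exactly the content assembled in the discussion preceding the statement, where the weak convergences \eqref{eq:4.13} and the Poisson approximation are first obtained along a subsequence and then shown to hold for the full sequence.

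For the \emph{if} direction, assume $\nu_{k_n,\mu_n}\Rightarrow\nu$ with $k_n\to\infty$. The crucial observation is that the bi-free compound Poisson law of rate $k_n$ factors through rate one: writing $\widetilde{\mu}_n=\nu_{1,\mu_n}$ for the (probability) bi-free compound Poisson law of rate one with jump distribution $\mu_n$ from Example 3.5, its $R$-transform satisfies $k_nR_{\widetilde{\mu}_n}=R_{\nu_{k_n,\mu_n}}$, because $R_{\nu_{\lambda,\mu_n}}(z,w)=\lambda[(1/zw)G_{\mu_n}(1/z,1/w)-1]$ is linear in $\lambda$. Thus $\widetilde{\mu}_n$ is a genuine probability measure realizing $\nu_{k_n,\mu_n}$ as the scaled object $k_nR_{\widetilde{\mu}_n}$. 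I would then verify the three hypotheses of the \emph{if} part of Theorem 3.9 with $\widetilde{\mu}_n$ in place of $\mu_n$: applying Proposition 2.6 to $\nu_{k_n,\mu_n}\Rightarrow\nu$ gives $k_nR_{\widetilde{\mu}_n}=R_{\nu_{k_n,\mu_n}}\to R_\nu$ pointwise on $\Omega$; passing to marginals via Lemma 2.4 identifies $\nu_{k_n,\mu_n}^{(j)}=[\widetilde{\mu}_n^{(j)}]^{\boxplus k_n}$, so the marginal convergence $\nu_{k_n,\mu_n}^{(j)}\Rightarrow\nu^{(j)}$ reads $[\widetilde{\mu}_n^{(j)}]^{\boxplus k_n}\Rightarrow\nu^{(j)}$; and $k_n\to\infty$ by assumption. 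Theorem 3.9 then yields that $\nu$ is bi-freely infinitely divisible.

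I expect the only genuine subtlety to lie in the \emph{only if} direction, in the passage from subsequential to full convergence. The proof of Theorem 3.6 produces the weak limits $\tau_1,\tau_2$ of $\sigma_{1n}$ and $\sigma_{2n}$ only after dropping to a subsequence, so to claim the compound Poisson approximation for the original sequence I must first pin those limits down uniquely. This is precisely where Lemma 3.10 enters: the compatibility relation $\frac{t}{\sqrt{1+t^2}}\,d\tau_1(s,t)=\frac{s}{\sqrt{1+s^2}}\,d\rho(s,t)$, valid with the genuine limit $\rho$, forces any two subsequential limits $\tau_1,\tau_1'$ of $k_ns^2/(1+s^2)\,d\mu_n$ to satisfy \eqref{relation}, and since they share the common marginal on the $s$-axis dictated by \eqref{eq:3.2}, Lemma 3.10 gives $\tau_1=\tau_1'$ (and symmetrically $\tau_2$ is unique). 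Hence no subsequence is needed. I note that for the bare existence statement of the proposition a subsequence would already suffice, so this uniqueness argument serves mainly to deliver the cleaner conclusion that the approximation holds along the original sequence; the \emph{if} direction, by contrast, is essentially formal once the rate-one factorization is observed, since it reduces directly to Theorem 3.9.
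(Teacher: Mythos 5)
Your proposal is correct, and the \emph{only if} direction coincides exactly with the paper's own argument: the paper likewise obtains the Poisson approximation along a subsequence from the proof of Theorem 3.6, observes that $\rho_n\Rightarrow\rho$ is a genuine limit, and then uses the compatibility relation $\frac{t}{\sqrt{1+t^2}}\,d\tau_1=\frac{s}{\sqrt{1+s^2}}\,d\rho$ together with the common $s$-axis marginal and Lemma 3.10 to pin down $\tau_1,\tau_2$ uniquely, so that the convergence holds for the full sequence. The only point where you go beyond the text is the \emph{if} direction, which the paper leaves implicit: your rate-one factorization $R_{\nu_{k_n,\mu_n}}=k_nR_{\nu_{1,\mu_n}}$ (valid since $R_{\nu_{\lambda,\mu}}$ is linear in $\lambda$), combined with Proposition 2.6, Lemma 2.4 to identify $\nu_{k_n,\mu_n}^{(j)}=[\nu_{1,\mu_n}^{(j)}]^{\boxplus k_n}$, and Theorem 3.9, is a clean and complete way to supply that half, and it stays entirely within the paper's machinery.
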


We would like to conclude this section by pointing out some interesting
properties of $\mathcal{BID}$, the class of bi-freely infinitely
divisible laws. Recall that the integral form of the $R$-transform
of a measure $\nu\in\mathcal{BID}$ is given by
\[
R_{\nu}(z,w)=zR_{\nu_{\boxplus}^{\gamma_{1},\sigma_{1}}}(z)+wR_{\nu_{\boxplus}^{\gamma_{2},\sigma_{2}}}(w)+G_{_{\sqrt{1+s^{2}}\sqrt{1+t^{2}}\, d\rho}}(1/z,1/w),
\]
where $\nu_{\boxplus}^{\gamma_{j},\sigma_{j}}$ ($j=1,2$) and $\rho$
are the limit laws as in (\ref{eq:3.2}) and Theorem 3.2 (3). The
quintuple $\Lambda(\nu)=(\gamma_{1},\gamma_{2},\sigma_{1},\sigma_{2},\rho)$
of limits is uniquely associated with the given infinitely divisible
law $\nu$, regardless of what approximants $\{\mu_{n}\}_{n=1}^{\infty}$
and $\{k_{n}\}_{n=1}^{\infty}$ may be used to obtain them. Being limits,
such quintuples are closed under componentwise addition and multiplication
by positive real numbers. This implies that the bi-free convolution
$\boxplus\boxplus$ can be extended from compactly supported probabilities
to bi-freely infinitely divisible laws; namely, for any $\nu_{1},\nu_{2}\in\mathcal{BID}$,
their bi-free convolution $\nu_{1}\boxplus\boxplus\nu_{2}$ can be defined
as the unique bi-freely infinitely divisible law satisfying
\[
\Lambda(\nu_{1}\boxplus\boxplus\nu_{2})=\Lambda(\nu_{1})+\Lambda(\nu_{2}).
\] We record this finding formally as
\begin{prop}
\emph{(Generalized bi-free convolution)} There exists an associative and commutative binary operation $\boxplus\boxplus: \mathcal{BID}\times  \mathcal{BID} \rightarrow  \mathcal{BID}$ such that for any $\nu_{1},\nu_{2}\in\mathcal{BID}$, the relationship \[R_{\nu_{1}\boxplus\boxplus\nu_{2}}=R_{\nu_1}+R_{\nu_2}\] holds
in $(\mathbb{C}\setminus\mathbb{R})^{2}$.
\end{prop}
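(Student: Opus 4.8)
The plan is to confirm that the operation introduced just before the statement, characterized by $\Lambda(\nu_1\boxplus\boxplus\nu_2)=\Lambda(\nu_1)+\Lambda(\nu_2)$, is well defined on $\mathcal{BID}$ and has the three required features. The structural fact underlying everything is that $\Lambda$ is injective on $\mathcal{BID}$: the marginal data $(\gamma_j,\sigma_j)$ are recovered from $R_\nu$ by Lemma 2.4 together with the uniqueness in the one-dimensional free L\'{e}vy--Khintchine formula, and $\rho$ is then recovered from the remaining Cauchy-transform term by the injectivity of the planar Cauchy transform; hence a law in $\mathcal{BID}$ is determined by its quintuple. Accordingly I must verify: (i) that $\Lambda(\nu_1)+\Lambda(\nu_2)$ is genuinely the quintuple of some law in $\mathcal{BID}$ (well-definedness, ensuring the value lies in $\mathcal{BID}$); (ii) the linearization $R_{\nu_1\boxplus\boxplus\nu_2}=R_{\nu_1}+R_{\nu_2}$; and (iii) associativity and commutativity.

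The crux is (i), the closure of admissible quintuples under addition. First I would use Proposition 3.11 to write each $\nu_i$ as a weak limit of bi-free compound Poisson laws, $\nu_{\lambda_n^{(i)},\mu_n^{(i)}}\Rightarrow\nu_i$. The key elementary identity, coming from $R_{\nu_{\lambda,\mu}}(z,w)=\lambda[(1/zw)G_{\mu}(1/z,1/w)-1]$ and the linearity of the Cauchy transform in the measure, is that a sum of two compound Poisson $R$-transforms is again one:
\[
R_{\nu_{\lambda_1,\mu_1}}+R_{\nu_{\lambda_2,\mu_2}}=R_{\nu_{\lambda_1+\lambda_2,\,\widetilde{\mu}}},\qquad \widetilde{\mu}=\tfrac{\lambda_1}{\lambda_1+\lambda_2}\mu_1+\tfrac{\lambda_2}{\lambda_1+\lambda_2}\mu_2.
\]
Applying this with $\lambda_i=\lambda_n^{(i)}$ and $\mu_i=\mu_n^{(i)}$ yields a single sequence of compound Poisson laws whose $R$-transforms converge pointwise to $R_{\nu_1}+R_{\nu_2}$. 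The uniform vanishing required by condition (3) of Proposition 2.6 holds because it is exactly the sum of the two individual uniform estimates, each of which is available from the tightness of the respective weakly convergent sequence. Proposition 2.6 then produces a weak limit $\nu$ of the combined laws with $R_\nu=R_{\nu_1}+R_{\nu_2}$, and $\nu\in\mathcal{BID}$ by Proposition 3.11. Since $R_\nu$ is determined by $\nu_1,\nu_2$ alone, the law $\nu$ does not depend on the chosen approximants, and the injectivity noted above forces $\Lambda(\nu)=\Lambda(\nu_1)+\Lambda(\nu_2)$. This substantiates the closure assertion made before the statement and simultaneously delivers (ii).

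Property (ii) is in any case transparent from the integral representation
\[
R_\nu(z,w)=zR_{\nu_{\boxplus}^{\gamma_1,\sigma_1}}(z)+wR_{\nu_{\boxplus}^{\gamma_2,\sigma_2}}(w)+G_{\sqrt{1+s^{2}}\sqrt{1+t^{2}}\, d\rho}(1/z,1/w),
\]
which is linear in the quintuple: $R_{\nu_{\boxplus}^{\gamma,\sigma}}(z)=\gamma+\int_{\mathbb{R}}\frac{z+x}{1-zx}\,d\sigma(x)$ is linear in $(\gamma,\sigma)$ and the last term is linear in $\rho$, with every summand defined on all of $(\mathbb{C}\setminus\mathbb{R})^2$. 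For (iii), componentwise addition of quintuples is commutative and associative, being ordinary addition of reals and of (signed) measures, and since $\Lambda$ is injective and intertwines $\boxplus\boxplus$ with this addition, both properties pass to the operation on laws. The step I expect to be the main obstacle is (i): the only way to know that the formal sum quintuple is the data of an honest probability measure is to realize it as a limit, and the one delicate point there is checking the uniform condition (3) for the combined compound Poisson sequence --- which the identity above reduces to two already-established uniform estimates.
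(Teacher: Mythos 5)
Your proposal is correct, and its skeleton coincides with the paper's: the operation is defined through additivity of the quintuple $\Lambda$, and injectivity of $\Lambda$ (equivalently, Proposition 2.5 applied to sums of $R$-transforms) yields uniqueness, the linearization, and associativity/commutativity. Where you genuinely add something is at the crux, the closure statement that $\Lambda(\nu_1)+\Lambda(\nu_2)$ is again the quintuple of an honest law in $\mathcal{BID}$: the paper dispatches this with the one-line remark that the quintuples, ``being limits,'' are closed under componentwise addition, without exhibiting any single approximating sequence that realizes the sum. You supply that sequence by merging the accompanying compound Poisson laws from Proposition 3.11 via the identity $R_{\nu_{\lambda_1,\mu_1}}+R_{\nu_{\lambda_2,\mu_2}}=R_{\nu_{\lambda_1+\lambda_2,\widetilde{\mu}}}$ with $\widetilde{\mu}$ the convex combination (linearity of the Cauchy transform in the measure), noting that the combined rates $k_n^{(1)}+k_n^{(2)}$ are again unbounded positive integers, that the pointwise convergence holds on a common product domain, and that condition (3) of Proposition 2.6 for the merged sequence is just the sum of the two individual uniform estimates; Proposition 2.6 then gives the limit law and Proposition 3.11 places it in $\mathcal{BID}$. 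This is exactly the circle of ideas behind the paper's proof of Theorem 3.6 (asymptotic Poissonization), so your argument is best viewed as the rigorous implementation that the paper's remark presupposes; what it buys is a self-contained verification of closure that never appeals to the Section 4 characterization of the range of $\Lambda$. The only points glossed over---passing to a common Stolz-angle domain for the two convergent sequences, and recovering $\Lambda(\nu)=\Lambda(\nu_1)+\Lambda(\nu_2)$ from the uniqueness of the integral representation---are minor and are handled by arguments already present in your write-up and in the proof of Theorem 3.2.
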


In addition, the map $\Lambda$ is injective and weakly continuous. The latter continuity means that if $\Lambda(\nu_{n})=(\gamma_{1n},\gamma_{2n},\sigma_{1n},\sigma_{2n},\rho_n)$, $\Lambda(\nu)=(\gamma_{1},\gamma_{2},\sigma_{1},\sigma_{2},\rho)$, and $\nu_{n}\Rightarrow \nu$, then we have $\gamma_{jn}\rightarrow \gamma_{j}$, $\sigma_{jn}\Rightarrow \sigma_{j}$, and $\rho_n\Rightarrow \rho$ as $n\rightarrow \infty$ for $j=1,2$. This can be easily verified using the free harmonic analysis results in \cite{BerVoicu93} and Proposition 2.3. 

However, the map $\Lambda$
is not surjective, and its actual range will be studied
in the next section. 

Given a law $\nu\in\mathcal{BID}$, Theorem 3.6 shows that for each
$t>0$, there exists a unique law $\nu_{t}\in\mathcal{BID}$ such
that
\[
R_{\nu_{t}}=\lim_{n\rightarrow\infty}[tk_{n}]R_{\mu_{n}}=t\, R_{\nu}
\]
in $(\mathbb{C}\setminus\mathbb{R})^{2}$, where $R_{\nu}=\lim_{n\rightarrow\infty}k_{n}R_{\mu_{n}}$
for some $\{\mu_{n}\}_{n=1}^{\infty}$ and $\{k_{n}\}_{n=1}^{\infty}$.
As we mentioned earlier, this construction of $\nu_{t}$ is independent
of the choice of the sequences $\{\mu_{n}\}_{n=1}^{\infty}$ and $\{k_{n}\}_{n=1}^{\infty}$.
Thus, setting $\nu_{0}=\delta_{(0,0)}$, we then have
\[
\Lambda(\nu_{t})=t\,\Lambda(\nu),\qquad t\geq0,
\]
and the resulting family $\{\nu_{t}\}_{t\geq0}$ forms a weakly continuous
semigroup of probabilities on $\mathbb{R}^{2}$ under the (generalized)
bi-free convolution $\boxplus\boxplus$ in the sense that
\[
\nu_{s+t}=\nu_{s}\boxplus\boxplus\nu_{t},\qquad s,t\geq0,
\]
and $\nu_{t}\Rightarrow \nu_{0}=\delta_{(0,0)}$ as $t\rightarrow 0^{+}$. Conversely, if $ $$\{\nu_{t}\}_{t\geq0}$ is a given weakly continuous
$\boxplus\boxplus$-semigroup of probabilities on $\mathbb{R}^{2}$, then
each $\nu_{t}$ is bi-freely infinitely divisible. In particular,
the law $\nu_{1}$ belongs to the class $\mathcal{BID}$ and generates
the entire process $\{\nu_{t}\}_{t\geq0}$ in terms of bi-free $R$-transform.
Since this connection between infinitely divisible laws and continuous
semigroups will play a role in the next section, we summarize our
discussions into the following
\begin{prop}
\emph{(Embedding Property)} Let $\nu$ be a probability measure on $\mathbb{R}^{2}$.
Then $\nu\in\mathcal{BID}$ if and only if there exists a weakly continuous
$\boxplus\boxplus$-semigroup $\{\nu_{t}\}_{t\geq0}$ of probability measures on $\mathbb{R}^{2}$ satisfying $\nu_{1}=\nu$ and $R_{\nu_{t}}=t\, R_{\nu}$
in $(\mathbb{C}\setminus\mathbb{R})^{2}$.
\end{prop}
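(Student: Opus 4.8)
The plan is to prove both implications by transferring the semigroup structure back and forth through the $R$-transform, relying on the linearization property and the continuity theorem from Section 2. The reverse implication is nearly immediate; the forward implication requires assembling the pieces already developed in the preceding discussion.

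For the \emph{if} direction, suppose a weakly continuous $\boxplus\boxplus$-semigroup $\{\nu_t\}_{t\geq 0}$ is given with $\nu_1=\nu$ and $R_{\nu_t}=t\,R_\nu$ on $(\mathbb{C}\setminus\mathbb{R})^2$. For each integer $m\geq 2$ I would simply set $\mu_m=\nu_{1/m}$, which is a probability law on $\mathbb{R}^2$ by hypothesis. Then the hypothesis gives $R_{\mu_m}=R_{\nu_{1/m}}=(1/m)R_\nu$, so that $R_\nu=m\,R_{\mu_m}$ on the domain $\Omega$ where $R_\nu$ is defined. By Definition 3.7 this exhibits $R_\nu$ as infinitely divisible, whence $\nu\in\mathcal{BID}$.

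For the \emph{only if} direction, start from $\nu\in\mathcal{BID}$ and invoke Theorem 3.9 to obtain approximants $\{\mu_n\}$ and unbounded $\{k_n\}$ with $[\mu_n^{(j)}]^{\boxplus k_n}\Rightarrow\nu^{(j)}$ and $R_\nu=\lim_n k_n R_{\mu_n}$ in $\Omega$. For fixed $t>0$ I would run Theorem 3.6 with the integer sequence $[tk_n]$ in place of $k_n$: since $[tk_n]/k_n\to t$, the scaled limit $\lim_n [tk_n]R_{\mu_n}=t\,R_\nu$ exists, and Theorem 3.6 then produces a unique probability law $\nu_t\in\mathcal{BID}$ with $R_{\nu_t}=t\,R_\nu$. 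Setting $\nu_0=\delta_{(0,0)}$ completes the family; here $\nu_1=\nu$ follows from Proposition 2.5 since $R_{\nu_1}=R_\nu$. The construction is independent of the chosen approximants because the quintuple is uniquely attached to the law, giving $\Lambda(\nu_t)=t\,\Lambda(\nu)$. It then remains to verify the two defining properties. The semigroup law is a direct consequence of linearization: for $s,t\geq 0$ one has $R_{\nu_{s+t}}=(s+t)R_\nu=R_{\nu_s}+R_{\nu_t}=R_{\nu_s\boxplus\boxplus\nu_t}$ by Proposition 3.12, and uniqueness of the $R$-transform (Proposition 2.5) forces $\nu_{s+t}=\nu_s\boxplus\boxplus\nu_t$. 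For weak continuity I would apply Proposition 2.6: fixing $t_0\geq 0$ and letting $t\to t_0$, the pointwise convergence $R_{\nu_t}=t\,R_\nu\to t_0\,R_\nu=R_{\nu_{t_0}}$ is immediate, while the uniform tightness condition (3) holds because $R_{\nu_t}(-iy,-iv)=t\,R_\nu(-iy,-iv)$ and, by Lemma 2.4(2), $R_\nu(-iy,-iv)\to 0$, so this tends to $0$ uniformly for $t$ in any bounded interval. Hence $\nu_t\Rightarrow\nu_{t_0}$, and in particular $\nu_t\Rightarrow\delta_{(0,0)}$ as $t\to 0^+$.

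I expect the genuinely routine steps (the \emph{if} direction and the algebraic semigroup identity) to cause no trouble. The only point demanding care is the verification that Theorem 3.6 applies verbatim after replacing $k_n$ by $[tk_n]$: one must check that the marginal free-convolution hypotheses of Theorem 3.2 survive this rescaling, so that $[\mu_n^{(j)}]^{\boxplus[tk_n]}$ still converges (to the law with L\'evy parameters $(t\gamma_j,t\sigma_j)$) and the quintuple scales exactly as $\Lambda(\nu_t)=t\,\Lambda(\nu)$. This, together with the uniform-in-$t$ tightness needed to feed Proposition 2.6, is where the main (though modest) effort lies.
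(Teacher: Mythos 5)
Your proof is correct and follows essentially the same route as the paper: the paper's own justification is the discussion immediately preceding the proposition, which likewise constructs $\nu_{t}$ by applying Theorem 3.6 to the rescaled sequence $[tk_{n}]R_{\mu_{n}}$, derives the semigroup law from $\Lambda(\nu_{t})=t\,\Lambda(\nu)$ together with Proposition 3.12, and obtains the converse directly from Definition 3.7 using $\mu_{m}=\nu_{1/m}$. Your explicit verification that the marginal hypotheses of Theorem 3.2 survive the replacement $k_{n}\mapsto[tk_{n}]$ and the check of weak continuity via Proposition 2.6 and Lemma 2.4 simply spell out steps the paper leaves implicit.
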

Finally, note that the marginals $\{\nu^{(j)}_{t}\}_{t\geq0}$ of a $\boxplus\boxplus$-semigroup $\{\nu_{t}\}_{t\geq0}$ are themselves a continuous semigroup relative to free convolution on $\mathbb{R}$, and we have \[R_{\nu^{(j)}_{t}}(z)=t\,R_{\nu^{(j)}}(z),\quad z\notin\mathbb{R},\] for their one-dimensional $R$-transforms. We recall from \cite{BerVoicu93} that the dynamics of the process $\{\nu^{(j)}_{t}\}_{t\geq0}$ is governed by the complex Burgers' type PDE:\[\partial_{t}G_{\nu^{(j)}_{t}}(z)+R_{\nu^{(j)}_{t}}\left(G_{\nu^{(j)}_{t}}(z)\right)\partial_{z}G_{\nu^{(j)}_{t}}(z)=0\] in the upper half-plane for $t\in[0,\infty)$, where the time-derivative of $G_{\nu^{(j)}_{t}}(z)$ at $t=0$ is equal to the one-dimensional $R$-transform $R_{\nu^{(j)}}(z)$.

\section{bi-free convolution semigroups and L\'{e}vy-Khintchine representations}
In classical probability, finding the L\'{e}vy-Khintchine representation
of a continuous convolution semigroup $\{\mu_{t}\}_{t\geq0}$ of probabilities
on $\mathbb{R}$ is related to the study of its generating functional
$A=\lim_{t\rightarrow0^{+}}(\mu_{t}-\delta_{0})/t$, which is defined
at least on the dual group of $(\mathbb{R},+)$ via
the Fourier transform $\widehat{\mu_{t}}=\exp(tA)$. Fix a law $\nu\in\mathcal{BID}$
and consider the $\boxplus\boxplus$-semigroup $\{\nu_{t}\}_{t\geq0}$
generated by $\nu$. In what follows, we shall study the bi-free $R$-transform
$R_{\nu}$ from an infinitesimal perspective and show that the map
$R_{\nu}$ arises as the time-derivative of the Cauchy transform
$G_{\nu_{t}}$ at time $t=0$. This approach gives rise to a canonical
integral representation for infinitely divisible $R$-transforms.

We start by assuming that $\nu$ is compactly supported, so that its $R$-transform $R_{\nu}$ can be written as an absolutely convergent power series with real coefficients \begin{equation}\label{eq:4.1}R_{\nu}(z,w)=\sum_{m,n \geq 0} \kappa_{m,n}\,z^{m}w^{n}\end{equation} in some bidisk $\Omega=\{(z,w)\in \mathbb{C}^2: |z|, |w|<r\}$. Note that $\kappa_{0,0}=0$, $zR_{\nu^{(1)}}(z)=R_{\nu}(z,0)$, and $wR_{\nu^{(2)}}(w)=R_{\nu}(0,w)$ by Lemma 2.4. So, both $R_{\nu^{(1)}}$ and $R_{\nu^{(2)}}$ also have power series expansions of their own. We conclude that the functions $R_{\nu^{(j)}}$ ($j=1,2$) and $R_{\nu}$ are all uniformly bounded in $\Omega$, and hence the semigroup property shows further that $R_{\nu_{t}^{(j)}}$ ($j=1,2$) and $R_{\nu_{t}}$ all tend to zero uniformly in $\Omega$ as the time parameter $t\rightarrow 0^{+}$. This fact and the definition \eqref{eq:2.3} of $R$-transform imply that there exists a cutoff constant $t_0 > 0$ such that the following identity
\begin{equation} \label{eq:4.2}
G\left(t,K_t^{(1)}(z),K_t^{(2)}(w)\right)
=\frac{zw}{1+tzR_1^{(1)}(z)+twR_1^{(2)}(w)- tR_\mu(z,w)}
\end{equation} holds for $0\leq t\leq t_0$ and $(z,w)\in \Omega^{*}=\Omega \setminus \{(0,0)\}$, using the notations \[G(t,z,w)=G_{\nu_{t}}(z,w), \quad K_t^{(1)}(z)=tR_{\nu^{(1)}}(z)+1/z, \quad K_t^{(2)}(w)=tR_{\nu^{(2)}}(w)+1/w.\] The formula \eqref{eq:4.2} shows that the map $G\left(t,K_t^{(1)}(z),K_t^{(2)}(w)\right)$ is a $C^{1}$-function in $t$ and is holomorphic in $(z,w)$ on the open set $(0,t_0)\times \Omega^{*}$.

Therefore, differentiating \eqref{eq:4.2} at any $t>0$ in the domain $\Omega^{*}$ yields
\begin{eqnarray}\label{eq:4.3}
\partial_{t} G\left(t,K_t^{(1)}(z),K_t^{(2)}(w)\right) & = & -R_{\nu^{(1)}}(z)\partial_{z}
G\left(t,K_t^{(1)}(z),K_t^{(2)}(w)\right)\\ \nonumber & & -R_{\nu^{(2)}}(w)\partial_{w}
G\left(t,K_t^{(1)}(z),K_t^{(2)}(w)\right)\\ \nonumber
 & & +\frac{zw\left(R_\nu(z,w)-zR_{\nu^{(1)}}(z)-wR_{\nu^{(2)}}(w)\right)}{
\left(1+tzR_{\nu^{(1)}}(z)+twR_{\nu^{(2)}}(w)-tR_\nu(z,w)\right)^2}.\end{eqnarray}
Meanwhile, the right-continuity $\nu_t \Rightarrow \delta_{(0,0)}$ and Proposition 2.2 imply that \begin{eqnarray*}
\lim_{t\rightarrow 0^{+}}\partial_{z}
G\left(t,K_t^{(1)}(z),K_t^{(2)}(w)\right) & = & \lim_{t\rightarrow 0^{+}}\int_{\mathbb{R}^{2}}\frac{-1}{(K_t^{(1)}(z)-x)^2(K_t^{(2)}(w)-y)}\,d\nu_t(x,y)\\ & = & -z^2w \end{eqnarray*} and \[\lim_{t\rightarrow 0^{+}}\partial_{w}
G\left(t,K_t^{(1)}(z),K_t^{(2)}(w)\right)=-zw^2.\] Combining these facts with an application of the mean value theorem, we find that for any fixed $(z,w)\in \Omega^{*}$ the function \[t \mapsto G\left(t,K_t^{(1)}(z),K_t^{(2)}(w)\right)\] is also right-differentiable at $t=0$ and this derivative can be further evaluated by taking $t\rightarrow 0^{+}$ in \eqref{eq:4.3}. Taking the fact $\lim_{t\rightarrow 0^{+}}K_t^{(j)}(\lambda)=1/\lambda$ ($j=1,2$) into account, we then obtain the following result.
\begin{prop}We have the right-derivative \[\lim_{t\rightarrow 0^{+}}\frac{G(t,1/z,1/w)-G(0,1/z,1/w)}{t}=zwR_{\nu}(z,w)\] for $(z,w)\in\Omega^{*}$.
\end{prop}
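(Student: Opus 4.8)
The plan is to recognize the difference quotient in the statement as the forward time-derivative, at $t=0$, of the Cauchy transform $G_{\nu_t}$ evaluated at the \emph{fixed} spatial point $(1/z,1/w)$, and then to extract this derivative from the identity \eqref{eq:4.3} already in hand. Since $\nu_0=\delta_{(0,0)}$, a direct computation gives $G(0,1/z,1/w)=zw$, so the numerator in the statement is $H(t)-H(0)$ with $H(t):=G(t,1/z,1/w)$, and the quantity to be computed is the right-derivative $H'(0^+)=\partial_t G(0,1/z,1/w)$.

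The crucial point is that the fixed evaluation point $(1/z,1/w)$ is exactly the value at $t=0$ of the moving curve $t\mapsto(K_t^{(1)}(z),K_t^{(2)}(w))$, thanks to $\lim_{t\to0^+}K_t^{(j)}(\lambda)=1/\lambda$. First I would use the mean value theorem, together with the joint continuity of $\partial_t G$ up to $t=0^+$, to show that $H$ is right-differentiable at $t=0$ and that $H'(0^+)$ equals the limit as $t\to0^+$ of $\partial_t G$ taken along the moving curve, that is, of the left-hand side of \eqref{eq:4.3}. I would then pass to the limit on the right-hand side of \eqref{eq:4.3}, using the two non-tangential limits $\partial_z G\to -z^2w$ and $\partial_w G\to -zw^2$ established above and the fact that the denominator $1+tzR_{\nu^{(1)}}(z)+twR_{\nu^{(2)}}(w)-tR_\nu(z,w)$ tends to $1$. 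This yields
\[
H'(0^+)=z^2wR_{\nu^{(1)}}(z)+zw^2R_{\nu^{(2)}}(w)+zw\bigl(R_\nu(z,w)-zR_{\nu^{(1)}}(z)-wR_{\nu^{(2)}}(w)\bigr),
\]
where the two pairs of cross terms cancel, leaving exactly $zwR_\nu(z,w)$, as claimed.

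The main obstacle is the analytic justification of the limit interchange at the endpoint $t=0$: the semigroup $\{\nu_t\}$ is only defined for $t\geq0$, so one has only a one-sided derivative, and \eqref{eq:4.3} a priori describes $\partial_t G$ along the moving curve rather than at the fixed point. The mean value theorem converts the difference quotient $[H(t)-H(0)]/t$ into a value $\partial_t G(\xi_t,1/z,1/w)$ at some intermediate time $\xi_t\in(0,t)$; letting $t\to0^+$ forces $\xi_t\to0^+$, and one needs $\partial_t G$ to extend continuously to $t=0^+$ with the same limit whether the spatial argument is held fixed at $(1/z,1/w)$ or slid along $(K_t^{(1)}(z),K_t^{(2)}(w))$. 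This continuity is precisely what the $C^1$-regularity recorded after \eqref{eq:4.2} and the weak continuity $\nu_t\Rightarrow\delta_{(0,0)}$ (via Proposition 2.2) are designed to supply; once it is in place, the remaining computation is the routine cancellation above.
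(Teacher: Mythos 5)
Your proposal follows the paper's own argument essentially verbatim: both differentiate the identity \eqref{eq:4.2} to obtain \eqref{eq:4.3}, evaluate the limits $\partial_z G \to -z^2w$ and $\partial_w G \to -zw^2$ along the moving curve via $\nu_t \Rightarrow \delta_{(0,0)}$ and Proposition 2.2, and then invoke the mean value theorem to identify the right-derivative at $t=0$, with the same final cancellation leaving $zwR_{\nu}(z,w)$. The endpoint-continuity subtlety you flag is treated in the paper with precisely the ingredients you name (the $C^1$-regularity following \eqref{eq:4.2} and $\lim_{t\to 0^+}K_t^{(j)}(\lambda)=1/\lambda$), so there is no substantive difference between the two arguments.
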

Thus, the distributional derivative $\lim_{t\rightarrow 0^{+}}(\nu_t-\nu_0)/t$ exists at the level of Cauchy transforms. To further analyze this derivative, we will require the tightness of the process $\nu_t$ in finite time. For this purpose, recall from \cite{GHM} that the infinitely divisible measure $\nu$ can be realized as the joint distribution of the \emph{commuting} bounded selfadjoint operators \[a=\ell(f)+\ell(f)^{*}+\Lambda_{\text{left}}(T_1)+\kappa_{1,0}I \quad \text{and} \quad b=r(g)+r(g)^{*}+\Lambda_{\text{right}}(T_2)+\kappa_{0,1}I\] on a certain full Fock space $\mathcal{F}\left(H\right)$ (with the vacuum state) by distinguishing the left action of the creation operator $\ell(f)$ and the gauge operator $\Lambda_{\text{left}}$ from the right action of the operators $r(g)$ and $\Lambda_{\text{right}}$ of the same nature. Here, the vectors $f,g$ in the Hilbert space $H$ and the commuting selfadjoint bounded operators $T_1$ and $T_2$ on $H$ are chosen according to the distribution $\nu$. Consider next the Hilbert space tensor product $L^{2}([0,\infty),dx) \otimes H$, then the realization of the  process $\nu_t$ is given by the two-faced pair $(a_t,b_t)$ of the form: \[a_t=\ell(\chi_{t} \otimes f)+\ell(\chi_{t} \otimes f)^{*}+\Lambda_{\text{left}}(M_t \otimes T_1)+\kappa_{1,0}I\] and \[b_t=r(\chi_{t} \otimes g)+r(\chi_{t} \otimes g)^{*}+\Lambda_{\text{right}}(M_t \otimes T_2)+\kappa_{0,1}I,\] where $\chi_{t}$ is the indicator function of the interval $[0,t)$ and $M_t$ denotes the multiplication operator associated with the function $\chi_{t}$ on $L^{2}([0,\infty),dx)$. We refer the reader to \cite{GHM} for the details of this construction; our point here, however, is that since the norms of $a_t$ and $b_t$ (and hence their spectral radii) are uniformly bounded when $0\leq t\leq t_0$, we are able to conclude that the support of the process $\nu_t$ is uniformly bounded before the cutoff time $t_0$ (actually, within any finite time). In particular, the family $\{\nu_t:0\leq t\leq t_0\}$ is tight. We also mention that the real numbers $\kappa_{1,0}$ and $\kappa_{0,1}$ here are the first two coefficients in the power series expansion \eqref{eq:4.1}, and they represent the mean vector of $\nu$.

We are now ready to present the bi-free L\'{e}vy-Khintchine
representation for compactly supported, infinitely divisible laws.
\begin{thm} \label{BFIDV1} \emph{(Bi-free L\'{e}vy-Khintchine formula for compactly supported measures)}
Let $\nu$ be a compactly supported measure in $\mathcal{BID}$, and let $\{\nu_{t}\}_{t\geq0}$ be the
$\boxplus\boxplus$-semigroup generated by $\nu$. Then there exists a unique triple $(\rho_1,\rho_2,\rho)$ of two compactly supported positive Borel measures $\rho_1$ and $\rho_2$ and a compactly supported Borel signed measure $\rho$ on $\mathbb{R}^2$ such that
\begin{enumerate}[$\qquad(1)$]
\item The weak convergences \[\frac{s^2}{\varepsilon}\,d\nu_{\varepsilon}(s,t)\Rightarrow \rho_1, \quad \frac{t^2}{\varepsilon}\,d\nu_{\varepsilon}(s,t)\Rightarrow \rho_2, \quad \frac{st}{\varepsilon}\,d\nu_{\varepsilon}(s,t)\Rightarrow \rho\] on $\mathbb{R}^2$ and the limits \[\frac{1}{\varepsilon}\int_{\mathbb{R}^2}s\,d\nu_{\varepsilon}(s,t) \rightarrow \kappa_{1,0}
\quad \mathrm{and}\quad \frac{1}{\varepsilon}\int_{\mathbb{R}^2}t\,d\nu_{\varepsilon}(s,t) \rightarrow \kappa_{0,1}\] hold simultaneously as $\epsilon\to0^+$;
\item We have \begin{eqnarray}\label{eq:4.4}
R_{\nu}(z,w) & = & \kappa_{1,0}z+\kappa_{0,1}w+\int_{\mathbb{R}^{2}}\frac{z^{2}}{1-zs}\, d\rho_{1}(s,t)+\int_{\mathbb{R}^{2}}\frac{w^{2}}{1-wt}\, d\rho_{2}(s,t)\\ \nonumber
 &  & +\int_{\mathbb{R}^{2}}\frac{zw}{(1-zs)(1-wt)}\, d\rho(s,t)
\end{eqnarray} for $(z,w)$ in $(\mathbb{C}\setminus\mathbb{R})^2\cup\{(0,0)\}$;
\item The total mass $\rho_{j}(\mathbb{R}^2)$ is equal to the variance of the marginal $\nu^{(j)}$ for $j=1,2$, the number $\rho(\mathbb{R}^2)$ is the covariance of $\nu$, and the measures $\rho_1$, $\rho_2$, and $\rho$ satisfy
\begin{equation}
\begin{cases}
t\, d\rho_{1}=s\, d\rho;\\
s\, d\rho_{2}=t\, d\rho,
\end{cases}\label{eq:4.5}
\end{equation} and
\begin{equation} \label{eq:4.6}
|\rho(\{(0,0)\})|^2 \leq \rho_1(\{(0,0)\})\rho_2(\{(0,0)\}).
\end{equation}
\end{enumerate}
\end{thm}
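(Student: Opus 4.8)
The plan is to produce the triple $(\rho_1,\rho_2,\rho)$ as weak limits of explicit scaled measures, to read off the integral formula \eqref{eq:4.4} from the right-derivative in Proposition 4.1, and then to secure uniqueness---together with the upgrade from subsequential to genuine limits---by invoking Lemma 3.10.

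First I would use the fact, established just before the theorem, that the supports of $\{\nu_\varepsilon:0\le\varepsilon\le t_0\}$ lie in one fixed compact square $K$. Consequently the signed measures $\varepsilon^{-1}s^2\,d\nu_\varepsilon$, $\varepsilon^{-1}t^2\,d\nu_\varepsilon$ and $\varepsilon^{-1}st\,d\nu_\varepsilon$ are all carried by $K$, hence automatically tight. Because the semigroup property gives $R_{\nu_\varepsilon}=\varepsilon R_\nu$ and $R_{\nu_\varepsilon^{(j)}}=\varepsilon R_{\nu^{(j)}}$, every bi-free cumulant scales linearly in $\varepsilon$, and a direct moment computation yields
\[\frac{1}{\varepsilon}\int_{\mathbb{R}^2}s^2\,d\nu_\varepsilon=\kappa_{2,0}+\varepsilon\kappa_{1,0}^2,\qquad \frac{1}{\varepsilon}\int_{\mathbb{R}^2}st\,d\nu_\varepsilon=\kappa_{1,1}+\varepsilon\kappa_{1,0}\kappa_{0,1},\]
together with the analogous $t^2$ identity and $\varepsilon^{-1}\int s\,d\nu_\varepsilon=\kappa_{1,0}$, $\varepsilon^{-1}\int t\,d\nu_\varepsilon=\kappa_{0,1}$. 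These keep the total variations bounded, so along any sequence $\varepsilon\to0^+$ I may extract a subsequence along which $\varepsilon^{-1}s^2\,d\nu_\varepsilon\Rightarrow\rho_1\geq0$, $\varepsilon^{-1}t^2\,d\nu_\varepsilon\Rightarrow\rho_2\geq0$ and $\varepsilon^{-1}st\,d\nu_\varepsilon\Rightarrow\rho$ (signed), with masses $\rho_1(\mathbb{R}^2)=\kappa_{2,0}$, $\rho_2(\mathbb{R}^2)=\kappa_{0,2}$, $\rho(\mathbb{R}^2)=\kappa_{1,1}$; expanding the definition \eqref{eq:2.3} identifies $\kappa_{2,0},\kappa_{0,2},\kappa_{1,1}$ with the two marginal variances and the covariance of $\nu$, giving item (3). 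The relations \eqref{eq:4.5} come for free: the pointwise identities $t\cdot\varepsilon^{-1}s^2\,d\nu_\varepsilon=s\cdot\varepsilon^{-1}st\,d\nu_\varepsilon$ and $s\cdot\varepsilon^{-1}t^2\,d\nu_\varepsilon=t\cdot\varepsilon^{-1}st\,d\nu_\varepsilon$ survive the weak limit once multiplied by the bounded continuous functions $s,t$ on $K$.

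For the formula \eqref{eq:4.4} I would feed these limits into Proposition 4.1. Since $G(0,1/z,1/w)=zw$ and $G(\varepsilon,1/z,1/w)=\int_{\mathbb{R}^2}zw\,[(1-zs)(1-wt)]^{-1}\,d\nu_\varepsilon$, the proposition reads $zwR_\nu(z,w)=\lim_{\varepsilon\to0^+}\varepsilon^{-1}[G(\varepsilon,1/z,1/w)-zw]$. Using the algebraic splitting
\[\frac{zs+wt-zwst}{(1-zs)(1-wt)}=\frac{zs}{1-zs}+\frac{wt}{1-wt}+\frac{zwst}{(1-zs)(1-wt)}\]
together with $s(1-zs)^{-1}=s+zs^2(1-zs)^{-1}$ and its analogue in $t$, dividing by $zw$ exhibits the difference quotient as $z\,\varepsilon^{-1}\int s\,d\nu_\varepsilon$, $w\,\varepsilon^{-1}\int t\,d\nu_\varepsilon$, and the integrals of the kernels $z^2(1-zs)^{-1}$, $w^2(1-wt)^{-1}$, $zw[(1-zs)(1-wt)]^{-1}$ against $\varepsilon^{-1}s^2\,d\nu_\varepsilon$, $\varepsilon^{-1}t^2\,d\nu_\varepsilon$, $\varepsilon^{-1}st\,d\nu_\varepsilon$ respectively. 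On a bidisk small enough that $|zs|,|wt|<1$ throughout $K$ these kernels are bounded and continuous, so the subsequential weak convergences and the moment limits give \eqref{eq:4.4} there; both sides are holomorphic on $(\mathbb{C}\setminus\mathbb{R})^2$ (the support being compact) and vanish at $(0,0)$, so the Identity Theorem extends \eqref{eq:4.4} to $(\mathbb{C}\setminus\mathbb{R})^2\cup\{(0,0)\}$.

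The decisive---and hardest---step is uniqueness, which simultaneously promotes the subsequential limits to honest limits as $\varepsilon\to0^+$. Letting $w\to0$ in \eqref{eq:4.4} and using Lemma 2.4 isolates $zR_{\nu^{(1)}}(z)=\kappa_{1,0}z+\int z^2(1-zs)^{-1}\,d\rho_1$, so the one-variable Cauchy transform pins down the marginal $\rho_1\circ\pi_1^{-1}$, and symmetrically $\rho_2\circ\pi_2^{-1}$; subtracting $R_\nu(z,0)$ and $R_\nu(0,w)$ leaves the mixed term $\int zw[(1-zs)(1-wt)]^{-1}\,d\rho$, whose two-dimensional Cauchy transform determines $\rho$ outright. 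The obstacle is that \eqref{eq:4.4} sees $\rho_1$ only through its $s$-marginal, so $\rho_1$ is not yet determined; here \eqref{eq:4.5} rescues it, for $t\,d\rho_1=s\,d\rho$ gives $t^2\,d\rho_1=st\,d\rho$, whence $\frac{t^2}{1+t^2}\,d\rho_1$ is fixed by the now-known $\rho$, and Lemma 3.10 (with its $s\leftrightarrow t$ mirror image for $\rho_2$) forces $\rho_1$ and $\rho_2$ to be unique. As the whole triple is thus determined by the single transform $R_\nu$, all subsequential limits agree and the three weak convergences of item (1) hold as $\varepsilon\to0^+$. Finally \eqref{eq:4.6} is a limiting Cauchy-Schwarz statement at the atom $(0,0)$: for a closed ball $B_\delta$ about the origin that is a continuity set of $\rho_1,\rho_2,|\rho|$, the bound $|\varepsilon^{-1}\int_{B_\delta}st\,d\nu_\varepsilon|^2\le(\varepsilon^{-1}\int_{B_\delta}s^2\,d\nu_\varepsilon)(\varepsilon^{-1}\int_{B_\delta}t^2\,d\nu_\varepsilon)$ passes to the limit to give $|\rho(B_\delta)|^2\le\rho_1(B_\delta)\rho_2(B_\delta)$, and letting $\delta\to0^+$ through such sets yields \eqref{eq:4.6}.
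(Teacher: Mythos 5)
Your proof is correct, and its skeleton matches the paper's for existence and for the integral formula: uniform boundedness of the supports of $\{\nu_\varepsilon\}_{0\le\varepsilon\le t_0}$ gives compactness, Proposition 4.1 plus the algebraic splitting of $\frac{zs+wt-zwst}{(1-zs)(1-wt)}$ yields \eqref{eq:4.4} along subsequences, and \eqref{eq:4.5} follows from the same test-function argument the paper uses. Where you genuinely diverge is the decisive uniqueness step. The paper expands \eqref{eq:4.8} and \eqref{eq:4.10} into power series, using the decomposition $\frac{st}{(1-zs)(1-wt)}=st+\frac{ws}{1-wt}\,t^{2}+\frac{zt}{(1-zs)(1-wt)}\,s^{2}$, and compares coefficients to conclude that all joint moments $M_{m,n}(\rho_1)$ are determined by the cumulants $\kappa_{m,n}$; compact support then gives determinacy by moments. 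You instead observe that $\rho$ is determined outright by the mixed term (a two-dimensional Cauchy transform), that the $s$-marginal of $\rho_1$ is determined by $R_{\nu^{(1)}}$ via Lemma 2.4, and that $t\,d\rho_1=s\,d\rho$ forces $\frac{t^{2}}{1+t^{2}}\,d\rho_1$ to be fixed by the already-known $\rho$, so Lemma \ref{unique2} pins down $\rho_1$. This is precisely the mechanism the authors themselves deploy in Section 3 (between Lemma \ref{unique2} and Proposition 3.11), so it is fully consistent with the paper's toolkit; it is arguably cleaner, avoids moment determinacy, and would survive without compact support, whereas the paper's coefficient comparison is tied to the compactly supported setting but stays within elementary power-series manipulation.

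Two caveats, both repairable. First, you get total-variation bounds and the masses in item (3) from exact identities such as $\varepsilon^{-1}\int st\,d\nu_\varepsilon=\kappa_{1,1}+\varepsilon\kappa_{1,0}\kappa_{0,1}$; these are true, but they rest on a moment--cumulant computation (expanding \eqref{eq:2.3} to identify $\kappa_{1,1}$ with the covariance, etc.) that you only gesture at, and which the paper deliberately avoids by instead taking imaginary parts in \eqref{eq:4.8}. Second, in your argument for \eqref{eq:4.6}, the convergence $\varepsilon^{-1}\int_{B_\delta}st\,d\nu_\varepsilon\rightarrow\rho(B_\delta)$ is not justified by $B_\delta$ being a continuity set of $|\rho|$ alone: for signed measures, weak convergence plus a continuity set of the limit does not imply set-wise convergence, since mass of the total variations can concentrate on $\partial B_\delta$ and cancel in the limit. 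What saves you is the pointwise domination $|st|\le\frac{1}{2}(s^{2}+t^{2})$, which gives $\left|\varepsilon^{-1}st\,d\nu_\varepsilon\right|\le\frac{1}{2}\left(\varepsilon^{-1}s^{2}+\varepsilon^{-1}t^{2}\right)d\nu_\varepsilon\Rightarrow\frac{1}{2}(\rho_1+\rho_2)$, so that continuity of $B_\delta$ for $\rho_1+\rho_2$ (which you already assume) suffices. The paper sidesteps this issue entirely by integrating continuous functions $\varphi_n$ decreasing to $I_{\{(0,0)\}}$ and invoking dominated convergence.
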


\begin{proof} First, once we can show that the three families $\{(1/\varepsilon)\,s^2\,d\nu_{\varepsilon}:0<\varepsilon\leq t_0\}$, $\{(1/\varepsilon)\,t^2\,d\nu_{\varepsilon}:0<\varepsilon\leq t_0\}$, and $\{(1/\varepsilon)\,st\,d\nu_{\varepsilon}:0<\varepsilon\leq t_0\}$ are all bounded in total variation norm, the existence for compactly supported limit laws $\rho_1$, $\rho_2$, and $\rho$ will become evident. This is because the family $\{\nu_{\varepsilon}:0\leq\varepsilon\leq t_0\}$ is tight and has a uniformly bounded support. Secondly, after the existence of the limits is established, we shall proceed to prove their uniqueness and the formulas \eqref{eq:4.4}-\eqref{eq:4.6}.

We start with Proposition 4.1 which states that the limit
\begin{align*}
zwR_{\nu}(z,w)&=\lim_{\varepsilon\to0^+}(1/\varepsilon)
\left[G\left(\varepsilon,1/z,1/w\right)-G\left(0,1/z,1/w\right)\right] \\
&=\lim_{\varepsilon\to0^+}\frac{1}{\varepsilon}
\left[\int_{\mathbb{R}^2}\frac{zw}{(1-zs)(1-wt)}\;d\nu_\varepsilon(s,t)-zw\right] \\
&=zw\left[\lim_{\varepsilon\to0^+}\frac{1}{\varepsilon}\int_{\mathbb{R}^2}
\left(\frac{zs}{1-zs}+\frac{wt}{1-wt}+\frac{zwst}{(1-zs)(1-wt)}\right)d\nu_\varepsilon(s,t)\right]
\end{align*} holds for $(z,w)$ in the punctured bidisk $\Omega^{*}$. Hence, we have the identity
\begin{equation}\label{eq:4.7} R_\nu(z,w)=\lim_{\varepsilon\to0^+}\frac{1}{\varepsilon}\int_{\mathbb{R}^2}
\left[\frac{zs}{1-zs}+\frac{wt}{1-wt}+\frac{zwst}{(1-zs)(1-wt)}\right]d\nu_\varepsilon(s,t)\end{equation} in the bidisk $\Omega$, for the integrand is equal to zero for any $(s,t)$ if $z=0=w$.

So, by letting $w=0$ in \eqref{eq:4.7}, Lemma 2.4 implies
\[zR_{\nu^{(1)}}(z)=R_\nu(z,0)
=z\left[\lim_{\varepsilon\to0^+}\frac{1}{\varepsilon}\int_{\mathbb{R}^2}\frac{s}{1-zs}\;d\nu_\epsilon(s,t)\right]\]
or, equivalently,
\begin{equation}\label{eq:4.8}R_{\nu^{(1)}}(z)=
\lim_{\varepsilon\to0^+}\frac{1}{\varepsilon}\int_{\mathbb{R}^2}\frac{s}{1-zs}\;d\nu_\epsilon(s,t)\end{equation} for $|z|<r$. After plugging $z=0$ in this formula, we obtain the limiting formula for the constant $\kappa_{1,0}$. Moreover, by taking the imaginary part of \eqref{eq:4.8}, we reach \[
\lim_{\varepsilon\to0^+}\int_{\mathbb{R}^2}\frac{\Im z}{|1-zs|^2}\frac{s^2}{\varepsilon}\;d\nu_\epsilon(s,t)=\Im R_{\nu^{(1)}}(z),\quad |z|<r.\] This shows that the family $\{(1/\varepsilon)\,s^2\,d\nu_{\varepsilon}:0<\varepsilon\leq t_0\}$ has uniformly bounded total variation norms, because the integrand $\Im z/|1-zs|^2$ is uniformly bounded away from zero and from infinity for $s$ in the uniform support of $\{\nu_{\varepsilon}:0\leq \varepsilon\leq t_0\}$ and for $r/2<|z|<r$, $z\notin \mathbb{R}$. We deduce from the same kind of argument that \begin{equation}\label{eq:4.9}R_{\nu^{(2)}}(w)=
\lim_{\varepsilon\to0^+}\frac{1}{\varepsilon}\int_{\mathbb{R}^2}\frac{t}{1-wt}\;d\nu_\epsilon(s,t),\quad |w|<r,\end{equation} \[\kappa_{0,1}=\lim_{\varepsilon \rightarrow 0^{+}}\frac{1}{\varepsilon}\int_{\mathbb{R}^2}t\,d\nu_{\varepsilon}(s,t),\] and that the set $\{(1/\varepsilon)\,t^2\,d\nu_{\varepsilon}:0<\varepsilon\leq t_0\}$ is bounded in total variation norm. Finally, the boundedness of the remaining family $\{(1/\varepsilon)\,st\,d\nu_{\varepsilon}:0<\varepsilon\leq t_0\}$ is an easy consequence of the Cauchy-Schwarz inequality. So, the existence of the limit laws is proved.

In view of \eqref{eq:4.8} and \eqref{eq:4.9}, we can now split \eqref{eq:4.7} into three limits, replace the first two with the corresponding $R$-transforms, and finally get \begin{equation}\label{eq:4.10}\lim_{\varepsilon\to0^+}\int_{\mathbb{R}^2}
\frac{zw}{(1-zs)(1-wt)}\frac{st}{\varepsilon}\,d\nu_\varepsilon(s,t)=R_\nu(z,w)-zR_{\nu^{(1)}}(z)-wR_{\nu^{(2)}}(w)\end{equation} for $(z,w)\in\Omega$. This shows that if $\rho$ and $\rho^{\prime}$ are two weak limits of the signed measures $\{(1/\varepsilon)\,st\,d\nu_{\varepsilon}:0<\varepsilon\leq t_0\}$ as $\varepsilon \rightarrow 0^{+}$, then they must satisfy \[G_{\rho}(1/z,1/w)=G_{\rho^{\prime}}(1/z,1/w)\] for $(z,w)$ in the open set $\Omega^{*}$ (and hence everywhere by analytic extension), proving that $\rho=\rho^{\prime}$. Therefore, the limit law $\rho$ is unique and $(1/\varepsilon)\,st\,d\nu_{\varepsilon} \Rightarrow \rho$. In addition, it follows from \eqref{eq:4.10} and the power series expansion \eqref{eq:4.1} that \[\rho(\mathbb{R}^2)=\kappa_{1,1}= \text{Covariance}(\nu).\]

To see the uniqueness of the other two limit laws $\rho_1$ and $\rho_2$, we first expand the integrand in \eqref{eq:4.8} into a power series of $z$ and then use the fact that any limit $\rho_1$ of $\{(1/\varepsilon)\,s^2\,d\nu_{\varepsilon}:0<\varepsilon\leq t_0\}$ is compactly supported to get
\begin{equation} \label{eq:4.11}
R_{\nu^{(1)}}(z)=\kappa_{1,0}+\sum_{m=0}^\infty
\left(\int_{\mathbb{R}^2}s^m\;d\rho_1(s,t)\right)z^{m+1},\quad |z|<r.
\end{equation} Similarly, for $|w|<r$, we have
\begin{equation} \label{eq:4.12}
R_{\nu^{(2)}}(w)=\kappa_{0,1}+\sum_{n=0}^\infty
\left(\int_{\mathbb{R}^2}t^n\;d\rho_2(s,t)\right)w^{n+1}.
\end{equation} On the other hand, we should do the same for \eqref{eq:4.10}; only this time we will use the following decomposition
\[\frac{st}{(1-zs)(1-wt)}=st+\frac{ws}{1-wt}\cdot
t^2+\frac{zt}{(1-zs)(1-wt)}\cdot s^2\] to obtain
\begin{eqnarray*}
R_\nu(z,w)-zR_{\nu^{(1)}}(z)-wR_{\nu^{(2)}}(w) & = &\kappa_{1,1}zw+\sum_{n\geq0}\left(\int_{\mathbb{R}^2}st^n\;d\rho_2(s,t)\right)zw^{n+2} \\ & & +
\sum_{m,n\geq0}\left(\int_{\mathbb{R}^2}s^mt^{n+1}\;d\rho_1(s,t)\right)z^{m+2}w^{n+1}.
\end{eqnarray*} Combining this with \eqref{eq:4.11}, \eqref{eq:4.12}, and the original power series expansion \eqref{eq:4.1} of $R_{\nu}$, we have shown the following identity
\begin{eqnarray*}
\sum_{m,n \geq 0} \kappa_{m,n}\,z^{m}w^{n} & = & \kappa_{1,0}z+\kappa_{0,1}w+\sum_{m=0}^\infty
M_{m,0}(\rho_{1})z^{m+2}+\sum_{n=0}^\infty
M_{0,n}(\rho_{2})w^{n+2}\\ & &+\kappa_{1,1}zw+\sum_{n\geq0}M_{1,n}(\rho_{2})zw^{n+2}+
\sum_{m,n\geq0}M_{m,n+1}(\rho_{1})z^{m+2}w^{n+1}
\end{eqnarray*} of power series in the open set $\Omega$, where the notation \[M_{m,n}(\rho_{j})=\int_{\mathbb{R}^2}s^m\,t^n\;d\rho_j(s,t),\quad m,n\geq 0,\quad j=1,2.\] Since all these power series converge absolutely, we are allowed to rearrange the order of summation freely. By the uniqueness of power series expansion in open sets, we conclude that the moments of the limiting measure $\rho_1$ (and hence $\rho_1$ itself) are uniquely determined by the given sequence $\{\kappa_{m,n}\}_{m,n\geq 0}$ of coefficients and therefore the weak convergence $(1/\varepsilon)\,s^2\,d\nu_{\varepsilon} \Rightarrow \rho_1$ holds. Also, by comparing the coefficients in the preceding identity of power series, we have \[\rho_{1}(\mathbb{R}^2)= \kappa_{2,0}=\text{Variance}(\nu^{(1)}).\] The uniqueness of $\rho_2$ and its statistics can be shown in the same way, and we shall not repeat this argument. Thus, the statement (1) of the theorem is proved.

The integral representation \eqref{eq:4.4} is a direct consequence of \eqref{eq:4.7} and the convergences in (1).

To show the system \eqref{eq:4.5}, take any continuous and bounded function $\varphi$ on $\mathbb{R}^2$, the weak convergence results in (1) imply \[\int_{\mathbb{R}^2}\varphi(s,t)\, t\;d\rho_1(s,t)=\lim_{\varepsilon\to0^+}(1/\varepsilon)\int_{\mathbb{R}^2}\varphi(s,t)\,t\,s^2\;
d\nu_\varepsilon(s,t)=\int_{\mathbb{R}^2}\varphi(s,t)\, s\;d\rho(s,t),\] from
which we deduce that $t\,d\rho_1(s,t)=s\,d\rho(s,t)$. Similarly,
$s\,d\rho_2(s,t)=t\,d\rho(s,t)$ holds.

Finally, for the inequality \eqref{eq:4.6}, let $\varphi_n$ be a sequence of continuous functions such that $0\leq \varphi_n(s,t) \leq 1$ and $\lim_{n\rightarrow \infty}\varphi_n(s,t)=I_{\{(0,0)\}}(s,t)$, the indicator function of the singleton $\{(0,0)\}$, for all $(s,t)\in \mathbb{R}^2$. An application of the Cauchy-Schwarz inequality to the measure $(1/\varepsilon)\varphi_n\,d\nu_{\varepsilon}$ shows that \[\left|\int_{\mathbb{R}^2}\varphi_n(s,t)\frac{st}{\varepsilon}\;d\nu_\varepsilon(s,t)\right|^2\leq
\int_{\mathbb{R}^2}\varphi_n(s,t)\frac{s^2}{\varepsilon}\;d\nu_\varepsilon(s,t)
\int_{\mathbb{R}^2}\varphi_n(s,t)\frac{t^2}{\varepsilon}\;d\nu_\varepsilon(s,t)\] for all $\varepsilon>0$. By letting $\varepsilon\rightarrow 0^{+}$, we obtain the estimate \[\left|\int_{\mathbb{R}^2}\varphi_n(s,t)\;d\rho(s,t)\right|^2\leq
\int_{\mathbb{R}^2}\varphi_n(s,t)\;d\rho_1(s,t)
\int_{\mathbb{R}^2}\varphi_n(s,t)\;d\rho_2(s,t),\quad n\geq 1.\] The inequality \eqref{eq:4.6} follows from this estimate and the dominated convergence theorem. The theorem is now completely proved.
\end{proof}
We mention that the integral formula \eqref{eq:4.4} for compactly supported, bi-freely infinitely divisible measures was first obtained in \cite{GHM} by means of combinatorial and operator-theoretical methods.

We next present our main result in which the boundedness condition for the support of $\nu$ is no longer needed. Note that the following result is stronger than the one obtained in \cite{GHM}, because it provides a complete parametrization for the entire class $\mathcal{BID}$.
\begin{thm} \label{BFIDV3} \emph{(General bi-free L\'{e}vy-Khintchine
representation)}
Let $R$ be a given holomorphic function defined on the product domain $\Omega=(\Delta\cup\overline{\Delta})\times(\Delta\cup\overline{\Delta})$ associated with some Stolz angle $\Delta$. Then
the following statements are equivalent:
\begin{enumerate} [$\qquad(1)$]
\item {There exists a law $\nu\in \mathcal{BID}$ such that $R=R_{\nu}$ on $\Omega$.}
\item {There exist $\gamma_1,\gamma_2\in \mathbb{R}$, two finite Borel positive measures $\rho_1$ and $\rho_2$ on $\mathbb{R}^2$, and a finite Borel signed measure $\rho$ on $\mathbb{R}^2$ such that
\begin{equation}
\begin{cases}
t/\sqrt{1+t^2}\, d\rho_{1}=s/\sqrt{1+s^2}\, d\rho;\\
s/\sqrt{1+s^2}\, d\rho_{2}=t/\sqrt{1+t^2}\, d\rho;\\
|\rho(\{(0,0)\})|^2 \leq \rho_1(\{(0,0)\})\rho_2(\{(0,0)\}),
\end{cases}\label{eq:4.15}
\end{equation} and the function $R$ extends analytically to $(\mathbb{C}\setminus \mathbb{R})^2$ via the formula: \begin{eqnarray}\label{eq:4.16}
R(z,w) & = & \gamma_{1}z+\gamma_{2}w+\int_{\mathbb{R}^{2}}\frac{z^{2}+zs}{1-zs}\, d\rho_{1}(s,t)+\int_{\mathbb{R}^{2}}\frac{w^{2}+wt}{1-wt}\, d\rho_{2}(s,t)\\
 &  & +\int_{\mathbb{R}^{2}}\frac{zw\sqrt{1+s^{2}}\sqrt{1+t^{2}}}{(1-zs)(1-wt)}\, d\rho(s,t). \nonumber
\end{eqnarray}}
\end{enumerate}
In this case, the quintuple $(\gamma_1, \gamma_2, \rho_{1}, \rho_{2}, \rho)$ is unique, and we have the marginal law $\nu^{(j)}=\nu_{\boxplus}^{\gamma_j,\rho_{j}^{(j)}}$ for $j=1,2$.
\end{thm}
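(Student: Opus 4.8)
The plan is to read the representation \eqref{eq:4.16} as a repackaging of the quintuple integral form of $R_\nu$ that is already attached to every $\nu\in\mathcal{BID}$ by Theorem 3.2 and the discussion preceding Proposition 3.12. Since the integrand $(z^2+zs)/(1-zs)$ depends only on $s$, one has $\int_{\mathbb{R}^2}\frac{z^2+zs}{1-zs}\,d\rho_1=\int_{\mathbb{R}}\frac{z^2+zs}{1-zs}\,d\rho_1^{(1)}(s)=zR_{\nu_\boxplus^{\gamma_1,\rho_1^{(1)}}}(z)-\gamma_1 z$ by the one-dimensional L\'evy--Khintchine formula, while the last integral in \eqref{eq:4.16} is exactly $G_{\sqrt{1+s^2}\sqrt{1+t^2}\,d\rho}(1/z,1/w)$. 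Thus \eqref{eq:4.16} coincides with the quintuple form $zR_{\nu_\boxplus^{\gamma_1,\sigma_1}}(z)+wR_{\nu_\boxplus^{\gamma_2,\sigma_2}}(w)+G_{\sqrt{1+s^2}\sqrt{1+t^2}\,d\rho}(1/z,1/w)$ once $\sigma_j=\rho_j^{(j)}$, so the whole theorem reduces to showing that positive $\rho_1,\rho_2$ with these marginals exist precisely under the compatibility relations \eqref{eq:4.15}; that is, it amounts to identifying the range of the map $\Lambda$.

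For (1)$\Rightarrow$(2) I would realize $\nu$ via $R_\nu=\lim_n k_nR_{\mu_n}$ (Theorem 3.9) and take $\rho_1,\rho_2$ to be the weak limits of the tight families $\{(k_ns^2/(1+s^2))\,d\mu_n\}$ and $\{(k_nt^2/(1+t^2))\,d\mu_n\}$ on $\mathbb{R}^2$, which exist as genuine (non-subsequential) limits by the uniqueness argument based on Lemma 3.10 carried out before Proposition 3.11; then $\rho_1^{(1)}=\sigma_1$, $\rho_2^{(2)}=\sigma_2$, and $\rho$ is the usual limit of $\rho_n$. The first two relations in \eqref{eq:4.15} follow by passing to the limit in the pre-limit identity $\frac{t}{\sqrt{1+t^2}}\cdot\frac{k_ns^2}{1+s^2}\,d\mu_n=\frac{s}{\sqrt{1+s^2}}\cdot\frac{k_nst}{\sqrt{1+s^2}\sqrt{1+t^2}}\,d\mu_n$ and its mirror image (the weighting functions being bounded and continuous), and the inequality \eqref{eq:4.6} comes from Cauchy--Schwarz applied to $(1/\varepsilon)\varphi_n\,d\nu_\varepsilon$ with $\varphi_n\to I_{\{(0,0)\}}$, exactly as in the proof of Theorem \ref{BFIDV1}. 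Formula \eqref{eq:4.16} is then the quintuple form rewritten via the identification above, and the extension to $(\mathbb{C}\setminus\mathbb{R})^2$ is immediate because all three integrands are bounded and continuous in $(s,t)$ whenever $z,w\notin\mathbb{R}$.

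The substance of the theorem is (2)$\Rightarrow$(1): given the five objects satisfying \eqref{eq:4.15}, one must build a law in $\mathcal{BID}$ with the prescribed transform, and I would do this by exploiting the closure of $\mathcal{BID}$ and decomposing the data according to where $\rho$ lives. The relations \eqref{eq:4.15} force $\rho$ to be carried by $\{st\neq0\}\cup\{(0,0)\}$ and, off the axes, to have the sign of $st$ (as $d\rho=\frac{t\sqrt{1+s^2}}{s\sqrt{1+t^2}}\,d\rho_1$ with $\rho_1\geq0$); they also pin down $\rho_1,\rho_2$ off the axes, leaving only pure-marginal mass on $\{t=0\}$ and $\{s=0\}$ and an atom at the origin constrained by \eqref{eq:4.6}. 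Accordingly I would realize (i) the origin part as a bi-free Gaussian with covariance $\bigl(\begin{smallmatrix}\rho_1(\{0\})&\rho(\{0\})\\\rho(\{0\})&\rho_2(\{0\})\end{smallmatrix}\bigr)$, legitimate precisely because \eqref{eq:4.6} is the condition $|c|\leq\sqrt{ab}$ of Example 3.4; (ii) the axis parts as a product $\nu_\boxplus^{\gamma_1,\cdot}\otimes\nu_\boxplus^{\gamma_2,\cdot}$ of one-dimensional freely infinitely divisible laws, which lies in $\mathcal{BID}$ by Example 3.8 and absorbs the drifts; and (iii) the genuinely two-faced off-axis part as a weak limit of bi-free compound Poisson laws (Example 3.5), splitting $\rho|_{\{st\neq0\}}$ into its pieces on $\{st>0\}$ and $\{st<0\}$ so that the jump measures $d\mu_\pm\propto\frac{\sqrt{1+s^2}\sqrt{1+t^2}}{st}\,d\rho$ are positive. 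These three laws are then combined by the generalized bi-free convolution of Proposition 3.12, under which quintuples add, so that $\Lambda$ of the result is the target $(\gamma_1,\gamma_2,\sigma_1,\sigma_2,\rho)$; Theorem 3.6 (equivalently, the characterization in Proposition 3.11) then guarantees the combined law lies in $\mathcal{BID}$ with $R$-transform \eqref{eq:4.16}.

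The main obstacle lies entirely in step (iii): the jump measures $\mu_\pm$ have finite mass only after truncating away from the two axes, since the factor $1/(st)$ fails to be integrable where $\rho$ concentrates near $\{s=0\}\cup\{t=0\}$. I would therefore realize the truncations $\rho|_{\{|s|,|t|>\delta\}}$ by genuine compound Poisson laws and let $\delta\to0^+$, checking via Proposition 2.6 that the approximants converge weakly: their quintuples converge because $|\rho|$ is finite and the removed mass tends to $0$, their transforms converge pointwise, and the uniform tightness of condition (3) holds by the compound-Poisson estimates used in the proof of Theorem 3.6. The delicate point is to confirm that this near-axis mass survives as an off-axis (jump) contribution in the limit and does not leak into the Gaussian part. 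Once existence is secured, uniqueness of $(\gamma_1,\gamma_2,\rho_1,\rho_2,\rho)$ follows by recovering $\gamma_j$ and $\rho_j^{(j)}$ from the single-variable restrictions $R(z,0)=zR_{\nu^{(1)}}(z)$ and $R(0,w)=wR_{\nu^{(2)}}(w)$ and the uniqueness of one-dimensional L\'evy parameters, recovering $\rho$ from $R(z,w)-R(z,0)-R(0,w)=G_{\sqrt{1+s^2}\sqrt{1+t^2}\,d\rho}(1/z,1/w)$ by Cauchy-transform inversion, and finally recovering $\rho_1,\rho_2$ from $\rho$ together with $\rho_1^{(1)},\rho_2^{(2)}$ through \eqref{eq:4.15}; the marginal identity $\nu^{(j)}=\nu_\boxplus^{\gamma_j,\rho_j^{(j)}}$ is read off from the same restriction $R(z,0)=zR_{\nu^{(1)}}(z)$.
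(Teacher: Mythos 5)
Your plan for (2)$\Rightarrow$(1) is structurally the same as the paper's proof: the paper also uses \eqref{eq:4.15} to show $\rho$ puts no mass on the punctured axes and that $\rho_1$ (resp.\ $\rho_2$) puts no mass on the punctured $t$-axis (resp.\ $s$-axis), then splits the data into the atom at the origin (a bi-free Gaussian, legitimated by the third line of \eqref{eq:4.15} as in Example 3.4), the two axes (the product law $\nu_{\boxplus}^{\gamma_1,\rho_1^{S}}\otimes\nu_{\boxplus}^{\gamma_2,\rho_2^{T}}$ of Example 3.8), and the slit plane $\{st\neq0\}$ handled by truncation away from the axes, a Poisson-type realization of each truncation, and a passage to the limit via Proposition 2.6 plus the closedness of $\mathcal{BID}$; the pieces are then assembled with Proposition 3.12. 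Your (1)$\Rightarrow$(2) outline is also consistent with what the paper treats as already established. So the approach is right; but there is one concrete gap in your step (iii).

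The truncated off-axis integral form is \emph{not} the $R$-transform of a compound Poisson law, and your approximating sequence of ``genuine compound Poisson laws'' need not converge. With $\tau=\frac{\sqrt{1+s^2}\sqrt{1+t^2}}{st}\,d\rho=\frac{1+s^2}{s^2}\,d\rho_1=\frac{1+t^2}{t^2}\,d\rho_2$ on the truncation region, $\lambda=\tau(\cdot)$ and $\mu=\tau/\lambda$, one computes
\[
\lambda\left[\int\frac{1}{(1-zs)(1-wt)}\,d\mu(s,t)-1\right]
=R_{\mathrm{trunc}}(z,w)+z\int\frac{1}{s}\,d\rho_{1}(s,t)+w\int\frac{1}{t}\,d\rho_{2}(s,t),
\]
all integrals taken over the truncation region; this is the content of the paper's identity $\frac{z+x}{1-zx}\frac{x^{2}}{1+x^{2}}=\frac{x}{1-zx}-\frac{x}{1+x^{2}}$. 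Hence each truncation is realized as $\delta_{(a,b)}\boxplus\boxplus\nu_{\lambda,\mu}$ with truncation-dependent drift $a=-\int\frac{1}{s}\,d\rho_1$, $b=-\int\frac{1}{t}\,d\rho_2$, and these drifts can genuinely diverge as the truncation is removed: take $\rho_1=\rho_2=\rho$ with atoms of mass $2^{-k}$ at $(2^{-k},2^{-k})$, $k\geq1$; then all of \eqref{eq:4.15} holds and all three measures are finite, yet $\int_{\{st\neq0\}}\frac{1}{s}\,d\rho_1=\sum_k 1=\infty$. For such data the $R$-transforms of your drift-free compound Poisson approximants blow up pointwise, so the Proposition 2.6 argument you invoke would fail; the fix is exactly the paper's, namely to carry the point-mass factor $\delta_{(a,b)}$ along (each $\delta_{(a,b)}\boxplus\boxplus\nu_{\lambda,\mu}$ is still in $\mathcal{BID}$ and its transform equals $R_{\mathrm{trunc}}$ on the nose). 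Two lesser remarks: your ``delicate point'' about near-axis mass leaking into the Gaussian part is moot, since Proposition 2.6 identifies the limiting transform as the prescribed $R_3$ directly and uniqueness of the quintuple is a separate argument; and in (1)$\Rightarrow$(2) the Cauchy--Schwarz step should be applied to the approximants $k_n\,d\mu_n$ with the bounded weights $s/\sqrt{1+s^2}$, $t/\sqrt{1+t^2}$ rather than to $(1/\varepsilon)\,d\nu_\varepsilon$, because for a general (non-compactly supported) $\nu\in\mathcal{BID}$ the measures $(s^{2}/\varepsilon)\,d\nu_\varepsilon$ need not even be finite.
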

\begin{proof} It is clear that only the implication from (2) to (1) needs a proof.  Suppose we are given the integral form \eqref{eq:4.16}, whose representing measures $\rho_1$, $\rho_2$, and $\rho$ satisfy the system \eqref{eq:4.15}. Let $S=\{(s,0)\in \mathbb{R}^2: s\neq 0\}$ and $T=\{(0,t)\in \mathbb{R}^2: t\neq 0\}$ be the two punctured coordinate axes on the plane and let $U=\mathbb{R}^2 \setminus (S\cup T \cup \{(0,0)\})$ be the slit plane. The sets $S$, $T$, and $U$ are Borel measurable, and we can consider the following decompositions \begin{equation*}
\begin{cases}
\rho_j=\rho_j(\{(0,0)\})\delta_{(0,0)}+\rho_{j}^{S}+\rho_{j}^{T}+\rho_{j}^{U},\quad j=1,2,\\
\rho=\rho(\{(0,0)\})\delta_{(0,0)}+\rho^{S}+\rho^{T}+\rho^{U}.
\end{cases}
\end{equation*} A notation like $\rho^{S}$ here means the restriction of the measure $\rho$ on the Borel set $S$, i.e., \[\rho^{S}(E)=\rho(E\cap S)\] for all Borel sets $E\subset \mathbb{R}^2$. We shall identify the measures restricted on $S$ or $T$ as measures on $\mathbb{R}$ and write, with a slight abuse of notation, that $\rho_{j}^{S}=d\rho_{j}^{S}(s)$ and $\rho_{j}^{T}=d\rho_{j}^{T}(t)$.

To each $n\geq 1$, we introduce the set $T_{n}=\{(0,t)\in \mathbb{R}^2: |t|\geq 1/n\}$ and observe from the dominated convergence theorem and the system \eqref{eq:4.15} that  \begin{eqnarray*}
\rho_1(T) & = &\lim_{n\rightarrow \infty} \rho_1(T_n)=\lim_{n\rightarrow \infty}\int_{T_n} \frac{\sqrt{1+t^2}}{t}\frac{t}{\sqrt{1+t^2}}\,d\rho_1(s,t)\\
 &  & = \lim_{n\rightarrow \infty}\int_{T_n} \frac{\sqrt{1+t^2}}{t}\frac{s}{\sqrt{1+s^2}}\,d\rho(s,t)=\lim_{n\rightarrow \infty}\int_{T_n} 0\,d\rho(s,t)=0.
\end{eqnarray*} This implies that the restricted measure $\rho_1^{T}$ is in fact the zero measure. Similarly, one can show that the measures $\rho_2^{S}$, $\rho^{S}$, and $\rho^{T}$ are also equal to the zero measure. Accordingly, the integral form \eqref{eq:4.16} can be decomposed into
\begin{eqnarray*}
R(z,w) & = & \rho_1(\{(0,0)\})z^2+\rho_2(\{(0,0)\})w^2+\rho(\{(0,0)\})zw\\ & & +z\left[\gamma_{1}+\int_{\mathbb{R}\setminus \{0\}}\frac{z+s}{1-zs}\, d\rho_{1}^{S}(s)\right]+w\left[\gamma_{2}+\int_{\mathbb{R}\setminus \{0\}}\frac{w+t}{1-wt}\, d\rho_{2}^{T}(t)\right]\\
 &  & +\int_{U}\frac{z^2+zs}{1-zs}\, d\rho_{1}^{U}(s,t)+\int_{U}\frac{w^2+wt}{1-wt}\, d\rho_{2}^{U}(s,t)\\ & &+\int_{U}\frac{zw\sqrt{1+s^{2}}\sqrt{1+t^{2}}}{(1-zs)(1-wt)}\, d\rho^{U}(s,t), \nonumber
\end{eqnarray*} in which we find that \[\rho_1(\{(0,0)\})z^2+\rho_2(\{(0,0)\})w^2+\rho(\{(0,0)\})zw=R_{\mu_{1}}(z,w)\] for some bi-free Gaussian law $\mu_1$ by \eqref{eq:4.15} and that \[z\left[\gamma_{1}+\int_{\mathbb{R}\setminus \{0\}}\frac{z+s}{1-zs}\, d\rho_{1}^{S}(s)\right]+w\left[\gamma_{2}+\int_{\mathbb{R}\setminus \{0\}}\frac{w+t}{1-wt}\, d\rho_{2}^{T}(t)\right]\] is the bi-free $R$-transform of the product measure \[\mu_2=\nu_{\boxplus}^{\gamma_1, \rho_{1}^{S}}\otimes \nu_{\boxplus}^{\gamma_2, \rho_{2}^{T}}.\] Note that both $\mu_1$ and $\mu_2$ are bi-freely infinitely divisible.

We shall argue that the remaining integral form $R_3=R-R_{\mu_1}-R_{\mu_2}$ is also an infinitely divisible bi-free $R$-transform. Toward this end we consider the truncations \[\rho_{jn}^{U}=\varphi_{n}\,\rho_j^{U}\quad \text{and} \quad \rho_{n}^{U}=\varphi_{n}\,\rho^{U} \quad (j=1,2),\] where $0\leq \varphi_{n} \leq 1$ is a continuous function on $\mathbb{R}^2$ such that $\varphi_{n}(s,t)=1$ for $(s,t)\in U_{n}=\{(s,t)\in \mathbb{R}^2: |s|\geq 1/n, |t|\geq 1/n\}$ and $\varphi_{n}(s,t)=0$ on the complement $\mathbb{R}^2\setminus U_{n+1}$. Clearly, we have $\rho_{jn}^{U}\Rightarrow \rho_{j}^{U}$ ($j=1,2$) and $\rho_{n}^{U}\Rightarrow \rho^{U}$ as $n\rightarrow \infty$, so that the corresponding sequence \begin{eqnarray}\label{eq:4.17}
R_n(z,w) & = & \int_{U_{n+1}}\frac{z^2+zs}{1-zs}\, d\rho_{1n}^{U}(s,t)+\int_{U_{n+1}}\frac{w^2+wt}{1-wt}\, d\rho_{2n}^{U}(s,t) \\ & &+\int_{U_{n+1}}\frac{zw\sqrt{1+s^{2}}\sqrt{1+t^{2}}}{(1-zs)(1-wt)}\, d\rho_{n}^{U}(s,t)\nonumber
\end{eqnarray} tends to $R_3(z,w)$ for each $z,w \notin \mathbb{R}$. In addition, it is easy to see that the limit \[\lim_{y,v\rightarrow 0^{+}}R_n (-iy,-iv)= 0\] holds uniformly for all $n$.

Therefore, if we can show that each $R_n$ is an infinitely divisible bi-free $R$-transform, then Proposition 2.6 and the fact that the family $\mathcal{BID}$ is closed under the topology of weak convergence would imply that $R_3=R_{\mu_3}$ for some $\mu_3\in \mathcal{BID}$. In that way, the desired probability measure $\nu$ could be given by the generalized bi-free convolution \[\nu=\mu_1\boxplus\boxplus\mu_2\boxplus\boxplus\mu_3\] among these infinitely divisible laws. For this purpose, we fix $n$ and introduce a finite Borel signed measure $\tau$ on $U_{n+1}$ by \begin{equation}\label{eq:4.18}\tau=\frac{\sqrt{1+s^2}\sqrt{1+t^2}}{st}\,d\rho_{n}^{U}.\end{equation} The conditions \eqref{eq:4.15} imply that the measure \begin{equation}\label{eq:4.19}\tau=\frac{1+s^2}{s^2}\,d\rho_{1n}^{U}=\frac{1+t^2}{t^2}\,d\rho_{2n}^{U},\end{equation} and hence it is in fact a positive measure on the set $U_{n+1}$. If $\lambda=\tau(U_{n+1})=0$, then \eqref{eq:4.18} and \eqref{eq:4.19} imply that the measures $\rho_{1n}^{U}$, $\rho_{2n}^{U}$, and $\rho_{n}^{U}$ are all equal to the zero measure. So, the function $R_n$ in this case is constantly zero, and hence it is the infinitely divisible $R$-transform corresponding to the point mass at $(0,0)$. Therefore, we assume $\lambda>0$ in the sequel and normalize the measure $\tau$ to get the probability law \[\mu=\tau/\lambda.\]

Finally, define the constants \[a=-\lambda\int_{U_{n+1}}\frac{s}{1+s^2}\, d\mu(s,t)\quad \text{and} \quad b=-\lambda\int_{U_{n+1}}\frac{t}{1+t^2}\, d\mu(s,t).\] We now combine \eqref{eq:4.17}, \eqref{eq:4.18}, and \eqref{eq:4.19} with the identity \[\frac{z+x}{1-zx}\frac{x^2}{1+x^2}=\frac{x}{1-zx}-\frac{x}{1+x^2},\quad z \in \mathbb{C} \setminus \mathbb{R},\,\, x\in \mathbb{R},\]to get \[R_{n}(z,w)=az+bw-\lambda+\lambda\int_{U_{n+1}}\frac{1}{(1-zs)(1-wt)}\, d\mu(s,t).
\] This proves that $R_n$ is the $R$-transform of the bi-free convolution \[\delta_{(a,b)}\boxplus\boxplus \nu_{\lambda,\mu},\] which is indeed infinitely divisible. The proof is now completed.
\end{proof}
The attentive reader may notice that the integral representation
\eqref{eq:4.16} could have been derived directly from the general
limit theorems in Section 3 through a discretization process of the
$\boxplus\boxplus$-semigroup $\{\nu_t\}_{t\geq 0}$. However, the
approach undertaken here has the advantage that not only does it
reveal how the conditions \eqref{eq:4.15} arise naturally from the
dynamical view of $R$-transform (hence justifying the name
"L\'{e}vy-Khintchine formula"), but it also demonstrates that at any
time $t$, the process $\nu_{t}$ can be realized as
\[\nu_t=\text{Gaussian}\boxplus\boxplus\text{1-D infinitely
divisible product}\boxplus\boxplus\text{Poisson limit},\] which
resembles its Fock space model in the bounded support case.

Finally, we remark that the integral formulas \eqref{eq:4.4} and \eqref{eq:4.16} are equivalent when the infinitely divisible law $\nu$ is compactly supported. This follows from an easy substitution: $d\rho^{\prime}_{1}=(1+s^{2})\, d\rho_{1}$,
$d\rho^{\prime}_{2}=(1+t^{2})\, d\rho_{2}$, $d\rho^{\prime}=\sqrt{1+s^{2}}\sqrt{1+t^{2}}\, d\rho$, and
\[
a_{1}=\gamma_{1}+\int_{\mathbb{R}^{2}}s\, d\rho_{1}(s,t),\quad a_{2}=\gamma_{2}+\int_{\mathbb{R}^{2}}t\, d\rho_{2}(s,t),
\] which turns the integral form \eqref{eq:4.16} into
\begin{eqnarray*}
R_{\nu}(z,w) & = & a_{1}z+a_{2}w+\int_{\mathbb{R}^{2}}\frac{z^{2}}{1-zs}\, d\rho_{1}^{\prime}(s,t)+\int_{\mathbb{R}^{2}}\frac{w^{2}}{1-wt}\, d\rho_{2}^{\prime}(s,t)\\
 &  & +\int_{\mathbb{R}^{2}}\frac{zw}{(1-zs)(1-wt)}\, d\rho^{\prime}(s,t),
\end{eqnarray*} 
and the system \eqref{eq:4.15} now becomes 
\[
\begin{cases}
t\, d\rho_{1}^{\prime}=s\, d\rho^{\prime};\\
s\, d\rho_{2}^{\prime}=t\, d\rho^{\prime};\\
|\rho^{\prime}(\{(0,0)\})|^2 \leq \rho_{1}^{\prime}(\{(0,0)\})\rho_{2}^{\prime}(\{(0,0)\}).
\end{cases}\]

\end{document}